\newcommand{\cmark}{\ding{51}}%
\newcommand{\xmark}{\ding{55}}%
\newcommand{\add}[1]{{\color{black}#1}}
\theoremstyle{plain}
\newtheorem{theorem}{Theorem}[section]
\newtheorem{lemma}[theorem]{Lemma}
\newtheorem{corollary}[theorem]{Corollary}
\theoremstyle{definition}
\newtheorem{assumption}[theorem]{Assumption}
\theoremstyle{remark}
\newtheorem{remark}[theorem]{Remark}
\definecolor{bgcolor}{rgb}{0.8,1,1}
\definecolor{bgcolor2}{rgb}{0.8,1,0.8}
\newcommand{\myred}[1]{{\color{red}#1}}
\newcommand{\kappaf}{\kappa}
\newcommand{\DFP}{\text{DFP}}
\newcommand{\Broyd}{\text{Broyd}}
\newcommand{\BFGS}{\text{BFGS}}
\newcommand{\CRN}{\text{CRN }}
\newcommand{\QN}{\text{QN }}
\newcommand{\B}{\widetilde{B}}
\newcommand{\eqdef}{:=}
\def\bk{\bar \kappa}
\author{%
    Dmitry Kamzolov\\
    MBZUAI\thanks{Mohamed bin Zayed University of Artificial Intelligence, United Arab Emirates}\\
    \texttt{kamzolov.opt@gmail.com}
    \And
    Klea Ziu\\
    MBZUAI\footnotemark[1]\\
    \texttt{klea.ziu@mbzuai.ac.ae}
    \And
    Artem Agafonov\\
    MBZUAI\footnotemark[1]\\
    \texttt{kamzolov.opt@gmail.com}
    \And
    Martin Tak\'a\v{c}\\
    MBZUAI\footnotemark[1]\\
    \texttt{takac.MT@gmail.com}
    }
\title{Cubic Regularization is the Key! The First Accelerated Quasi-Newton Method with a Global Convergence Rate of $O(k^{-2})$ for Convex Functions.}
\begin{document}
\maketitle
\begin{abstract} %
In this paper, we propose the first Quasi-Newton method with a global convergence rate of $O(k^{-1})$ for general convex functions. Quasi-Newton methods, such as BFGS, SR-1, are well-known for their impressive practical performance. However, they may be slower than gradient descent for general convex functions, with the best theoretical rate of $O(k^{-1/3})$. This gap between impressive practical performance and poor theoretical guarantees was an open question for a long period of time. In this paper, we make a significant step to close this gap. We improve upon the existing rate and propose the Cubic Regularized Quasi-Newton Method with a convergence rate of $O(k^{-1})$. The key to achieving this improvement is to use the Cubic Regularized Newton Method over the Damped Newton Method as an outer method, where the Quasi-Newton update is an inexact Hessian approximation. Using this approach, we propose the first Accelerated Quasi-Newton method with a global convergence rate of $O(k^{-2})$ for general convex functions. In special cases where we can improve the precision of the approximation, we achieve a global convergence rate of $O(k^{-3})$, which is faster than any first-order method. To make these methods practical, we introduce the Adaptive Inexact Cubic Regularized Newton Method and its accelerated version, which provide real-time control of the approximation error. We show that the proposed methods have impressive practical performance and outperform both first and second-order methods.
\end{abstract}

\section{Introduction}
In this paper\footnote{The first version of the paper, titled "Accelerated Adaptive Cubic Regularized Quasi-Newton Methods," was made available online on February 10th.}, we consider the following optimization problem
\begin{equation}
\label{eq:problem}
    \min_{x\in \R^d} f(x).
\end{equation}
We assume that function $f(x)$ is convex and has $L_1$-Lipschitz-continuous gradient and $L_2$-Lipschitz-continuous Hessian. 
\begin{assumption}
Let $x^{\ast}$ be a minimizer of the function $f$. The function $f\in C^2$ is \textbf{convex}  if $\forall x \in \R^d$
\[
    \nabla^2 f(x) \succeq 0.
\]
\end{assumption}
\begin{assumption}
The  function $f(x)\in C$ has $L_1$-Lipschitz-continuous gradient if for any $x,y \in \R^d$
\begin{equation*}
\label{eq:L1}
    \|\nabla f(x) - \nabla f(y) \|_{\ast} \leq L_1\|x-y\|.
\end{equation*}
\end{assumption}
\begin{assumption}
The function $f(x)\in C^2$ has $L_2$-Lipschitz-continuous Hessian if for any $x,y \in \R^d$
\begin{equation*}
\label{eq:L2}
    \|\nabla^2 f(x) - \nabla^2 f(y) \| \leq L_2\|x-y\|.
\end{equation*}
\end{assumption}
Note, these assumptions are the most standard assumptions for the first and second-order methods. For $\mu$-strongly convex function, we denote the condition number $\kappa= \tfrac{L_1}{\mu}$. %FULL
\\
Second-order methods play a significant role in modern optimization and have roots in the classical works of \citet{Newton}, \citet{Raphson}, and \citet{Simpson}. Over the years, they have been studied in depth, modified and improved in works of \cite{kantorovich1948functional, more1978levenberg, Griewank-cubic-1981,nesterov2006cubic}, and are now widely used in industrial and scientific computing. These methods typically achieve faster convergence than first-order algorithms, but at the same time, the per-iteration cost of second-order methods is significantly higher. For instance, the classical Newton method can be written as:
\begin{equation}
    \label{eq:classical_Newton}
    x_{t+1} = x_t - \left[\nabla^2 f(x_t) \right]^{-1} \nabla f(x_t)
\end{equation}
It has a quadratic local convergence, but each iteration requires computation of the full Hessian and matrix inversion, which is impractical for large-scale optimization problems.
    \subsection{Quasi-Newton Methods}
In order to find a balance between computational cost of iteration and the fast convergence of second-order methods, different variants of Quasi-Newton(QN) methods have been proposed in papers of~\cite{broyden1967quasi, fletcher1970new,goldfarb1970family,   wright1999numerical, shanno1970conditioning,byrd1996analysis,khalfan1993theoretical, conn1991convergence,berahas2016multi}. 
\begin{equation*}
    \label{eq:classical_Quasi-Newton}
    x_{t+1} = x_t - \left[B_t \right]^{-1} \nabla f(x_t) = x_t - H_t \nabla f(x_t)
\end{equation*}
These methods approximate the Hessian matrix and its inverse, denoted by $B_t$ and $H_t$, respectively, based on first-order (gradient) information from previous iterations. We refer to this approach as \textit{\QN with history}. Another variant, known as \textit{\QN with sampling}, approximates the Hessian along directions of Hessian vector products, as described by \cite{berahas2021quasi}. Note that \QN with history can be combined with \QN with sampling.
\\
\QN methods have cheaper iteration costs compared to second-order methods with full information, but they also have local superlinear rates that are faster than the linear rates of first-order methods. \QN methods have become modern optimization classics due to their effectiveness and practicality. Usually, the accuracy of the Hessian approximation improves with iterations. Theoretical results on the accuracy of approximation are available for certain algorithms such as BFGS, SR1, and DFP by~\cite{fletcher2013practical, nocedal1999numerical, dennis1977quasi}. It is well known that \QN methods achieve an asymptotic superlinear convergence rate in the local neighborhood of an optimal solution, but explicit convergence rate was not established until recent works of ~\cite{rodomanov2021rates, rodomanov2021greedy, rodomanov2021new, lin2021greedy}, which started a new wave of \QN methods. However, for a long period of time, fast \textit{global convergence} of \QN methods was an open problem. To the best of our knowledge, the best QN method convergence for strongly convex functions is $O\ls\kappaf^3\log(\varepsilon^{-1})\rs$ by \cite{meng2020fast,berahas2021quasi}, while for convex functions, there are even fewer results. It means that QN methods may be slower than gradient descent. The main goal of our paper is to close this gap and to propose \QN method that will be not slower than gradient descent. In Table \ref{tab1}, we present the state-of-the-art (SOTA) QN methods and their convergence rates and our proposed methods and their convergence rates. We want to highlight the following points and make some remarks:
\begin{itemize}[leftmargin=10pt,noitemsep,topsep=0pt] 
    \item The SOTA QN methods have slower global convergence rates than gradient descent for general convex functions. 
    \item The SOTA QN methods are only matching global convergence rates of gradient descent specifically for quadratic functions, where the Hessian matrix is a constant matrix and doesn't change through iterations.
    \item We propose Cubic QN method with history, which is the first QN method that matches global convergence rates of gradient descent for general convex functions. We also propose Cubic QN method with sampling, which is faster than gradient descent and matches convergence rates for Cubic Regularized Newton Method by \cite{nesterov2006cubic}.
    \item So far, Accelerated QN methods did not exist. In our paper, we propose the first Accelerated QN method. Accelerated QN method with history matches convergence rates of Accelerated Gradient Method(AGM) for convex function. We also propose Accelerated Cubic QN method with sampling, which is faster than AGM and matches convergence rates for Accelerated Cubic Regularized Newton Method by \cite{nesterov2008accelerating}.  
    \item One can obtain convergence rates for convex functions from convergence rate for strongly convex functions by using the regularization technique: $\min_x \lb f(x) + \tfrac{\varepsilon}{4 R^2}\|x\|^2\rb$. It means that if a QN method has the convergence rate $O\ls\kappaf^{3}\log(\varepsilon^{-1})\rs$ for strongly convex function, it also has the convergence rate $O(1) (\tfrac{L_1R^2}{\varepsilon})^{3}$ for convex functions.
    \item First-order lower bounds may be not applicable for QN methods, especially for QN with sampling. It is still an open problem to formulate such lower bounds. 
\end{itemize}

\begin{table}[h]
\centering
        \scriptsize
    \caption{Comparison of first-order methods and QN methods for convex and strongly-convex problems. {\em Notation:} $\varepsilon$ denotes solution accuracy for \eqref{eq:problem}. The `Assumption' column shows whether the function $f(x)$ is convex, strongly convex or quadratic. The `Convergence' column shows how many iterations (up to a constant factor) are enough to do to guarantee that $f(x_t)-f(x^{\ast})\leq \varepsilon$. Smaller power of $\varepsilon$ means faster convergence. }
    \label{tab1}
  {
    \begin{tabular}{|c|c|c|c|}
    \hline
    \textbf{Method} & \textbf{Reference} & \textbf{Assumption} & \textbf{Convergence}  
    \\ \hline
    \multirow{2}{*}{\texttt{Gradient Descent}}    
     & \multirow{2}{*}{\cite{nesterov2018lectures}} 
   &  convex function   & 
   $\tfrac{L_1R^2}{\varepsilon}$
    \\
    \cline{3-4} 
    & &
   $\mu$-strongly convex function
    & 
   {$\kappaf\log(\varepsilon^{-1})$}
    \\ \hline
    \multirow{2}{*}{\begin{tabular}{@{}c@{}}\texttt{Accelerated}  \\  \texttt{Gradient Method}
   \end{tabular}}      
     & \multirow{2}{*}{\cite{nesterov2018lectures}} 
   &  convex function   & 
   $\ls\tfrac{L_1R^2}{\varepsilon}\rs^{1/2}$
    \\
    \cline{3-4} 
    & &
   $\mu$-strongly convex function
    & 
   {$\kappaf^{1/2}\log(\varepsilon^{-1})$}
    \\ \hline
   \texttt{BFGS with history} & \cite{meng2020fast}
   & $\mu$-strongly convex function & $\kappaf^3\log(\varepsilon^{-1})$ 
    \\ \hline
    \texttt{BFGS with sampling} & 
   \cite{berahas2021quasi}
   & $\mu$-strongly convex function & $\kappaf^3\log(\varepsilon^{-1})$ 
    \\ \hline
    Quasi-Newton methods (QN) &  \begin{tabular}{@{}c@{}} \tiny{\cite{rodomanov2021greedy}} \\  \tiny{\cite{rodomanov2021rates}} \end{tabular}
   & \begin{tabular}{@{}c@{}} quadratic function \\  $f(x) = \frac{1}{2}x^{\top}Ax - b^{\top}x$ \end{tabular} & $\kappaf\log(\varepsilon^{-1})$
    \\ \hline 
    \multirow{2}{*}{\begin{tabular}{@{}c@{}}\texttt{First-order}  \\  \texttt{lower bound}
   \end{tabular}}      
     & \multirow{2}{*}{\cite{nesterov2018lectures}} 
   &  convex function   & 
   $\ls\tfrac{L_1R^2}{\varepsilon}\rs^{1/2}$
    \\
    \cline{3-4} 
    & &
   $\mu$-strongly convex function
    & 
   {$\kappaf^{1/2}\log(\varepsilon^{-1})$}
    \\ \hline
    \hline
    \cellcolor{bgcolor2}{
    \begin{tabular}{@{}c@{}} \texttt{Cubic L-BFGS} \\  \texttt{with history} \end{tabular}} & \cellcolor{bgcolor2}{This work} & \cellcolor{bgcolor2}{convex function}& \cellcolor{bgcolor2}{$\tfrac{L_1R^2}{\varepsilon}$} 
    \\ \hline
    \cellcolor{bgcolor2}{
    \begin{tabular}{@{}c@{}} \texttt{Cubic L-BFGS} \\  \texttt{with sampling} \end{tabular}} & \cellcolor{bgcolor2}{This work} & \cellcolor{bgcolor2}{convex function}& \cellcolor{bgcolor2}{$\ls\tfrac{L_2R^3}{\varepsilon}\rs^{1/2}$} 
    \\ \hline
    \cellcolor{bgcolor2}{
    \begin{tabular}{@{}c@{}} \texttt{Accelerated} \\ \texttt{Cubic L-BFGS} \\  \texttt{with history} \end{tabular}} & \cellcolor{bgcolor2}{This work} & \cellcolor{bgcolor2}{convex function}& \cellcolor{bgcolor2}{$\ls\tfrac{L_1R^2}{\varepsilon}\rs^{1/2}$} 
    \\ \hline
    \cellcolor{bgcolor2}{
    \begin{tabular}{@{}c@{}} \texttt{Accelerated} \\ \texttt{Cubic L-BFGS} \\  \texttt{with sampling} \end{tabular}} & \cellcolor{bgcolor2}{This work} & \cellcolor{bgcolor2}{convex function}& \cellcolor{bgcolor2}{$\ls\tfrac{L_2R^3}{\varepsilon}\rs^{1/3}$} 
    \\ \hline
    \end{tabular} 
    }
\end{table}
    \subsection{Modern second-order methods or how to improve Quasi-Newton (QN) methods}
    In this subsection, we discuss the intuition and reasons: why classical \QN methods may be slower than Gradient Descent and what is stopping us from getting good global convergence rates for \QN methods. From our perspective, \textbf{there are three main theoretical difficulties with \QN methods}: 
    \textit{\textbf{1)} The convergence of an outer (exact) method; \textbf{2)} The changing Hessian matrix between steps; \textbf{3)} The quality of the Hessian approximation.}
    \\
    Let us briefly address points $2$ and $3$ before moving on to the main point $1$. \cite{rodomanov2021rates, rodomanov2021greedy, rodomanov2021new} have made a breakthrough in \QN methods by explicitly analyzing the changing Hessian matrix and the quality of Hessian approximation. In the sections dedicated to convex functions, they proposed a pretty strict condition, such as strongly self-concordance for the function $f$, to track and bound the difference in the Hessian matrix between steps. However, even with such conditions, it was only possible to prove local superlinear convergence. In the sections dedicated to quadratic problems, it was shown that even when the Hessian matrix is constant, it is still challenging to approximate it using \QN updates. As a result, for quadratic functions, \QN methods have the same convergence rate as a Gradient Descent or solve the problem exactly with $t=d$ steps as Conjugate Gradient Descent, where $d$ is the dimension of $x\in \R^d$. These two problems are outside the scope of our paper, as well as local superlinear convergence. We hope that these problems can be addressed in future work. Now we move on to point $1$: "The convergence of an outer(exact) method", which we aim to solve in our paper. By the outer method, we mean the method for which we replace the exact Hessian matrix $\nabla^2 f(x_t)$ by the inexact \QN approximation $B_t$. The outer method used in most \QN methods is a Damped Newton method, although there are some variants where \QN updates are combined with Trust-Region Newton methods. In the next paragraph, we discuss why these methods may be problematic from a theoretical perspective.
    \\
    Despite the widespread use of second-order algorithms with quadratic local convergence rates, fast global convergence guarantees are not common for Newton-type methods. The reason is that a Newton step \eqref{eq:classical_Newton} is a minimizer of the second-order Taylor approximation
    \begin{equation*}
    x_{k+1} = \argmin_{x} \large\{  \Phi_{x_k}(x) \eqdef f(x_k) + \langle \nabla f(x_k), x - x_k \rangle   
    + \tfrac{1}{2}\langle \nabla^2 f(x_k)(x-x_k), x-x_k\rangle \large\},
    \end{equation*}
    which is not an upper bound on the objective function $f(x)$. This is one of the reasons, why the classical Newton method may even diverge from points that are far from the solution, as shown in Example 1.2.3 in \cite{nesterov2018lectures}.
    Different strategies such as line-search, trust regions, and damping/truncation are used for globalization~\cite{more1978levenberg,conn2000trust,nocedal1999numerical,martens2010deep}. However, to the best of our knowledge, these methods converge at a rate of $O(\varepsilon^{-2})$, which makes them slower than Gradient Descent. In \QN methods, we replace the exact Hessian matrix by an inexact approximation, which means we are losing information and slowing down the convergence. The solution to this problem is to use the Cubic Regularized Newton(CRN) method as an outer method by \cite{nesterov2006cubic}. The \CRN method converges globally with a convergence rate $O(\varepsilon^{-1/2})$, which is faster than Gradient Descent. Therefore, even with the loss of information from \QN approximation, the Cubic \QN can still converge globally and quickly enough. To demonstrate the practical performance difference, we present Figure \ref{fig:a9a_example}, where it can be seen that \CRN method is much faster compared to the Damped Newton method. Theoretical and practical results on the Cubic \QN methods are presented in the next sections and Appendix.

    \begin{wrapfigure}{r}{0.45\textwidth}
  \includegraphics[width=1\linewidth]{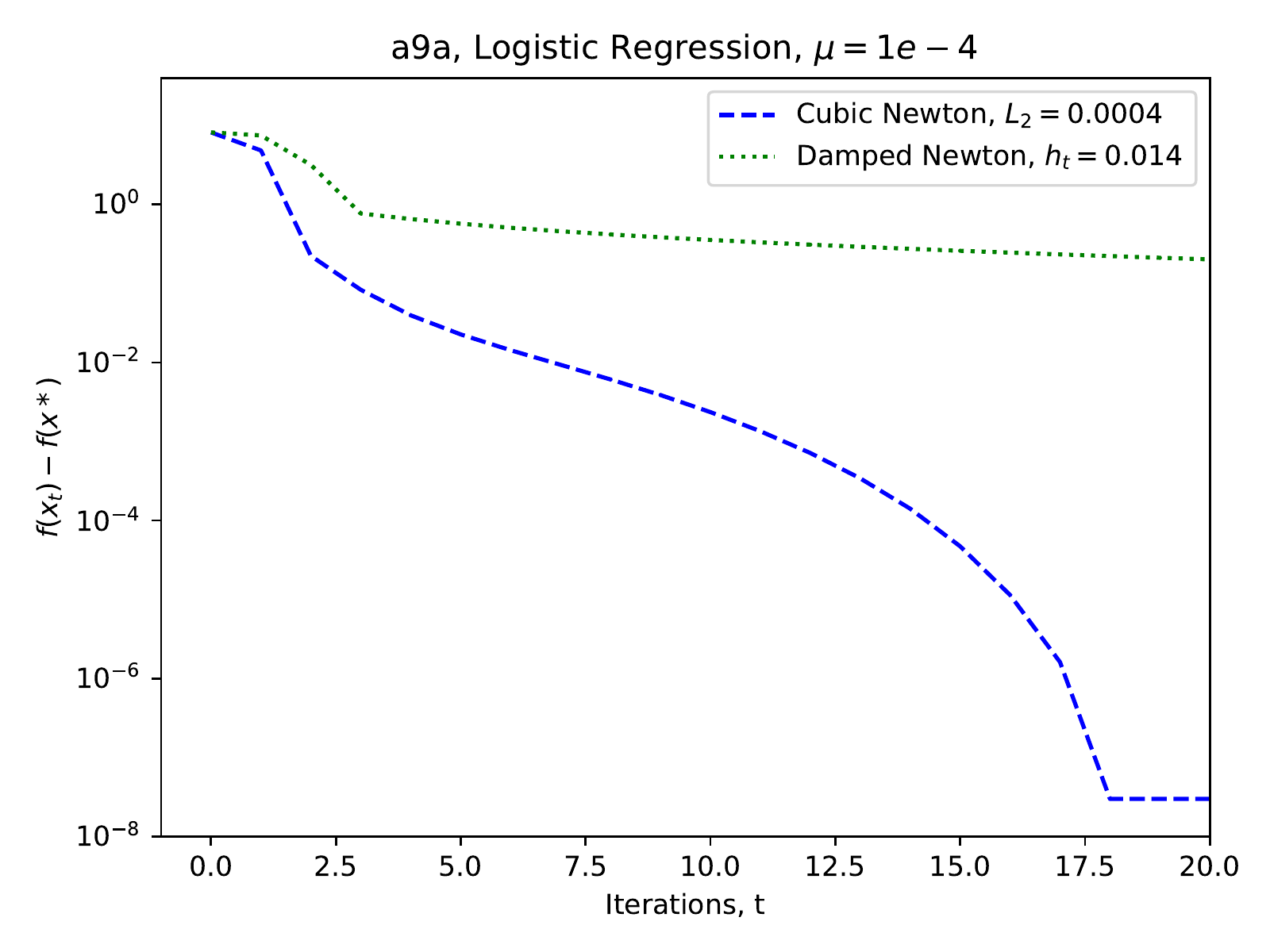}
    \caption{Comparison of \CRN method and Damped Newton method on regularized Logistic Regression for \texttt{a9a} dataset using the best tuned parameters from the starting point $x_0= 3*e$, where the regularizer $\mu = 1e-4$ and $e$ is a vector of all ones.}
    \label{fig:a9a_example}
    \end{wrapfigure}
    \subsection{Related literature}   
    \paragraph{Globalization via Cubic Regularization.}
    The Cubic Regularized Newton(CRN) method by \cite{nesterov2006cubic} is one of the main approaches to globalize the Newton method. The \CRN update has the following form
    \begin{equation}
        \label{eq:cubic_step_intro}
        \min \limits_{x}  \lb \Phi_{x_k}(x) + \tfrac{M}{6}\|x - x_k\|^3\rb.
    \end{equation}
    By choosing the regularization parameter $M$ greater than the Lipschitz-continuous Hessian parameter $L_2$, the cubic regularized Taylor approximation term majorizes the objective function $f(x)$.
    Therefore, this algorithm achieves global convergence with the convergence rate $O(\varepsilon^{-1/2})$. Various acceleration techniques can be applied, such as Nesterov acceleration with rate $O(\varepsilon^{-1/3})$ by \cite{nesterov2008accelerating}, near-optimal accelerations up to a logarithmic factor $\tilde{O}(\varepsilon^{-2/7})$ by \cite{monteiro2013accelerated, gasnikov2019near}, and optimal acceleration $O(\varepsilon^{-2/7})$ by \cite{kovalev2022first,carmon2022optimal}. Under higher smoothness assumption, superfast second-order methods can accelerate even more, up to the rate $O(\varepsilon^{-1/4})$ with Nesterov acceleration by \cite{nesterov2021superfast}, and  up to $O(\varepsilon^{-1/5})$ with near-optimal acceleration by \cite{nesterov2021inexact, kamzolov2020near}.
    The \CRN method allows for inexact Hessian approximations by \cite{ghadimi2017second} or stochastic Hessians by \cite{agafonov2020inexact, antonakopoulos2022extra}, making it applicable to distributed optimization by \cite{zhang2015disco,daneshmand2021newton, agafonov2021accelerated}. Moreover, all the results mentioned above about the \CRN method are also generalizable to higher-order (tensor) methods by \cite{nesterov2021implementable, nesterov2019inexact, grapiglia2021inexact,  agafonov2020inexact, doikov2020local, doikov2020dynamic, dvurechensky2021hyperfast, kamzolov2022exploiting}. 
    However, the main drawback of the \CRN method is the auxiliary problem~\eqref{eq:cubic_step_intro}, which typically requires running a separate optimization algorithm to solve it.
    Several papers have introduced regularization by the norm of the gradient to obtain an explicit regularized Newton step by \cite{polyak2009regularized, polyak2017complexity}. \cite{mishchenko2021regularized, doikov2021gradient} improved its convergence rate up to $O(\varepsilon^{-1/2})$, under higher assumptions on smoothness it accelerates up to $O(\varepsilon^{-1/3})$ by \cite{doikov2022super}.  Affine-Invariant \CRN method with local Hessian norms has the convergence rate $O(\varepsilon^{-1/2})$ and the same subproblem as a classical Newton step by \cite{hanzely2022newton}. Inexact \CRN methods are also well developed for nonconvex problems by \cite{cartis2011adaptive, cartis2011adaptive2, bellavia2019adaptive, lucchi2019stochastic, xu2020newton, doikov2022second}.
    \\
    {\bf Hessian approximation methods.}
    Another approach to reducing high iteration complexity involves the use of Newton-type methods operating in random low-dimensional subspaces by \cite{SDNA, RSN, RBCN, SSCN} and subsampled algorithms for empirical risk minimization by~\cite{NewtonSketch, xu2020newton, SN2019}. Stochastic \QN methods were also developed by \cite{Bordes2009a,OnlineBFGS,SBFGS,SQN2016,RBFGS2020}. In the context of distributed optimization, Hessian approximations are used not only to reduce the complexity of iteration but also the complexity of communication. These methods typically utilize compression and sketching techniques by \cite{NL2021, FedNL, BL2022, Newton-3PC, agafonov2022flecs, agafonov2022flecscgd}.

     \paragraph{Local Newton-type methods.}
     Another main research direction of second-order methods is local Newton methods for self-concordant problems~\cite{nesterov1994interior, nesterov2018lectures}. These methods have led to the development of interior-point methods, which have been a significant advancement in optimization and have been widely used in various fields such as engineering, finance, and machine learning \cite{alizadeh1995interior, rao1998application, koh2007interior, bertocchi2020deep}. This area of research is continuing to evolve currently  \cite{dvurechensky2018global, hildebrand2020optimal, doikov2022affine, nesterov2022set}.

\subsection{Contribution}
\label{sec:contribution}
In this paper, we present a novel approach to Quasi-Newton methods by integrating a Cubic Regularization technique. Our method utilizes the \QN approximation as an inexact Hessian within the \CRN algorithm. Furthermore, we highlight the main contributions of this paper.

\begin{itemize}[leftmargin=10pt] 
\vspace{-0.3 cm}
    \item \textbf{First Cubic Regularized Quasi-Newton method for convex optimization.} 
    We propose a Cubic Regularized \QN method that achieves global convergence rate $O(1) \max\lb \tfrac{L_1R^2}{\varepsilon}, \sqrt{\tfrac{MR^3}{\varepsilon}}\rb$ iterations for an appropriate choice of \QN Hessian approximation. It is the first \QN method that matches the global convergence of gradient descent.
\vspace{-0.1 cm}

    \item \textbf{First Accelerated Cubic Regularized Quasi-Newton method for convex optimization.}
    We introduce the first Accelerated \QN method with global convergence rate $O(1) \max\lb \ls \tfrac{\delta R^2}{\varepsilon}\rs^{1/2};  \ls\tfrac{MR^3}{\varepsilon}\rs^{1/3}\rb$ iterations. It is the first Accelerated \QN method that matches the global convergence of accelerated gradient descent.
    \vspace{-0.1 cm}
    
    \item \textbf{Novel concept of Hessian inexactness and corresponding convergence analysis.}
    We introduce the concept of directional $\delta_x^{y}$-inexact Hessian. This concept of inexactness depends only on the projection of Hessian and its approximation along the direction $y-x$. We show that several \QN algorithms satisfy these conditions. For simplicity, assuming that the error is fixed and equal $\delta$, we get the convergence rate $ O(1) \max\lb \ls \tfrac{L_1R^2}{\varepsilon}\rs^{1/2};  \ls\tfrac{MR^3}{\varepsilon}\rs^{1/3}\rb$ iterations for accelerated method and $O(1) \max\lb \tfrac{\delta R^2}{\varepsilon};  \sqrt{\tfrac{MR^3}{\varepsilon}}\rb$ iterations for non-accelerated one.
    \vspace{-0.1 cm}
    
    \item \textbf{Adaptivity.} We propose an adaptive variant of the (Accelerated) Inexact Cubic (Quasi) Newton method. In cases where inexactness is uncontrollable, the proposed algorithms adjust to the error. Additionally, adaptivity gives an understanding of inexactness level, so in the cases when it is possible to control the error adaptive methods allows us to achieve global $O(1) \ls\tfrac{MR^3}{\varepsilon}\rs^{1/3}$ convergence rate. For example, in the empirical risk minimization problem or stochastic optimization, if $\delta$ becomes too large, we can increase the batch size to reduce it \footnote{To the best of our knowledge, in stochastic optimization it is necessary to have Hessian batch size at least $\Tilde{O}(\e^{-2/3})$ to guarantee convergence in $O(\e^{-1/3})$ iterations from the paper of \cite{agafonov2020inexact}. That sample size is purely theoretical, and in practice it can be much smaller. Adaptive inexactness allows us to find the true $\delta$, which leads to smaller batch sizes.}.
\vspace{-0.1 cm}

    \item \textbf{Fast subproblem solution.} 
    We propose the way of solving Cubic Regualized Quasi-Newton method's subproblem via Woodbury matrix identity by~\cite{woodbury1949stability, woodbury1950inverting} and line-search with $O(m^2d + m^2\log \e^{-1})$ complexity, where $m$ is a user-defined memory size. Note, that the corresponding complexity of Cubic Newton is $O(d^3 + d\log \e^{-1})$.
    \vspace{-0.1 cm}
    
    \item \textbf{Competitive Numerical Experiments.} Our experiments show that Cubic \QN methods outperform \QN algorithms such as L-SR1 and L-BFGS in terms of iterations and gradient/Hessian-vector product computations. Additionally, the proposed Cubic \QN methods surpass the exact Cubic Newton method in terms of gradient/Hessian-vector product computations.
\end{itemize}

    \subsection{Organization}
    The rest of the paper is organized as follows. In Section \ref{sec:inexact_cubic}, we introduce a new improved assumption on Hessian inexactness and Adaptive Inexact \CRN method. In the next Section \ref{sec:accelerated_cubic}, we propose Adaptive Accelerated Inexact \CRN method.
    The Section \ref{sec:Quasi-Newton} is dedicated to the various Quasi-Newton appoximations and how to solve the cubic subproblem with \QN approximations. Finally, numerical experiments are provided in Section \ref{sec:experiments}. All proofs, additional experiments, alternative version of Inexact \CRN for wider class of problems can be found in Appendix.

\section{Adaptive Inexact Cubic Regularized Newton method for convex functions}
\label{sec:inexact_cubic}
In this section, we introduce a new method called Adaptive Inexact \CRN method. It serves as the main upper-level method in our approach, offering fast convergence and control over the inexactness of inner information by adaptive nature of the method. The method draws inspiration from the paper by \cite{ghadimi2017second}, as well as its generalization from the paper by \cite{agafonov2020inexact}. The proof technique is also influenced by \cite{nesterov2019inexact, nesterov2021superfast}. For this section, we assume that the function $f(x)$ is convex and has $L_2$-Lipschitz-continuous Hessian.
\\
Let us now introduce a generalized assumption on inexact Hessian approximation. specifically the Hessian inexactness along the given (step) direction.  
\begin{assumption}
\label{ass:direct_inexactness}
For a function $f(x)$ and points $x \in \mathbb{R}^d$ and $y \in \mathbb{R}^d$, a positive semidefinite matrix $B_x \in \mathbb{R}^{d \times d}$ is considered a $\delta_{x}^{y}$-inexact Hessian if it satisfies the inequality
\begin{equation}
\label{eq:delta_norm}
    \|(\nabla^2 f(x) - B_x)(y-x)\| \leq \delta_{x}^{y}\|y-x\|.
\end{equation}
\end{assumption}
It is worth noting that this assumption focuses solely on the inexactness along the direction $y-x$, which represents the step direction. Consequently, the error can be much smaller than $\|\nabla^2 f(x) - B_x\|_2$ from previous papers. More detailed information and comparisons regarding existing types of inexactness are provided in the Appendix.
\\
Now, we can proceed to the formulation of the method. Firstly, we introduce \textit{the exact Taylor approximation} defined as:
\begin{equation*}
    \label{eq:exact_taylor}
    \Phi_{x}(y) \stackrel{\text{def}}{=} f(x) + \la \nabla f(x), y-x\ra + \tfrac{1}{2} \la \nabla^2 f(x) (y-x), y-x \ra,
\end{equation*}
and \textit{the inexact Taylor approximation} as:
\begin{equation*}
    \label{eq:inexact_taylor}
    \phi_{x}(y) \stackrel{\text{def}}{=} f(x) + \la \nabla f(x), y-x\ra + \tfrac{1}{2} \la B_x (y-x), y-x \ra,
\end{equation*}

Next, we introduce the Inexact \CRN operator
\begin{equation}
    \label{eq:inexact_cubic_operator}
    S_{M, \delta}(x) = x + \textstyle{\argmin}_{ h \in \R^d} \left\lbrace f(x)+ \la\nabla f(x),h\ra   + \tfrac{1}{2}\la B_{x} h,h\ra +
    \tfrac{M}{6}\|h\|^3 + \tfrac{\delta}{2}\|h\|^2\right\rbrace,
\end{equation}
where $M\geq 2L_2$. Then, the step of the method is given by $x_{t+1} = S_{M,\delta}(x_{t})$. To exploit its best properties, we propose an adaptive version of the method that adapts to the value of $\delta$.

\begin{algorithm} 
		\caption{Adaptive Inexact Cubic Regularized Newton method} 
        \label{alg:adaptive_cubic}
		\begin{algorithmic}[1]
			\STATE \textbf{Requires:} Initial point $x_0 \in \R^d$, constant $M$ s.t. $M \geq L>0$, initial inexactness $\delta_0$, increase multiplier $\gamma_{inc}$.
			\FOR {$t=0,1,\ldots, T$}
                \STATE $x_{t+1} = S_{M,\delta_t}(x_{t})$
                \WHILE{$\langle \nabla f(x_{t+1}), v_t - x_{t+1} \rangle 
        \leq 
        \min \left\{ \tfrac{\|\nabla f(x_{t+1})\|_{\ast}^2}{4\delta_t }, 
         \tfrac{\|\nabla f(x_{t+1})\|_{\ast}^\frac{3}{2}}{(3M)^\frac{1}{2}}\right\}$}
			     \STATE $\delta_t= \delta_t\gamma_{inc}$
			     \STATE $x_{t+1} = S_{M,\delta_t}(v_{t})$
                \ENDWHILE
			\ENDFOR
            \STATE \textbf{Return} $x_{T+1}$
		\end{algorithmic}
	\end{algorithm}

Finally, we present the convergence theorem for convex functions.

\begin{theorem}
\label{thm:cubic_convergence}
Let $f(x)$ be a convex function with respect to the global minimizer $x^{\ast}$, and let $f(x)$ have an $L_2$-Lipschitz-continuous Hessian. Suppose $B_{t}$ is a $(\delta_x^y)$-inexact Hessian, and $M\geq 2L_2$. The Adaptive Inexact \CRN method performs $T\geq 1$ iterations to find an $\varepsilon$-solution $x_T$ such that $f(x_{T}) - f(x^{\ast})\leq \varepsilon$. The value of $T$ is bounded by: 
\begin{equation*}
    T = O(1) \max\lb \tfrac{\delta_{T}R^2}{\varepsilon};  \sqrt{\tfrac{MR^3}{\varepsilon}}\rb,
\end{equation*}
where $R~=~\max\limits_{x \in \mathcal{L}} \|x - x^{\ast}\|$ represents the diameter of the level set $\mathcal{L}~=~\lb x \in \R^d : f(x) \leq f(x_0) \rb$.
\end{theorem}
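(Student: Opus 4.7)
The plan is to combine the first-order optimality condition of the inexact cubic subproblem with the directional inexactness Assumption~\ref{ass:direct_inexactness} to convert the adaptive exit criterion into a functional descent inequality, and then to solve the resulting two-branch recurrence in $A_t := f(x_t)-f(x^{\ast})$.

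First, I would differentiate the subproblem defining $S_{M,\delta_t}$ in \eqref{eq:inexact_cubic_operator} to obtain the stationarity condition
\[
\nabla f(x_t) + B_t h_t + \tfrac{M}{2}\|h_t\|\, h_t + \delta_t h_t = 0, \qquad h_t := x_{t+1}-x_t.
\]
Combining this identity with the standard Lipschitz-Hessian residual bound $\|\nabla f(x_{t+1}) - \nabla f(x_t) - \nabla^2 f(x_t) h_t\| \leq \tfrac{L_2}{2}\|h_t\|^2$ and with Assumption~\ref{ass:direct_inexactness} in the form $\|(\nabla^2 f(x_t)-B_t)h_t\|\leq \delta\|h_t\|$, I would derive two companion inequalities: an upper bound $\|\nabla f(x_{t+1})\| \leq \tfrac{M+L_2}{2}\|h_t\|^2 + (\delta+\delta_t)\|h_t\|$, and a matching lower bound
\[
\langle \nabla f(x_{t+1}), x_t - x_{t+1}\rangle \;\geq\; \tfrac{M-L_2}{2}\|h_t\|^3 + (\delta_t - \delta)\|h_t\|^2.
\]
Since $M\geq 2L_2$, both coefficients are positive as soon as $\delta_t$ exceeds the true inexactness $\delta$. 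This is the algebraic engine driving termination of the while loop: the geometric schedule $\delta_t\leftarrow \gamma_{inc}\delta_t$ forces $\delta_t$ above $\delta$ after finitely many inner trials, and beyond that point the two bounds above make the adaptive inequality impossible to maintain, so the loop exits with $\delta_t=O(\delta)$.

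Next, I would translate the exit criterion into a decrease of $A_t$. Convexity gives $f(x_t)-f(x_{t+1}) \geq \langle \nabla f(x_{t+1}), x_t - x_{t+1}\rangle$ and $A_{t+1} \leq \|\nabla f(x_{t+1})\|\cdot\|x_{t+1}-x^{\ast}\|\leq R\|\nabla f(x_{t+1})\|$, where the distance bound by the level-set diameter $R$ uses monotone decrease of $f$ along the iterates (itself a consequence of the lower bound above combined with the cubic-majorization property when $M\geq 2L_2$). Substituting the two convexity estimates into the exit inequality produces the two-branch recurrence
\[
A_t - A_{t+1} \;\geq\; \min\!\left\{ \tfrac{A_{t+1}^2}{4\delta_t R^2},\;\; \tfrac{A_{t+1}^{3/2}}{\sqrt{3M}\,R^{3/2}} \right\}.
\]

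Finally, I would resolve this recurrence by the standard telescoping argument: the first branch, in isolation, yields $A_t = O(\delta R^2/t)$, and the second yields $A_t = O(MR^3/t^2)$; taking the worse of the two regimes gives $T = O(1)\max\{\delta_T R^2/\varepsilon,\; \sqrt{MR^3/\varepsilon}\}$. I expect the main obstacle to be the termination analysis of the while loop, specifically verifying that the upper bound on $\|\nabla f(x_{t+1})\|$ lines up with \emph{both} arguments of the $\min$ so that one of them is always exceeded once $\delta_t\gtrsim \delta$. This requires matching the cubic term $\tfrac{M}{2}\|h_t\|h_t$ in the optimality condition against the $\|\nabla f(x_{t+1})\|^{3/2}/\sqrt{3M}$ branch and the quadratic term $\delta_t h_t$ against the $\|\nabla f(x_{t+1})\|^2/(4\delta_t)$ branch, using the Cauchy--Schwarz-type split of the gradient upper bound; everything else is routine adaptive-regularization bookkeeping.
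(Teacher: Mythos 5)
Your overall architecture --- finite termination of the adaptive loop, conversion of the exit test into a descent inequality via convexity and the bound $\|\nabla f(x_{t+1})\|\geq (f(x_{t+1})-f(x^{\ast}))/R$, then resolution of the two-branch recurrence with a one-way switch between the cubic and gradient phases --- is exactly the paper's. The gap is in the per-iteration progress bound, which is the heart of the argument (the paper's Lemma~\ref{lem:inexact_cubic_step_improvement}). Your lower bound
\[
\langle \nabla f(x_{t+1}),\, x_t - x_{t+1}\rangle \;\geq\; (\delta_t-\delta)\|h_t\|^2 + \tfrac{M-L_2}{2}\|h_t\|^3
\]
comes from Cauchy--Schwarz applied to $\langle \nabla f(x_{t+1})-\nabla\phi_{x_t}(x_{t+1}), h_t\rangle$, and that step discards precisely the term you need. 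To certify the first branch of the exit test you must show $\langle \nabla f(x_{t+1}), x_t-x_{t+1}\rangle \geq \|\nabla f(x_{t+1})\|^2/(4\delta_t)$ as soon as $\delta_t\geq\delta_{x_t}^{x_{t+1}}$. But your bound, combined with the companion estimate $\|h_t\|\geq c\,\|\nabla f(x_{t+1})\|/\delta_t$ (for an absolute constant $c<1$) in the regime where the linear term of the gradient upper bound dominates, yields only $(\delta_t-\delta)\,c^2\,\|\nabla f(x_{t+1})\|^2/\delta_t^2$ plus a cubic-in-$\|\nabla f\|$ remainder; this falls short of $\|\nabla f(x_{t+1})\|^2/(4\delta_t)$ by the factor $(\delta_t-\delta)/\delta_t$, which vanishes exactly at the threshold $\delta_t=\delta$ where the loop is supposed to be allowed to stop, and the cubic remainder is negligible when $\|\nabla f(x_{t+1})\|\ll \delta_t^2/M$. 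So neither branch of the $\min$ is certified, termination of the while loop is not established, and the descent recurrence never gets off the ground.

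The missing idea is to \emph{square} the stationarity residual rather than apply Cauchy--Schwarz. Writing $\zeta=\delta_t+\tfrac{M}{2}\|h_t\|$, optimality gives $\nabla f(x_{t+1}) = \bigl(\nabla f(x_{t+1})-\nabla\phi_{x_t}(x_{t+1})\bigr) - \zeta h_t$, and taking squared norms,
\[
\|\nabla f(x_{t+1})-\nabla\phi_{x_t}(x_{t+1})\|^2 \;=\; \|\nabla f(x_{t+1})\|^2 + 2\zeta\langle\nabla f(x_{t+1}), h_t\rangle + \zeta^2\|h_t\|^2 .
\]
Since $\|\nabla f(x_{t+1})-\nabla\phi_{x_t}(x_{t+1})\|\leq \delta\|h_t\|+\tfrac{L_2}{2}\|h_t\|^2\leq\zeta\|h_t\|$ once $\delta_t\geq\delta$ and $M\geq L_2$, rearranging gives $\langle\nabla f(x_{t+1}), x_t-x_{t+1}\rangle\geq\|\nabla f(x_{t+1})\|^2/(2\zeta)$, which retains the $\|\nabla f(x_{t+1})\|^2$ term and immediately yields the first branch when $\delta_t\geq\tfrac{M}{2}\|h_t\|$; in the opposite regime one keeps the full difference of squares $\zeta^2-(\delta+\tfrac{L_2}{2}\|h_t\|)^2$ and invokes $\tfrac{\alpha}{r}+\tfrac{\beta r^3}{3}\geq\tfrac{4}{3}\beta^{1/4}\alpha^{3/4}$ (using $M\geq 2L_2$) to obtain the $\|\nabla f(x_{t+1})\|^{3/2}/\sqrt{3M}$ branch. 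With that lemma in place, the remainder of your plan goes through as in the paper; just note that resolving the recurrence cleanly requires observing that the switch between the two branches happens at most once, because $f(x_t)$ is nonincreasing while $\delta_t$ is nondecreasing.
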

\vspace{-5pt}

Let us discuss the results of the theorem. From the proof, one can show that the initial iterations are performed at the rate of \CRN method $O(1)\sqrt{\tfrac{MR^3}{\varepsilon}}$, until the moment when $\delta_t\geq \sqrt{\tfrac{9M\varepsilon}{16R}} $. At that point, the method switches to the convergence rate of the classical Gradient Descent, i.e., $O(1)\tfrac{\delta_{T}R^2}{\varepsilon}$.  
In case, where inexactness can be controlled and the method can improve the Hessian approximation as precise as it needs, it is possible to maintain the convergence rate of the \CRN method.

\begin{corollary}
\label{cor:controllable_inexactness}
The Adaptive Inexact \CRN method with \textbf{Controllable Inexactness}, allowing for the improvement of the approximation $B_t$ up to the desired accuracy, performs $T\geq 1$ iterations to find an $\varepsilon$-solution $x_T$ such that $f(x_{T}) - f(x^{\ast})\leq \varepsilon$. The value of $T$ is bounded by 
$
    T = O(1) \sqrt{\tfrac{MR^3}{\varepsilon}}.
$
\end{corollary}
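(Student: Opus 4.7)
The plan is to apply Theorem \ref{thm:cubic_convergence} directly and show that the first term in the max bound is rendered harmless by controlling the approximation error. First I would observe that
\[
O(1)\max\!\left\{\tfrac{\delta_{T}R^{2}}{\varepsilon},\;\sqrt{\tfrac{MR^{3}}{\varepsilon}}\right\}
\]
is dominated by its second argument precisely when $\delta_{T}\leq C\sqrt{M\varepsilon/R}$ for a suitable absolute constant $C$. The discussion immediately following Theorem \ref{thm:cubic_convergence} already identifies the explicit transition threshold $\sqrt{9M\varepsilon/(16R)}$, which is exactly the level of $\delta_{t}$ at which the bound switches from the cubic Newton rate to the gradient-descent rate.

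Next, under the Controllable Inexactness hypothesis, we are free to refine each approximation $B_{t}$ until its directional inexactness $\delta_{x}^{y}$ from Assumption \ref{ass:direct_inexactness} falls below any prescribed tolerance. In particular, before each outer step we enforce
\[
\delta_{t}\leq\sqrt{\tfrac{9M\varepsilon}{16R}},
\]
so the while-loop in Algorithm \ref{alg:adaptive_cubic} never needs to increase $\delta_{t}$ beyond this threshold in order to pass the acceptance test. Plugging this uniform bound back into Theorem \ref{thm:cubic_convergence} collapses the maximum to its second argument, yielding the advertised rate $T=O(1)\sqrt{MR^{3}/\varepsilon}$.

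The only subtlety — rather than a genuine obstacle — is to verify that the controllability of $B_{t}$ is operationally compatible with the adaptive mechanism: one has to check that on-demand refinement produces a matrix that actually satisfies Assumption \ref{ass:direct_inexactness} with the reduced parameter, so that the test on $\langle\nabla f(x_{t+1}),v_{t}-x_{t+1}\rangle$ indeed passes at the smaller $\delta_{t}$. Once this is granted (as the corollary statement tacitly does by declaring the inexactness "controllable"), the claim reduces to elementary algebra on the maximum in Theorem \ref{thm:cubic_convergence}, with no new convergence analysis required.
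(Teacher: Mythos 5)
Your proposal is correct and follows essentially the same route as the paper: both arguments apply Theorem \ref{thm:cubic_convergence} and use controllability to keep $\delta_t$ small enough that the method never leaves the cubic-rate regime (``Case 1'' of the appendix proof), so the maximum collapses to $\sqrt{MR^3/\varepsilon}$. The only difference is the tolerance schedule — you enforce the fixed $\varepsilon$-dependent bound $\delta_t \leq \sqrt{9M\varepsilon/(16R)}$ throughout, whereas the paper's discussion after the theorem uses the decreasing, $\varepsilon$-free schedule $\delta_t = O(MR/t)$, which permits coarser (cheaper) approximations in the early iterations while achieving the same rate.
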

Therefore, we achieve the same convergence rate as the classical \CRN method but with potentially much cheaper costs of iterations.

\vspace{-5pt}
In summary, we propose the new Adaptive Inexact \CRN method based on new inexactness assumptions. It introduces the flexibility of choosing the approximation $B_{x}$ and controlling its inexactness at each step. All proofs are provided in the Appendix. Additionally, an alternative version of the method for a wider class of functions is presented in the Appendix as well.

\vspace{-0.3 cm}   
\section{Adaptive Accelerated Inexact Cubic Newton for convex functions}
\label{sec:accelerated_cubic}

In this section, we introduce a novel algorithm called the Adaptive Accelerated Inexact \CRN method. Our method draws inspiration from the work of \cite{nesterov2008accelerating} and its extensions for inexact Hessian computations as presented in papers of \cite{ghadimi2017second, agafonov2020inexact}. We enhance these methods by developing an adaptive version that allows us to estimate and control the level of inexactness in the inner step. For this section, we assume that the function $f(x)$ is convex and has $L_2$-Lipschitz-continuous Hessian.
\\
The Nesterov's type acceleration is based on the estimating sequence technique, where we aggregate linear models of the form $l(x,y)=f(y)+\la \nabla f(y), x-y\ra$ within the function 
\begin{equation*}
    \psi_t(x) =  \textstyle{\sum \limits_{i=2}^{3} \tfrac{ \bk^{t}_i}{i}\|x -x_0\|^i + \sum \limits_{j = 0}^{t - 1} \tfrac{\alpha_j}{A_j}l(x,x_{j+1}).}
\end{equation*}
Now, we are ready to present the method and state the convergence theorem. 
\begin{algorithm}
  \caption{Adaptive Accelerated Inexact Cubic Regularized Newton}\label{alg:inexact_acc}
  \begin{algorithmic}[1]
      \STATE \textbf{Input:} $y_0 = x_0$ is starting point; constants $M \geq 2L_2$; increase multiplier $\gamma_{inc}$; 
     starting inexactness $\delta_0  \geq 0$; non-negative non-decreasing sequences $\{\kappa_2^t\}_{t \geq 0}$, $\{\kappa_3^t\}_{t \geq 0}$, $\{\alpha_t\}_{t \geq 0}$, and $\{A_t\}_{t \geq 0}$.
    \FOR{$t \geq 0$} 
        \STATE 
            $v_t = (1 - \alpha_t)x_t + \alpha_t y_t$,
        \STATE $x_{t+1} = S_{M,\delta_t}(v_{t})$
                \WHILE{$\langle \nabla f(x_{t+1}), v_t - x_{t+1} \rangle 
        \leq 
        \min \left\{ \tfrac{\|\nabla f(x_{t+1})\|_{\ast}^2}{4\delta_t }, 
         \tfrac{\|\nabla f(x_{t+1})\|_{\ast}^\frac{3}{2}}{(3M)^\frac{1}{2}}\right\}$}
			     \STATE $\delta_t= \delta_t\gamma_{inc}$
			     \STATE $x_{t+1} = S_{M,\delta_t}(v_{t})$
                \ENDWHILE
        \STATE Compute 
            $
                y_{t+1}=\arg \min _{x \in \R^d} \psi_{t+1}(x)
            $
    \ENDFOR
  \end{algorithmic}
\end{algorithm}

\begin{theorem}\label{thm:acc_convergence}
Let $f(x)$ be a convex function, $f(x)$ has $L_2$-Lipschitz-continuous Hessian, $B_{t}$ is a $\delta_t$-inexact Hessian, and $M\geq 2L_2$. 
Adaptive Accelerated Inexact \CRN method performs $T\geq 1$ iterations with parameters
    \vspace{-5pt}
    \begin{gather*}
        \bk_2^{{t+1}} =  \tfrac{2\delta_t\al_t^2}{A_t}, \quad  \bk_{3}^{t} = \tfrac{8M}{3} \tfrac{\alpha_{t+1}^{3}}{A_{t+1}}, \, \alpha_t = \tfrac{3}{t+3}, \,\, A_t = \tfrac{6}{(t+1)(t+2)(t+3)}
    \end{gather*}
    to find $\varepsilon$-solution $x_T$ such that $f(x_{T}) - f(x^{\ast})\leq \varepsilon$. The value of $T$ is bounded by  
    \begin{equation*}
     T = O(1) \max \lb \ls \tfrac{\delta_{T} R^2}{\varepsilon}\rs^{1/2};  \ls\tfrac{MR^3}{\varepsilon}\rs^{1/3}\rb, \text{where }R= \|x_0-x^{\ast}\|.
    \end{equation*}
\end{theorem}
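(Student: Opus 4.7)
My approach is the estimating-sequence argument of Nesterov, extended to absorb the inexactness of the Hessian approximation and the adaptive choice of $\delta_t$. The goal is to establish, by induction on $t$, the invariant $A_t f(x_t)\le \psi_t^{\ast}:=\min_x \psi_t(x)$. Once this is proved, evaluating $\psi_T$ at the optimum gives $\psi_T^{\ast}\le A_T f(x^{\ast})+\tfrac{\bk_2^T}{2}R^2+\tfrac{\bk_3^T}{3}R^3$, and dividing through by $A_T$ yields the functional-gap bound from which the claimed iteration count follows.

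The heart of the argument is a one-step descent lemma for $x_{t+1}=S_{M,\delta_t}(v_t)$. From the $L_2$-Lipschitz Hessian and Assumption \ref{ass:direct_inexactness}, one has
\[
f(x_{t+1})\le \phi_{v_t}(x_{t+1})+\tfrac{\delta_t}{2}\|x_{t+1}-v_t\|^2+\tfrac{L_2}{6}\|x_{t+1}-v_t\|^3,
\]
and the optimality of the cubic subproblem \eqref{eq:inexact_cubic_operator} together with $M\ge 2L_2$ provides cubic progress. The exit condition of the while loop additionally lower-bounds $\langle\nabla f(x_{t+1}),v_t-x_{t+1}\rangle$ in terms of $\|\nabla f(x_{t+1})\|_{\ast}$, $\delta_t$, and $M$, which controls the error when replacing the inexact model $\phi_{v_t}$ by the linear model $l(\cdot,x_{t+1})$ anchored at the new iterate. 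Termination of the while loop itself is immediate: since $B_{v_t}$ is genuinely $\delta_{v_t}^{x_{t+1}}$-inexact, any $\delta_t$ exceeding that true value forces the stopping condition to hold.

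I then plug this lemma into the recurrence $\psi_{t+1}(x)=\psi_t(x)+\alpha_t\, l(x,x_{t+1})+\text{(increments of the two regularizers)}$, using the coupling $v_t=(1-\alpha_t)x_t+\alpha_t y_t$ with $y_t=\arg\min \psi_t$. Convexity of $f$ lets me combine the inductive hypothesis on $\psi_t^{\ast}$ with the new linear term at $x_{t+1}$; the choices $\bk_2^{t+1}=2\delta_t\alpha_t^2/A_t$ and $\bk_3^t=(8M/3)\alpha_{t+1}^3/A_{t+1}$ are tuned so that the quadratic inexactness term $(\delta_t/2)\|x_{t+1}-v_t\|^2$ and the cubic term $(M/6)\|x_{t+1}-v_t\|^3$ are precisely absorbed into the gap $\psi_{t+1}^{\ast}-A_{t+1}f(x_{t+1})$, thereby closing the induction.

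Finally, with $\alpha_t=3/(t+3)$ and $A_t=6/((t+1)(t+2)(t+3))$, direct computation gives $\bk_2^T=O(\delta_T T)$ and $\bk_3^T=O(M)$, so $f(x_T)-f(x^{\ast})=O(\delta_T R^2/T^2+MR^3/T^3)$, which rearranges to the stated bound on $T$. The main obstacle I anticipate is the bookkeeping in the inductive step: matching the adaptive $\delta_t$ with the monotonically defined $\bk_2^t$ so that no slack leaks across iterations, and verifying that the while-loop inequality links the gradient term, the cubic term, and the quadratic inexactness with zero slack under the prescribed parameter choices. A smaller subtlety is that the theorem states $\delta_T$ (the final value), so one must argue that the earlier $\delta_t$ are dominated by $\delta_T$ or, equivalently, replace $\delta_T$ by $\max_{t\le T}\delta_t$ consistently throughout the estimate.
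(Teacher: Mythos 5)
Your overall strategy coincides with the paper's: the estimating-sequence induction $\psi_t^{\ast}\ge f(x_t)/A_{t-1}$ (Lemma \ref{lem:step}), the upper bound $\psi_t(x^{\ast})$ via convexity (Lemma \ref{lem:upper_seq}), the step-improvement bound on $\langle\nabla f(x_{t+1}),v_t-x_{t+1}\rangle$ extracted from the while-loop exit condition (Lemma \ref{lem:inexact_cubic_step_improvement}), Fenchel-conjugate estimates (Lemma \ref{lem:dual}) to absorb the term $\frac{\alpha_t}{A_t}\langle\nabla f(x_{t+1}),y_{t+1}-y_t\rangle$ into the regularizers, and the final arithmetic with $\alpha_t=3/(t+3)$, $A_t=6/((t+1)(t+2)(t+3))$, giving $f(x_T)-f(x^{\ast})=O(\delta_T R^2/T^2+MR^3/T^3)$. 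Those parts of your sketch would go through essentially as in the paper.

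The genuine gap is precisely the point you flag as ``bookkeeping'' and leave unresolved: how the adaptively increased $\delta_t$ interacts with the coefficient $\bk_2^{t}$ inside the induction. The uniform-convexity bound (Lemma \ref{lm:argmin}) yields $\psi_t(x)\ge\psi_t^{\ast}+\frac{\bk_2^{t}}{2}\|x-y_t\|^2+\frac{\bk_3^{t}}{6}\|x-y_t\|^3$ with the \emph{old} moduli, centered at $y_t$, whereas absorbing the new linear term via Lemma \ref{lem:dual} requires the quadratic modulus $\bk_2^{t+1}=\frac{2\delta_t\alpha_t^2}{A_t}$, which need not be dominated by $\bk_2^{t}$ because the inner loop may have just increased $\delta_t$. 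Passing from $\frac{\bk_2^{t+1}-\bk_2^{t}}{2}\|y_{t+1}-x_0\|^2+\frac{\bk_2^{t}}{2}\|y_{t+1}-y_t\|^2$ to $\frac{\bk_2^{t+1}}{2}\|y_{t+1}-y_t\|^2$ is not automatic; the paper does not prove it analytically but instead enforces condition \eqref{eq:condition_acc} (that $\|y_{t+1}-y_t\|>\|y_{t+1}-x_0\|$ or $\bk_2^{t}\ge\frac{2\delta_t\alpha_t^2}{A_t}$) as an algorithmic check with a rollback (lines 10--12 of Algorithm \ref{alg:inexact_acc_detailed}), and the authors explicitly remark that a proof dispensing with this check is open. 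So the claim that the $\bk$'s can be ``tuned so that the inexactness terms are precisely absorbed'' cannot simply be asserted: without either this algorithmic safeguard or a genuinely new argument, your induction does not close at that step. Everything else in your plan matches the paper.
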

Note, that it is a shortened version of the method, the full method and all proofs are provided in the Appendix. Similar to the previous section, if we have the ability to control the level of inexactness and improve the accuracy of the Hessian approximation as needed, it becomes possible to match the convergence rate of the Accelerated \CRN method with possibly much cheaper computational costs of iterations.
\begin{corollary}
The Adaptive Accelerated Inexact \CRN method with \textbf{Controllable Inexactness}, allowing for the improvement of the approximation $B_t$ up to the desired accuracy, performs $T\geq 1$ iterations to find an $\varepsilon$-solution $x_T$ such that $f(x_{T}) - f(x^{\ast})\leq \varepsilon$. The value of $T$ is bounded by 
$
    T = O(1) \ls\tfrac{MR^3}{\varepsilon}\rs^{\frac{1}{3}}.
$
\end{corollary}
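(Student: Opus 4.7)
The plan is to deduce the corollary directly from Theorem~\ref{thm:acc_convergence} by exploiting the controllability of $\delta_t$ to eliminate the first term in the max. Concretely, Theorem~\ref{thm:acc_convergence} yields
\begin{equation*}
    T \;=\; O(1) \max \lb \ls \tfrac{\delta_{T} R^2}{\varepsilon}\rs^{1/2}; \ls\tfrac{MR^3}{\varepsilon}\rs^{1/3}\rb,
\end{equation*}
so it suffices to guarantee that the first argument of the max is dominated by the second. Setting
$\ls \tfrac{\delta R^2}{\varepsilon}\rs^{1/2} \leq \ls \tfrac{MR^3}{\varepsilon}\rs^{1/3}$
and solving for $\delta$ gives the threshold $\delta \leq M^{2/3}\varepsilon^{1/3}$ (up to an absolute constant independent of $R$). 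Hence, whenever the approximation $B_t$ can be driven to satisfy Assumption~\ref{ass:direct_inexactness} with directional error at most this threshold, the rate collapses to $T = O\ls(MR^3/\varepsilon)^{1/3}\rs$.

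The first step I would carry out is to specify that at every outer iteration $t$, before the adaptive inner loop even begins, the user chooses a Hessian approximation $B_{v_t}$ satisfying $\|(\nabla^2 f(v_t) - B_{v_t})(x_{t+1}-v_t)\| \leq \delta \|x_{t+1}-v_t\|$ with some $\delta \leq c\, M^{2/3}\varepsilon^{1/3}$ for an explicit absolute constant $c$. In the stochastic/subsampled setting this is realized by picking a sufficiently large Hessian batch (as discussed in the introduction's footnote); in the quasi-Newton setting this is realized by refining the approximation along the candidate direction $x_{t+1}-v_t$ until the inequality is verified. This guarantees that the adaptive search in Algorithm~\ref{alg:inexact_acc} terminates with $\delta_t \leq \delta$ at every iteration, so in particular $\delta_T \leq c\, M^{2/3}\varepsilon^{1/3}$.

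Next, I would plug this bound for $\delta_T$ back into the estimate of Theorem~\ref{thm:acc_convergence}. The first term becomes
\begin{equation*}
    \ls \tfrac{\delta_T R^2}{\varepsilon}\rs^{1/2} \;\leq\; \ls \tfrac{c\, M^{2/3}\varepsilon^{1/3} R^2}{\varepsilon}\rs^{1/2} \;=\; \sqrt{c}\, \ls \tfrac{MR^3}{\varepsilon}\rs^{1/3},
\end{equation*}
so it is absorbed into the second term up to a constant factor, and the max reduces to $O(1)(MR^3/\varepsilon)^{1/3}$, which is the claimed bound.

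The only subtle point is the circularity of choosing $\delta$ in terms of the target accuracy $\varepsilon$: one must ensure that the adaptive termination criterion in the inner while-loop of Algorithm~\ref{alg:inexact_acc} is compatible with this prescribed $\delta$. This is not really an obstacle since the criterion is a sufficient descent condition that is implied by Assumption~\ref{ass:direct_inexactness} holding with any $\delta_t \geq \delta$, so once the user supplies a $B_{v_t}$ with directional error at most $c\, M^{2/3}\varepsilon^{1/3}$, the loop exits at $\delta_t \leq \gamma_{inc}\cdot c\, M^{2/3}\varepsilon^{1/3}$ and the above chain of inequalities goes through unchanged up to the harmless factor $\sqrt{\gamma_{inc}}$. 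No new convergence analysis is required beyond Theorem~\ref{thm:acc_convergence}; the entire content of the corollary is the algebraic observation that bounding $\delta_T$ by $O(M^{2/3}\varepsilon^{1/3})$ balances the two terms in the max.
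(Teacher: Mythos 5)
Your proposal is correct, and it reaches the corollary the only way it can be reached: by invoking Theorem~\ref{thm:acc_convergence} and forcing the $\delta$-term to be subdominant. The algebra checks out — the balancing threshold $\delta \leq M^{2/3}\varepsilon^{1/3}$ is exactly where $(\delta R^2/\varepsilon)^{1/2}$ equals $(MR^3/\varepsilon)^{1/3}$, and the $R$-dependence does cancel — and your handling of the inner while-loop is right: Lemma~\ref{lem:inexact_cubic_step_improvement} guarantees the loop terminates as soon as $\delta_t$ exceeds the true directional error, so supplying a $B_{v_t}$ with error below the threshold caps $\delta_T$ at $\gamma_{inc}$ times that threshold. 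The one difference worth noting is the schedule: the paper never writes out a proof of this corollary, but its stated recipe (in the remark after the non-accelerated theorem and in the appendix comparison table) is to let the tolerance decay as $\delta_t = O(MR/t)$, which makes the term $\delta_T R^2/T^2$ in the final bound $36\delta_T R^2/(T+3)^2 + 72MR^3/(T+1)^3$ of order $MR^3/T^3$ at every $T$. Your choice is a single $\varepsilon$-dependent constant; at the terminal iteration $T = (MR^3/\varepsilon)^{1/3}$ the two prescriptions coincide, since $MR/T = M^{2/3}\varepsilon^{1/3}$ there. The paper's schedule buys an anytime guarantee (the $O(1/T^3)$ rate holds for all $T$ without knowing $\varepsilon$ in advance, and early iterations may use much coarser approximations), while yours requires fixing the target accuracy up front but is simpler to state and verify. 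Either choice yields the claimed $T = O(1)(MR^3/\varepsilon)^{1/3}$.
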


\section{Quasi-Newton(QN) Approximation}\label{sec:Quasi-Newton}
In this section, we propose an approach for creating an inexact Hessian by \QN approximations.
The main idea is straightforward: we calculate $B_x$ as a Quasi-Newton approximation and  use it for the step $y = S_{M,\delta}(x)$. 
Firstly, we discuss various \QN approximations with low-rank $B_t$.
We assume that the approximation $B_t$ takes the form:
\begin{equation}
\label{eq:low_rank}
    B_t = B^m_t = B^0 + \sum_{i=0}^{m-1} \xi_i u_i u_i^T + \sum^{m-1}_{i=0} \beta_i v_i v_i^T.
\end{equation}
\textbf{L-BFGS} is one of the most popular and effective \QN approximations, and it can be expressed as follows:
\begin{equation}
\label{eq:l-bfgs}
B^{m+1}_{t} = B^{m}_{t} +  \tfrac{y_m y_m^{\top}}{y_m^{\top} s_m} -  \tfrac{B_t^m s_m (B^m_t s_m)^{\top}}{s_m^{\top} B_t^m s_m}.
\end{equation}
In Equation \eqref{eq:l-bfgs}, we have $u_i = y_i$, $v_i = B^i_t s_i$, $\xi_i= \frac{1}{y_i^{\top} s_i}$, and $\beta_i= -\frac{1}{s_i^{\top} B_t^i s_i}$. For memory-size $m$, L-BFGS is $2m$-rank update.
\\
 Note, that for \eqref{eq:l-bfgs}, we can define matrix $B_t^{m}(Y,S)$, where $Y$ and $S$ are stacked set of vectors $Y = [y_0, \ldots, y_{m_1}]$ and $S = [s_0, \ldots, s_{m_1}]$. This allows us to calculate the matrix $B_t^{m}(Y, S)$ for any given $Y$ and $S$.
\\
 Now, we focus on the choice of $Y$ and $S$ for the \QN approximation. There are two main variants:
 \\
 \textbf{\QN with history.} This variant is well-known and classic. It involves the following update rules:
 \begin{equation*}
     \label{eq:history}
     s_i = z_{i+1}-z_{i},\qquad
     y_i = \nabla f(z_{i+1}) - \nabla f(z_{i}).
 \end{equation*}
 This approach is computationally efficient as it does not require additional gradient calculations. However, its main drawback is that it cannot increase the accuracy at the current point. In the paper by \cite{berahas2021quasi}, it was shown that to ensure $\delta_{T}\leq L_1$, we need to divide $\xi_i$ and $\beta_i$ by $m$.
 \\
\textbf{\QN with sampling.}
This variant is based on fast computation of Hessian-vector products (HVP):
 \begin{equation*}
     \label{eq:sample}
     y_i = \nabla^2 f(x_{t})s_i,
 \end{equation*}
where $s_i$ is a vector from $1$-sphere such that $\|s_i\|=1$. For $m<<d$, each $s_i$ is linearly independent with high probability.  This variant requires $m$ gradient/HVP computations per step, which is significantly less than computing the full Hessian ($d$ HVPs). The advantage of using information from the current Hessian is that it provides more control over inexactness. We can increase memory to sample a more precise approximation. For the sampled $B_t^m$, it is possible to guarantee that $\delta_{T}\leq L_1$. Further details and proofs are provided in the Appendix.
\\
In summary, we have two different policies for choosing $Y$ and $S$. The \QN with history is the most computationally efficient but cannot improve the accuracy of the Hessian. \QN with sampling costs $m$ computations of HVPs and can be used to increase the accuracy of the approximation. It is a good idea to combine these policies and use history as a basic approximation and sampling to increase the accuracy of the Hessian approximation.
\\
Finally, we discuss how to efficiently solve the subproblem \eqref{eq:inexact_cubic_operator} $x_{t+1} = S_{M,\delta}(x_t)$ with low-rank $B_t$ from \eqref{eq:low_rank}.
    
\begin{equation*}
x_{t+1} = x_{t} + \argmin\limits_{ h \in \E} \lb \la\nabla f(x_t),h\ra + \frac{1}{2}\la B_{t} h,h\ra+ \frac{M}{6}\|h\|^3 + \frac{\delta}{2}\|h\|^2\rb.
\end{equation*}
 
The subproblem's first derivative with regard to $h$:
\begin{equation*}
    \nabla f(x_t) + (B_t + \delta I )h^{\ast} + \frac{M}{2}\|h^{\ast}\| h^{\ast} = 0.
\end{equation*} 
The solution to the subproblem can be obtained as follows:
\begin{equation*}
\label{eq:cubic_subproblem_solution}
   h^{\ast} = -\ls B_t + \delta I  + \tfrac{M}{2}\|h^{\ast}\|I\rs^{-1}\nabla f(x).
\end{equation*}
To find $\|h^{\ast}\|$, we formulate and solve the one-dimensional dual problem using ray-search:
\begin{equation*}
 \|h^{\ast}\| = \arg \min_{\tau}\left\lbrace \la\left(B_t + \delta I + \tfrac{M}{2}\tau I\right)^{-1}\nabla f(x_t),\nabla f(x_t)\ra+ \tfrac{M}{6}\tau^2\right\rbrace,
\end{equation*}
Since $B_t$ is a low-rank matrix with a sum structure, we can effectively invert it using the Woodbury matrix identity: 
\begin{equation*}
(A + UCV)^{-1} = A^{-1} - A^{-1}U(C^{-1} + V A^{-1}U)^{-1}V A^{-1}.
\end{equation*}

Thus, the inversion takes $O(m^3)$ instead of $O(d^3)$ as for \CRN. By applying SVD to $VA^{-1}U$, the multiple inversions for different $\tau$ would cost only $O(m^3 + m^2 \log(\varepsilon^{-1}))$ instead of $O(m^3 \log(\varepsilon^{-1}))$ as before. Therefore, the total computational cost of ray-search procedure for low-rank matrix $B_t$ is $O(m^2d + m^2 \log(\varepsilon^{-1}))$. It is worth noting that for Cubic Regularized Newton method, it typically takes $O(d^3 + d \log(\varepsilon^{-1}))$
More details regarding the computational complexity of inversion and the solution to the subproblem can be found in the Appendix.
\vspace{-5pt}

We summarize the convergence of Cubic \QN Methods and Accelerated Cubic \QN methods in the following theorems.
\begin{theorem}
\label{thm:cubic_bfgs}
Let $f(x)$ be a convex function with respect to the global minimizer $x^{\ast}$, and let $f(x)$ have an $L_1$-Lipschitz-continuous gradient and an $L_2$-Lipschitz-continuous Hessian. Suppose $B_{t}$ is a $m$-memory L-BFGS approximation, and $M\geq 2L_2$. The Adaptive Inexact Cubic Quasi-Newton method performs $T\geq 1$ iterations to find an $\varepsilon$-solution $x_T$ such that $f(x_{T}) - f(x^{\ast})\leq \varepsilon$. The value of $T$ is bounded by: 
\begin{equation*}
    T = O(1) \max\lb \tfrac{L_1R^2}{\varepsilon};  \sqrt{\tfrac{MR^3}{\varepsilon}}\rb,
\end{equation*}
where $R~=~\max\limits_{x \in \mathcal{L}} \|x - x^{\ast}\|$ represents the diameter of the level set $\mathcal{L}~=~\lb x \in \R^d : f(x) \leq f(x_0) \rb$.
\end{theorem}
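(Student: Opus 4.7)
}
The plan is to reduce the statement to Theorem \ref{thm:cubic_convergence} by showing that the scaled $m$-memory L-BFGS approximation $B_t$ satisfies Assumption \ref{ass:direct_inexactness} along every step direction with a directional inexactness bounded, up to a constant, by $L_1$. Once this is established, substituting $\delta_t \leq c L_1$ into the bound
$
    T = O(1)\max\left\{\tfrac{\delta_T R^2}{\varepsilon},\,\sqrt{\tfrac{MR^3}{\varepsilon}}\right\}
$
from Theorem \ref{thm:cubic_convergence} immediately yields the claimed rate.

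The first step is to verify that both the true Hessian and the L-BFGS approximation have operator norm at most (a constant times) $L_1$. Since $f$ has $L_1$-Lipschitz gradient, $\|\nabla^2 f(x)\| \leq L_1$ for all $x$. For the approximation, I would rely on the construction indicated in Section \ref{sec:Quasi-Newton}: choosing the initial matrix $B^0 \preceq L_1 I$, and, following \cite{berahas2021quasi}, scaling the rank-one coefficients $\xi_i$ and $\beta_i$ in \eqref{eq:l-bfgs} by $1/m$. Then an induction over the $2m$ rank-one updates—controlling the positive curvature term via $y_i^\top s_i$ and the negative term via the $B^i_t$-weighted norm of $s_i$—gives $\|B_t\| \leq L_1$ (up to a universal constant).

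With both norms bounded, the directional inexactness follows by a one-line triangle inequality: for any step direction $h = y - x$,
\begin{equation*}
    \|(\nabla^2 f(x) - B_x)h\| \;\leq\; (\|\nabla^2 f(x)\| + \|B_x\|)\|h\| \;\leq\; 2L_1 \|h\|,
\end{equation*}
so Assumption \ref{ass:direct_inexactness} holds with $\delta_x^y \leq 2L_1$. Because the outer adaptive loop in Algorithm \ref{alg:adaptive_cubic} only ever increases $\delta_t$ by a factor $\gamma_{\mathrm{inc}}$ until its acceptance test is met, and since $\delta_t = 2L_1$ is always an admissible upper bound on the directional inexactness, the loop terminates with $\delta_T = O(L_1)$.

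The main obstacle I anticipate is the norm-boundedness step: classical L-BFGS recursions do not automatically give $\|B_t\| \leq L_1$ because the BFGS curvature terms can inflate spectrum along directions with small $s_m^\top B_t^m s_m$. The $1/m$ rescaling of $\xi_i$ and $\beta_i$ is precisely what tames this, and carefully propagating the resulting bound through the $m$ L-BFGS updates—while also retaining positive semidefiniteness required for the subproblem \eqref{eq:inexact_cubic_operator} to be well-posed—is the delicate part. The rest of the argument is a direct invocation of Theorem \ref{thm:cubic_convergence} with $\delta_T = O(L_1)$.
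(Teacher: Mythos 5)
Your proposal is correct and follows essentially the same route as the paper: the paper's Lemmas \ref{lm:bfgs} and \ref{lm:bfgs_damped} likewise establish $0 \preceq B_t \preceq O(L_1) I$ by induction over the rank-one updates (using $\|y_m\|^2/(y_m^{\top}s_m) \leq L_1$ for the positive term and discarding the negative-semidefinite term, with the $1/m$ damping removing the factor $m$), combine this with $0 \preceq \nabla^2 f(x_t) \preceq L_1 I$ to get $\|B_t - \nabla^2 f(x_t)\| \leq O(L_1)$, and then invoke Theorem \ref{thm:cubic_convergence} with $\delta_T = O(L_1)$. The only cosmetic difference is that you phrase the final bound as a triangle inequality on operator norms while the paper uses the two-sided Loewner ordering; these are equivalent.
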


\begin{theorem}
\label{thm:accelerated_cubic_bfgs}
Let $f(x)$ be a convex function with respect to the global minimizer $x^{\ast}$, and let $f(x)$ have an $L_1$-Lipschitz-continuous gradient and an $L_2$-Lipschitz-continuous Hessian. Suppose $B_{t}$ is a $m$-memory L-BFGS approximation, and $M\geq 2L_2$. The Adaptive Inexact Cubic Quasi-Newton method performs $T\geq 1$ iterations to find an $\varepsilon$-solution $x_T$ such that $f(x_{T}) - f(x^{\ast})\leq \varepsilon$. The value of $T$ is bounded by: 
\begin{equation*}
    T = O(1) \max\lb \ls \tfrac{L_1R^2}{\varepsilon}\rs^{1/2};  \ls\tfrac{MR^3}{\varepsilon}\rs^{1/3}\rb, \text{where }R= \|x_0-x^{\ast}\|.
    \end{equation*}
\end{theorem}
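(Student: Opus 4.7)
The plan is to reduce the statement to the abstract convergence guarantee in Theorem \ref{thm:acc_convergence} and then discharge the hypotheses by exhibiting an explicit bound on the directional inexactness of the $m$-memory L-BFGS approximation \eqref{eq:l-bfgs}. Concretely, Theorem \ref{thm:acc_convergence} already supplies the bound
\[
    T = O(1)\max\Bigl\{ (\tfrac{\delta_T R^2}{\varepsilon})^{1/2},\ (\tfrac{MR^3}{\varepsilon})^{1/3}\Bigr\},
\]
whenever $B_t$ is a $\delta_t$-inexact Hessian in the sense of Assumption \ref{ass:direct_inexactness} and $M\ge 2L_2$. So all that needs to be verified is that the L-BFGS update produces a positive semidefinite matrix $B_t$ and that the final directional error $\delta_T$ is at most a constant multiple of $L_1$.

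First I would check the two structural prerequisites for applying Theorem \ref{thm:acc_convergence}: (i) $B_t\succeq 0$, and (ii) the cubic subproblem $S_{M,\delta_t}(v_t)$ is well defined. Both are standard for L-BFGS: starting from $B^0\succeq 0$ and maintaining the curvature condition $y_i^\top s_i>0$ (which is guaranteed under $L_1$-smoothness when the secant pairs come from gradient differences, or trivially when the pairs are generated by HVPs on a convex function), the BFGS update \eqref{eq:l-bfgs} preserves positive definiteness. Combined with the regularizers $\delta_t I$ and $\tfrac{M}{6}\|\cdot\|^3$, the cubic subproblem is strongly convex and has a unique minimizer, so each step of Algorithm \ref{alg:inexact_acc} is well defined and the inner adaptive loop terminates after a bounded number of doublings of $\delta_t$.

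Next I would establish the uniform bound $\delta_t \le c\, L_1$ for a numerical constant $c$. Because $f$ has $L_1$-Lipschitz gradient, $\|\nabla^2 f(x)\|\le L_1$ for all $x$, and the L-BFGS recursion with the Berahas et al.\ $1/m$ rescaling of $\xi_i,\beta_i$ (as reviewed in Section \ref{sec:Quasi-Newton}) produces $B_t$ with $\|B_t\|\le c_1 L_1$. Then for any direction $h=y-x$,
\[
    \|(\nabla^2 f(x)-B_t)h\| \le (\|\nabla^2 f(x)\|+\|B_t\|)\,\|h\| \le c\, L_1 \|h\|,
\]
which verifies Assumption \ref{ass:direct_inexactness} with $\delta_x^y\le c L_1$. (If one restricts attention to the directions used along the trajectory and assumes history-based secant pairs aligned with the step directions, one gets the sharper bound $\delta_x^y\le O(L_1)$ directly from the secant equation $B_t s_i=y_i=\nabla^2 f(\tilde z)s_i$ for some $\tilde z$, combined with $L_1$-smoothness.) The adaptive rule in Algorithm \ref{alg:inexact_acc} only multiplies $\delta_t$ by $\gamma_{inc}$ when the certificate fails, so the terminal value is still $\delta_T = O(L_1)$ up to a multiplicative factor $\gamma_{inc}$.

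Finally, substituting $\delta_T=O(L_1)$ into the rate from Theorem \ref{thm:acc_convergence} immediately yields
\[
    T = O(1)\max\Bigl\{(\tfrac{L_1R^2}{\varepsilon})^{1/2},\ (\tfrac{MR^3}{\varepsilon})^{1/3}\Bigr\},
\]
which is the claim with $R=\|x_0-x^\ast\|$. The main obstacle I expect is the bound $\|B_t\|\le c_1 L_1$: the classical L-BFGS bound is not automatic for arbitrary secant pairs and requires either the scaling trick from Berahas et al.\ or an explicit choice of $B^0$ and a uniform lower bound on $y_i^\top s_i/\|s_i\|^2$. Making this quantitative in the worst case, independently of the iterates, is the delicate part; once done, the rest of the argument is a direct appeal to Theorem \ref{thm:acc_convergence}.
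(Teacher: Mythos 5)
Your proposal is correct and follows essentially the same route as the paper: the paper also reduces Theorem \ref{thm:accelerated_cubic_bfgs} to Theorem \ref{thm:acc_convergence} by proving a spectral-norm bound $\|B_t-\nabla^2 f(x_t)\|\le \delta_{max}$ (Lemmas \ref{lm:bfgs} and \ref{lm:bfgs_damped}), using exactly your two ingredients --- positive semidefiniteness of the L-BFGS iterates plus $\nabla^2 f(x)\preceq L_1 I$ for one side, and a bound on $\lambda_{\max}(B_t)$ built up term-by-term from the recursion (with the $1/m$ damping to get $\delta_{max}\le L_1$ rather than $mL_1$) for the other. The ``delicate part'' you flag is discharged in the paper by gradient cocoercivity, $y_m^{\top}s_m\ge \|y_m\|^2/L_1$ for convex $L_1$-smooth $f$, which bounds each added rank-one term $\|y_m y_m^{\top}/(y_m^{\top}s_m)\|\le L_1$ uniformly in the iterates.
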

    
\section{Experiments}\label{sec:experiments}

In this section, we present numerical experiments conducted to demonstrate the efficiency of our proposed methods. We consider $l_2$-regularized logistic regression problems of the form:
\begin{equation*}
    f(x)= \tfrac{1}{n}{\textstyle \sum_{i=1}^n} \log(1+ \exp(-b_i a_i^{\top}x)) + \tfrac{\mu}{2}\|x\|^2,
\end{equation*}
where $\{(a_i,b_i)\}_{i=1}^n$ are the training examples described by features $a_i$ and class labels $b_i \in \{-1,1\}$, and $\mu \geq 0$ is the regularization parameter.

\vspace{-0.3 cm}
\paragraph{Setup.}
We present results on the \texttt{MNIST} dataset ($d=784$) by \cite{lecun1998mnist}.
We compare the performance of the history and sampled Adaptive Cubic \QN method, sampled Accelerated Adaptive Cubic LBFGS, Gradient Descent (GD), \CRN method, and classical \QN methods (LBFGS, LSR1). For all \QN algorithms, we set the memory size $m=10$. We present experiments for theoretical hyperparameters in the convex case with $\mu=0$ in Figure~\ref{fig:mnist_theo_cv} and in strongly convex case with $\mu = 10^{-4}$ in Figure~\ref{fig:mnist_theo}.
To demonstrate the globalization properties of the methods, we consider the starting point $x_0$ far from the solution, specifically $x_0 = 1\cdot e $, where $e$ is the all-one vector. The classical Newton method diverges from that point. 
Additional experiments with different hyperparameters are provided in the Appendix, including datasets  \texttt{a9a} ($d=123$), \texttt{real-sim} ($d=20958$), \texttt{gisette} ($d=5000$) and  \texttt{CIFAR-10} ($d=1024$) datasets by \cite{chang2011libsvm}.

\vskip-10pt
\begin{figure}
  \centering
  \includegraphics[width=0.45\linewidth]{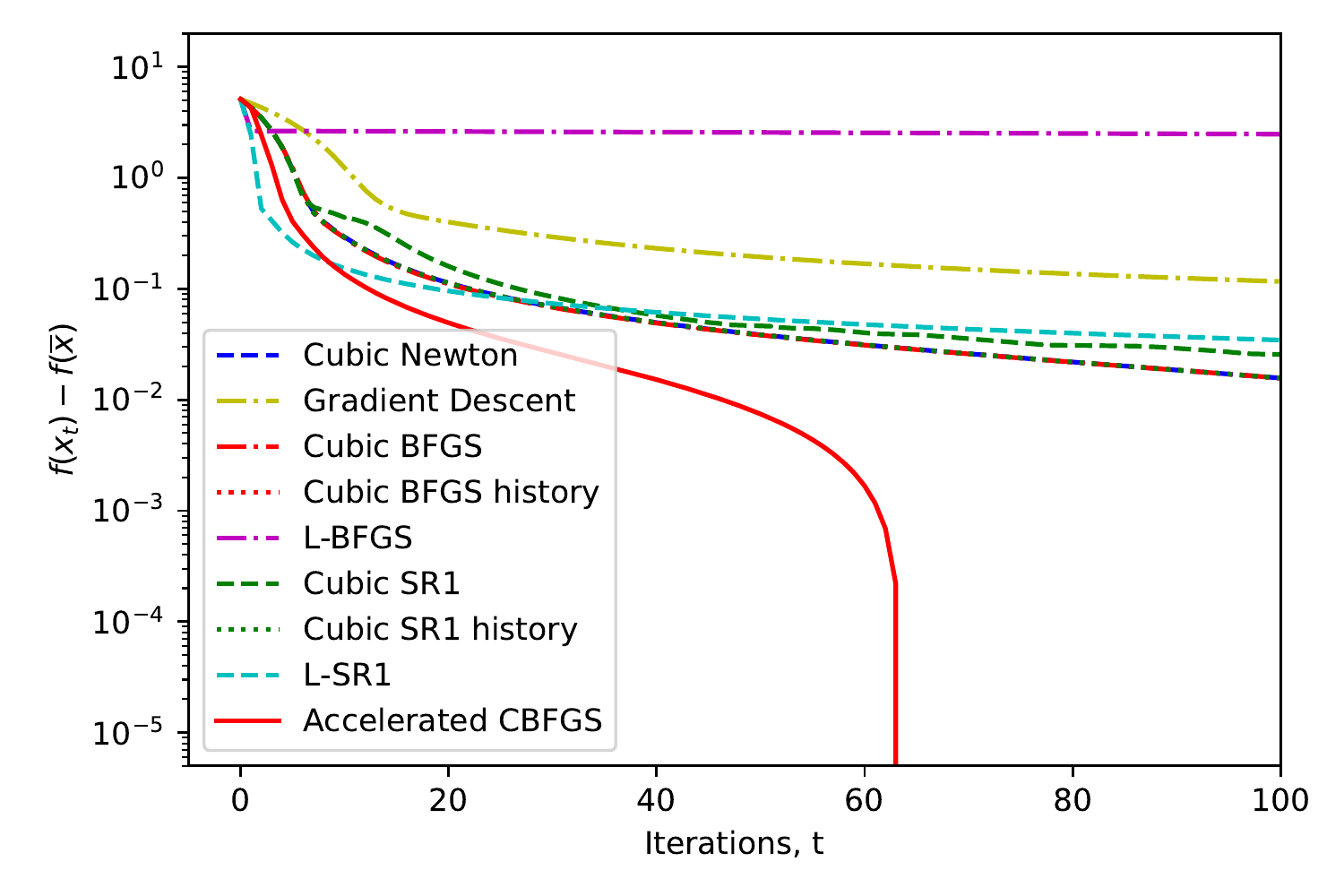}
  \includegraphics[width=0.45\linewidth]{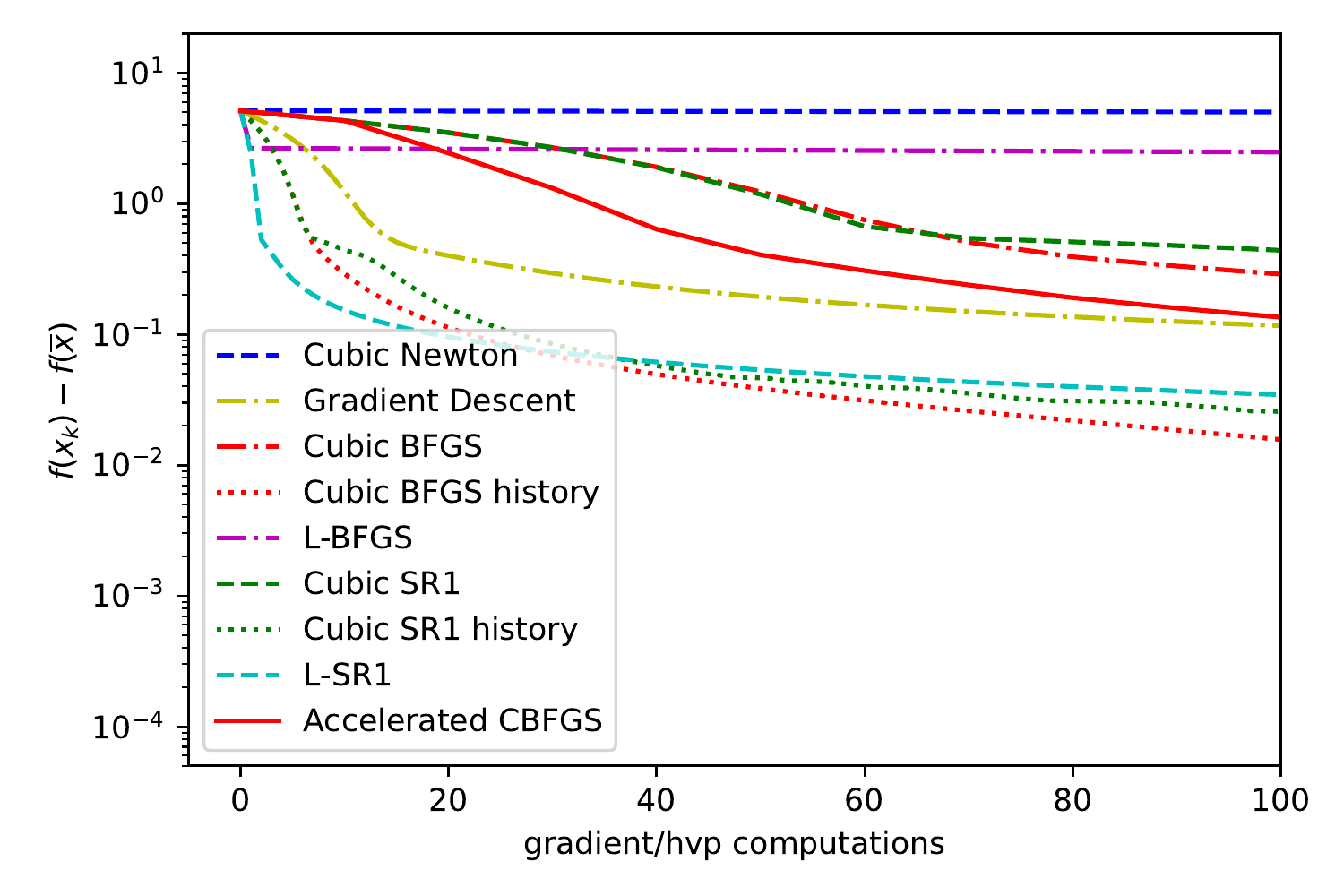}
  \caption{Comparison of \QN methods and \CRN methods for \texttt{MNIST} dataset using theoretical parameters in the convex case.}
  \label{fig:mnist_theo_cv}
\end{figure}
\begin{figure}
\vspace{-0.35cm}
  \centering
  \includegraphics[width=0.45\linewidth]{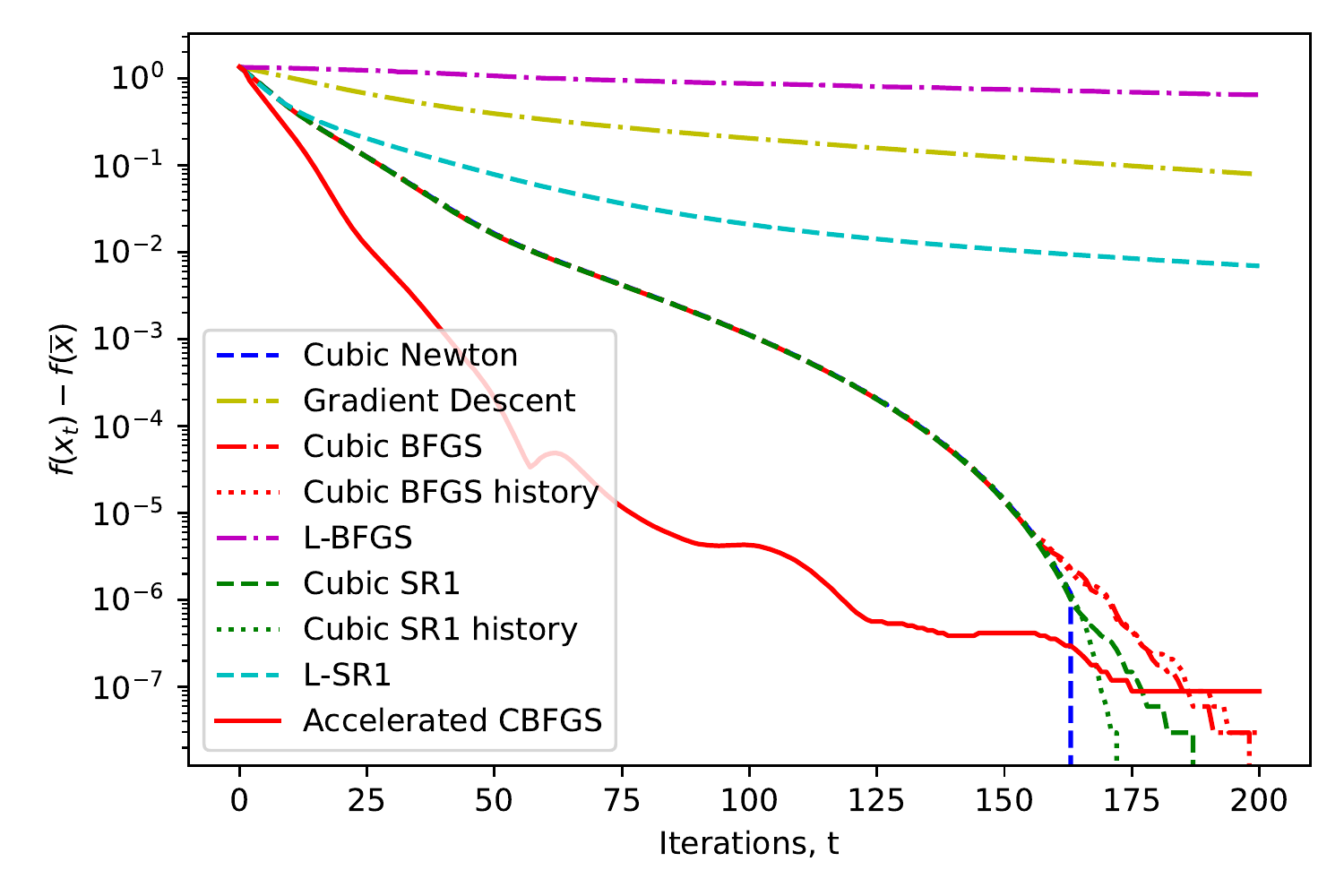}
  \includegraphics[width=0.45\linewidth]{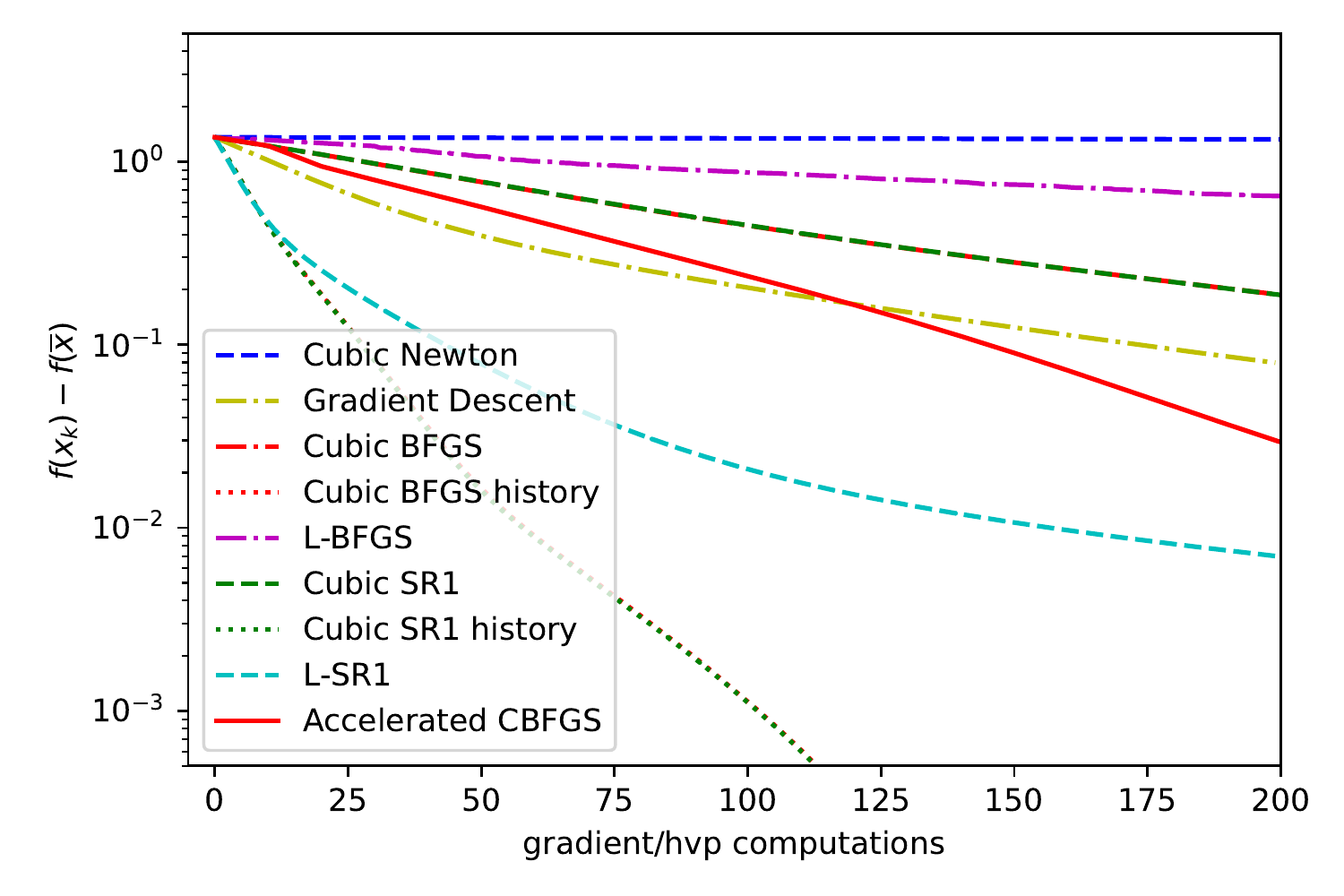}

  \caption{Comparison of \QN methods and \CRN methods for \texttt{MNIST} dataset using theoretical parameters in strongly convex case.}
  \label{fig:mnist_theo}
\vspace{-0.2cm}
\end{figure}

\paragraph{Results.}
Proposed Cubic \QN methods outperform classical non-regularized \QN methods, which suffer from the starting point far from the solution.  For convex case in Figure~, Cubic \QN iterations almost exactly match \CRN iterations. However, \CRN requires $d$ HVP per iteration, which makes it the slowest method in terms of gradient/HVP computations. For strongly convex case in Figure~\ref{fig:mnist_theo}, one can see, that convergence of the Cubic \QN method matches the convergence of exact \CRN until the very last iterations. This means that the discrepancies between the methods begin only in the area of quadratic convergence, that hard to match. We highlight that this behavior perfectly supports the theory, that Cubic \QN has the same convergence rate as \CRN on the initial iterations.
Sample methods perform slightly better in terms of iteration, but are much worse in gradient/hessian-vector product computations. Indeed, sampled methods require extra $m$ Hessian-vector products, which makes iteration more expensive. We suppose that the combination of these two approaches might be better in practice. At first, one can use cheap history \QN updates and sample HVP for a more precise approximation.
Figures~\ref{fig:mnist_theo_cv},\ref{fig:mnist_theo} show that Accelerated \QN is much faster than non-accelerated methods.

\subsection*{Acknowledgement}
The authors are grateful to Alexander Gasnikov and Pavel Dvurechensky for valuable comments and discussions. 

\bibliographystyle{icml2023}
\bibliography{bibliography, kamzolov_bib}

%%%%%%%%%%%%%%%%%%%%%%%%%%%%%%%%%%%%%%%%%%%%%%%%%%%%%%%%%%%%%%%%%%%%%%%%%%%%%%%
%%%%%%%%%%%%%%%%%%%%%%%%%%%%%%%%%%%%%%%%%%%%%%%%%%%%%%%%%%%%%%%%%%%%%%%%%%%%%%%
% APPENDIX
%%%%%%%%%%%%%%%%%%%%%%%%%%%%%%%%%%%%%%%%%%%%%%%%%%%%%%%%%%%%%%%%%%%%%%%%%%%%%%%
%%%%%%%%%%%%%%%%%%%%%%%%%%%%%%%%%%%%%%%%%%%%%%%%%%%%%%%%%%%%%%%%%%%%%%%%%%%%%%%

\newpage
\appendix

\section{Proofs of Section \ref{sec:inexact_cubic}}
In this Section, we present proofs for the Inxact \CRN method. We begin from two helpful lemmas on closeness of the function $f(x)$ to inexact Taylor approximation $\phi_{x}(y)$.
\begin{lemma}
\label{lem:cubic_bound_acc}
For the function $f(x)$ with $L_2$-Lipschitz-continuous Hessian and $B_x$ is $\delta_x^y$-inexact Hessian for $x,y \in \R^d$, we have
\begin{equation}
\label{eq:grad_model_bound}
    \|\nabla \phi_{x}(y) - \nabla f(y)\| 
        \leq
        \delta_x^y \|y - x\| + \frac{L_2}{2}\|y - x\|^{2}.
\end{equation}
\end{lemma}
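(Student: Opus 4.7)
The plan is to obtain the bound by the classical triangle-inequality decomposition: introduce the exact second-order Taylor gradient as an intermediate quantity, then control one piece by the Lipschitz Hessian assumption and the other piece by the directional inexactness assumption.

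First I would compute the gradient of the inexact Taylor model explicitly. From the definition $\phi_x(y) = f(x) + \langle \nabla f(x), y-x\rangle + \tfrac{1}{2}\langle B_x(y-x), y-x\rangle$, differentiating in $y$ gives $\nabla \phi_x(y) = \nabla f(x) + B_x(y-x)$. I would then add and subtract $\nabla^2 f(x)(y-x)$ and apply the triangle inequality to obtain
\begin{equation*}
\|\nabla \phi_x(y) - \nabla f(y)\| \;\leq\; \bigl\|\nabla f(y) - \nabla f(x) - \nabla^2 f(x)(y-x)\bigr\| + \bigl\|(\nabla^2 f(x) - B_x)(y-x)\bigr\|.
\end{equation*}

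Next I would bound each term separately. The second term is handled directly by Assumption \ref{ass:direct_inexactness}, yielding $\delta_x^y\|y-x\|$. For the first term I would use the standard integral representation $\nabla f(y) - \nabla f(x) = \int_0^1 \nabla^2 f(x + \tau(y-x))(y-x)\,d\tau$, rewrite the residual as $\int_0^1 [\nabla^2 f(x+\tau(y-x)) - \nabla^2 f(x)](y-x)\,d\tau$, and apply $L_2$-Lipschitz continuity of the Hessian inside the integral to get the bound $\int_0^1 L_2 \tau \|y-x\|^2 d\tau = \tfrac{L_2}{2}\|y-x\|^2$. Summing the two bounds gives the claimed inequality.

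There is no real obstacle here; the lemma is a routine composition of the standard quadratic Taylor remainder estimate and the new Assumption \ref{ass:direct_inexactness}. The only thing worth being careful about is the direction of the inexactness: since Assumption \ref{ass:direct_inexactness} is stated only along the vector $y-x$, the decomposition must be chosen so that $(\nabla^2 f(x) - B_x)$ acts precisely on $y-x$, which is exactly what happens after subtracting the exact quadratic term. This is why the statement naturally involves $\delta_x^y$ rather than an operator-norm quantity.
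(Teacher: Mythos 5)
Your proof is correct and follows essentially the same route as the paper: both decompose $\nabla\phi_x(y)-\nabla f(y)$ via the exact quadratic model's gradient $\nabla f(x)+\nabla^2 f(x)(y-x)$, bound the Taylor remainder by $\tfrac{L_2}{2}\|y-x\|^2$, and bound the remaining piece $(\nabla^2 f(x)-B_x)(y-x)$ by Assumption \ref{ass:direct_inexactness}. You merely spell out the integral form of the remainder estimate that the paper leaves implicit.
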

\begin{proof}
    \begin{align*}
         \|\nabla \phi_{x}(y) - \nabla f(y)\| &= \|\nabla \phi_{x}(y) - \nabla \Phi_{x}(y) +\nabla \Phi_{x}(y)  - \nabla f(y)\| \\
         &\leq \|\nabla \phi_{x}(y) - \nabla\Phi_{x}(y)\| +\|\nabla\Phi_{x}(y)  - \nabla f(y)\| \\
         &=\| (\nabla^2 f(x) - B_{x})(x- x)\| +\|\Phi_{x}(y)  - \nabla f(y)\| \\
            &\stackrel{\eqref{eq:delta_norm}}{\leq} \delta_x^y \|y - x\| + \frac{L_2}{2}\|y - x\|^{2}
    \end{align*}
\end{proof}

\begin{lemma}\label{lem:monotonicity}
    For the function $f(x)$ with $L_2$-Lipschitz-continuous Hessian, for the Inexact \CRN operator \eqref{eq:inexact_cubic_operator}
    \begin{equation}
    \label{eq:RCN_subproblem_appendix}
        y = \argmin_{ z \in \R^d} \left\lbrace f(x)+ \la\nabla f(x),z-x\ra   + \tfrac{1}{2}\la B_{x} (z-x),z-x\ra + \tfrac{M}{6}\|z-x\|^3 + \tfrac{\delta}{2}\|z-x\|^2\right\rbrace
    \end{equation}
    with $B_x$ as $\delta_x^y$-inexact Hessian 
    \begin{equation}
    \label{eq:subproblem_globalization}
        |f(y) - \phi_{x}(y)| \leq \frac{L_2}{6}\|y-x\|^3 + \frac{\delta_x^{y}}{2}\|y-x\|^2
    \end{equation}
    The Inexact \CRN step $y=S_{M,\delta}(x)$ with $\delta\geq \delta_x^y$, and $M\geq L_2$ is  monotone.
        \begin{equation}
        \label{eq:monotone}
        f(y)\leq f(x)
    \end{equation}
\end{lemma}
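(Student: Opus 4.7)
The plan is to prove the two claims in sequence, since the monotonicity follows cleanly from the approximation bound combined with the optimality of the Inexact CRN step.

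For the bound in \eqref{eq:subproblem_globalization}, the plan is to insert the exact Taylor polynomial $\Phi_{x}(y)$ as a pivot and split $|f(y) - \phi_{x}(y)|$ into $|f(y) - \Phi_{x}(y)|$ and $|\Phi_{x}(y) - \phi_{x}(y)|$ via the triangle inequality. The first term is the standard cubic Taylor remainder, bounded by $\frac{L_2}{6}\|y-x\|^3$ using $L_2$-Lipschitz continuity of the Hessian (the classical estimate obtained by integrating the Hessian difference twice). For the second term, observe that $\Phi_{x}(y) - \phi_{x}(y) = \tfrac{1}{2}\langle (\nabla^2 f(x) - B_x)(y-x), y-x\rangle$, so by Cauchy--Schwarz and Assumption~\ref{ass:direct_inexactness},
\begin{equation*}
    |\Phi_{x}(y) - \phi_{x}(y)| \leq \tfrac{1}{2} \|(\nabla^2 f(x) - B_x)(y-x)\| \cdot \|y-x\| \leq \tfrac{\delta_x^y}{2}\|y-x\|^2.
\end{equation*}
Summing the two estimates yields the claim. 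The key feature here is that we exploit directional inexactness precisely because $y-x$ is the only direction appearing in the quadratic form, so the sharper Assumption~\ref{ass:direct_inexactness} applies rather than an operator-norm bound.

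For the monotonicity claim \eqref{eq:monotone}, I would use the optimality of $y = S_{M,\delta}(x)$ as the minimizer of the regularized model in \eqref{eq:RCN_subproblem_appendix}. Comparing the model value at $y$ to its value at $x$ (which equals $f(x)$ since all perturbation terms vanish), we get
\begin{equation*}
    \phi_{x}(y) + \tfrac{M}{6}\|y-x\|^3 + \tfrac{\delta}{2}\|y-x\|^2 \leq f(x).
\end{equation*}
Now apply the upper estimate from \eqref{eq:subproblem_globalization}, namely $f(y) \leq \phi_{x}(y) + \tfrac{L_2}{6}\|y-x\|^3 + \tfrac{\delta_x^y}{2}\|y-x\|^2$, and substitute:
\begin{equation*}
    f(y) \leq f(x) + \tfrac{L_2 - M}{6}\|y-x\|^3 + \tfrac{\delta_x^y - \delta}{2}\|y-x\|^2.
\end{equation*}
Under the assumptions $M \geq L_2$ and $\delta \geq \delta_x^y$, both perturbation terms are non-positive, which gives $f(y) \leq f(x)$.

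Neither step involves a serious obstacle: the first estimate is a clean triangle-inequality decomposition leveraging the definition of $\delta_x^y$-inexactness, and the second is just optimality of the minimizer plus the first estimate. The only mild subtlety worth flagging is that the hypothesis of \eqref{eq:subproblem_globalization} is the \emph{directional} inexactness $\delta_x^y$ evaluated precisely at the pair $(x,y)$ produced by the Inexact CRN step; so when invoking it inside the monotonicity argument one must be sure that the same pair $(x,y)$ is used, which is automatic here since $y = S_{M,\delta}(x)$ is exactly the point at which we are evaluating the bound.
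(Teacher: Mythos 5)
Your proposal is correct and takes essentially the same route as the paper's proof: the same triangle-inequality decomposition through the exact Taylor polynomial $\Phi_x(y)$ combined with Cauchy--Schwarz and the directional inexactness bound for \eqref{eq:subproblem_globalization}, and the same combination of the minimizer's optimality (compared against $z=x$) with $M\geq L_2$ and $\delta\geq\delta_x^y$ for monotonicity. The only difference is cosmetic: you substitute and inspect the sign of the residual terms, whereas the paper chains the inequalities directly.
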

\begin{proof}
We start with the proof of \eqref{eq:subproblem_globalization}:
    \begin{equation*}
\begin{gathered}
    |f(y) - \phi_{x}(y)| \leq 
    |f(y) - \Phi_{x}(y) + \Phi_{x}(y) - \phi_{x}(y) | \leq \frac{L_2}{6}\|y-x\|^3 + |\Phi_{x}(y) - \phi_{x}(y)| \\
    \leq \frac{L_2}{6}\|y-x\|^3 + \tfrac{1}{2} \la (\nabla^2 f(x) - B_x) (y-x), y-x \ra \\
    \leq \frac{L_2}{6}\|y-x\|^3 + \tfrac{1}{2} \left\| (\nabla^2 f(x) - B_x) (y-x)\right\| \|y-x\|\\
    \stackrel{\eqref{eq:delta_norm}}{\leq} 
    \frac{L_2}{6}\|y-x\|^3 + \frac{\delta_x^{y}}{2}\|y-x\|^2.
\end{gathered}
\end{equation*}
Hence, we get an upper-bound for function $f(y)$. This allows us to show that the step $y=S_{M,\delta}(x)$ is monotone.
    \begin{equation*}
\begin{aligned}
    f(y) &\leq \phi_{x}(y) + \frac{L_2}{6}\|y-x\|^3 + \frac{\delta_x^{y}}{2}\|y-x\|^2 \\
    &\leq
    \phi_{x}(y)+ \frac{M}{6}\|y-x\|^3 + \frac{\delta}{2}\|y-x\|^2 \\
 &\stackrel{\eqref{eq:RCN_subproblem_appendix}}{\leq}  \phi_{x}(z)+ \frac{M}{6}\|z-x\|^3 + \frac{\delta}{2}\|z-x\|^2
    \stackrel{z:=x}{\leq}  f(x).
\end{aligned}
\end{equation*}
The last two inequalities hold because $y$ is a minimum of the subproblem \eqref{eq:RCN_subproblem_appendix}.
Finally, we proved that the Inexact \CRN step is monotone.
\end{proof}

The next Lemma characterizes the progress of the inexact cubic step $y=S_{M,\delta}(x)$ from \eqref{eq:inexact_cubic_operator}.

\begin{lemma}\label{lem:inexact_cubic_step_improvement}
    For the convex function $f(x)$ with $L_2$-Lipschitz-continuous Hessian, for the Inexact \CRN operator \eqref{eq:RCN_subproblem_appendix}
    with $B_x$ as $\delta_x^y$-inexact Hessian, $\delta\geq \delta_x^y$, and $M\geq 2L_2$, the following holds
        \begin{equation}
        \label{eq:cubic_step_improvement}
        \langle \nabla f(y), x - y \rangle 
        \geq 
         \min \left\{ \|\nabla f(y)\|^2\left( \tfrac{1}{4\delta }\right), \|\nabla f(y)\|^\frac{3}{2}
        \left( \tfrac{1}{3M}\right)^\frac{1}{2}\right\}.
    \end{equation}
\end{lemma}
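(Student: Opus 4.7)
The plan is to derive the inequality from the first–order optimality of the cubic subproblem together with Assumption on directional inexactness and the $L_2$-Lipschitz Hessian bound. Setting $v := y-x$ and $r := \|v\|$, differentiating the objective in \eqref{eq:RCN_subproblem_appendix} in $z$ at $z=y$ gives the stationarity condition
\[
\nabla f(x) + B_x v + \tfrac{M}{2}r\, v + \delta\, v = 0.
\]
I would then use the standard integral-remainder identity for functions with $L_2$-Lipschitz Hessian to write $\nabla f(y) = \nabla f(x) + \nabla^2 f(x)\,v + \xi$ with $\|\xi\| \leq \tfrac{L_2}{2}r^2$. Substituting $\nabla f(x)$ from the stationarity condition into this expression yields
\[
\nabla f(y) \;=\; \bigl(\nabla^2 f(x) - B_x\bigr)v \;-\; \bigl(\tfrac{M}{2}r + \delta\bigr) v \;+\; \xi ,
\]
which is the key identity and the main technical step; everything afterwards is bookkeeping.

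From this identity I would get a lower bound for $\langle\nabla f(y), x-y\rangle = -\langle\nabla f(y), v\rangle$ by taking the inner product with $-v$, bounding the $(\nabla^2 f(x)-B_x)v$ term using Assumption~\ref{ass:direct_inexactness} together with Cauchy--Schwarz (so it costs at most $\delta_x^y\, r^2 \leq \delta r^2$) and bounding $|\langle\xi,v\rangle| \leq \tfrac{L_2}{2} r^3 \leq \tfrac{M}{4}r^3$. The $\delta r^2$ error cancels exactly against the $+\delta r^2$ produced by the regularizer, and $M \geq 2L_2$ leaves a clean cubic remainder:
\[
\langle \nabla f(y), x-y\rangle \;\geq\; \bigl(\tfrac{M}{2} - \tfrac{L_2}{2}\bigr) r^3 \;\geq\; \tfrac{M}{4}\, r^3.
\]

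In parallel, taking norms in the same identity and using the triangle inequality gives the companion upper bound
\[
\|\nabla f(y)\| \;\leq\; 2\delta\, r + \tfrac{3M}{4} r^2 .
\]
The final step is a two-case analysis depending on which of the two terms on the right dominates. If the linear-in-$r$ term dominates, then $r \gtrsim \|\nabla f(y)\|/\delta$, and substituting into $\tfrac{M}{4}r^3 \geq \tfrac{\delta}{2}r^2 \gtrsim \|\nabla f(y)\|^2/\delta$ yields the first branch of the minimum. If the quadratic-in-$r$ term dominates, then $r \gtrsim \sqrt{\|\nabla f(y)\|/M}$, and substituting into $\tfrac{M}{4} r^3$ gives the second branch $\|\nabla f(y)\|^{3/2}/\sqrt{M}$. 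Up to absolute constants these are exactly the two bounds in \eqref{eq:cubic_step_improvement}; I expect the main obstacle to be matching the precise constants $\tfrac{1}{4\delta}$ and $\tfrac{1}{\sqrt{3M}}$, which will require slightly more careful (non-symmetric) splittings of the $r^3$ budget between the two cases — specifically, absorbing a portion of $\tfrac{M}{4}r^3$ into a $\delta r^2$ lower bound in the regime where $r$ is small, rather than discarding it.
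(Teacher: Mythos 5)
Your key identity $\nabla f(y) = (\nabla^2 f(x)-B_x)v - \bigl(\delta + \tfrac{M}{2}r\bigr)v + \xi$ is exactly the one the paper uses (it is equivalent to the first-order optimality condition $\nabla \phi_x(y) + \zeta(y-x)=0$ with $\zeta = \delta + \tfrac{M}{2}r$ combined with Lemma~\ref{lem:cubic_bound_acc}), and your treatment of the second branch is essentially the paper's Case 2 up to constants. But there is a genuine gap in how you recover the first branch $\tfrac{1}{4\delta}\|\nabla f(y)\|^2$. By pairing the identity with $-v$ and applying Cauchy--Schwarz you only retain $\langle \nabla f(y), x-y\rangle \geq \tfrac{M-L_2}{2}r^3$ (the $(\delta-\delta_x^y)r^2$ term may vanish since $\delta=\delta_x^y$ is allowed), and this cubic-in-$r$ bound is provably too weak in the regime where $\delta$ dominates. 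Concretely, in your first case $2\delta r \geq \tfrac{3M}{4}r^2$, i.e.\ $r \leq \tfrac{8\delta}{3M}$, so $\tfrac{M}{4}r^3 \leq \tfrac{2\delta}{3}r^2$ --- the inequality ``$\tfrac{M}{4}r^3 \geq \tfrac{\delta}{2}r^2$'' in your chain points the wrong way, and your proposed fix (absorbing part of the $r^3$ budget into a $\delta r^2$ bound ``where $r$ is small'') fails for the same reason: $r^3 \gtrsim \delta r^2/M$ requires $r$ \emph{large}, not small. Quantitatively, with $\|\nabla f(y)\|$ of order $\delta r$ and $r \ll \delta/M$ one has $\tfrac{\|\nabla f(y)\|^2}{4\delta} \asymp \delta r^2 \gg Mr^3$, so no choice of absolute constants rescues the argument from your two intermediate facts alone.

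The missing idea is to \emph{square} the residual identity rather than pair it with $v$. Writing $\nabla f(y) + \zeta(y-x) = e$ with $\|e\| \leq \delta_x^y r + \tfrac{L_2}{2}r^2 \leq \zeta r$ and expanding $\|e\|^2$ gives the exact relation
\begin{equation*}
2\zeta\,\langle \nabla f(y), x-y\rangle \;=\; \|\nabla f(y)\|^2 + \zeta^2 r^2 - \|e\|^2 \;\geq\; \|\nabla f(y)\|^2,
\end{equation*}
so that $\langle \nabla f(y), x-y\rangle \geq \tfrac{\|\nabla f(y)\|^2}{2\zeta} \geq \tfrac{\|\nabla f(y)\|^2}{4\delta}$ whenever $\delta \geq \tfrac{M}{2}r$. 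The crucial point is that squaring retains $\|\nabla f(y)\|^2$ exactly instead of routing it through the lossy bound $\|\nabla f(y)\| \leq 2\delta r + \tfrac{3M}{4}r^2$. The paper's Case 2 then keeps the nonnegative surplus $\ls \zeta^2 - \ls\delta_x^y + \tfrac{L_2}{2}r\rs^2\rs\tfrac{r^2}{2\zeta}$ alongside $\tfrac{\|\nabla f(y)\|^2}{2\zeta}$ and balances the two via the inequality $\tfrac{\alpha}{r} + \tfrac{\beta r^3}{3} \geq \tfrac{4}{3}\beta^{1/4}\alpha^{3/4}$ to hit the constant $\tfrac{1}{\sqrt{3M}}$; you should adopt the same device for both branches.
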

\begin{proof}
    For simplicity, we denote $r = \|y - x\|$ and
    \begin{equation}
        \label{big_sum_lemma6}
        \zeta =  \delta + \frac{M}{2}\|y - x\|.
    \end{equation}
    
    By the optimality condition for $y = S_{M,\delta}(x)$ and $\phi_x{y}$ from 
 $\eqref{eq:inexact_taylor}$, we get
    
    \begin{equation}
   \label{eq:opt_cnd_acc}
    \begin{gathered}
    0 = \nabla \phi_{x}(y)+ \delta (y - x)  + \frac{M}{2}\|y - x\|(y - x) \\ \stackrel{\eqref{big_sum_lemma6}}{=} \nabla \phi_{x}(y) + \zeta (y - x).
    \end{gathered}
    \end{equation}
    
Next, we consider $2$ cases depending on which term dominates in the $\zeta$. 

\begin{itemize}
    \item If $\delta \geq \frac{M}{2}\|y-x\|$, then we get the following result.
    We start with getting an upper bound for $\|\nabla \phi_{x}(y) - \nabla f(y)\|$. 
    \begin{equation*}
        \|\nabla \phi_{x}(y) - \nabla f(y)\| 
        \stackrel{\eqref{eq:grad_model_bound}}{\leq} 
         \delta_x^y \|y - x\| + \frac{L_2}{2}\|y - x\|^{2}
         \leq \zeta\|y - x\|.
    \end{equation*}
   Next, from the previous inequality and optimality condition \eqref{eq:opt_cnd_acc}, we get
    \begin{gather*}
        \zeta^2\|y - x\|^2 \geq \|\nabla \phi_{x}(y) - \nabla f(y)\|^2 
        \stackrel{\eqref{eq:opt_cnd_acc}}{=} \left\|\nabla  f(y) +  \zeta (y - x) \right\|^2 \\
        = 2 \langle \nabla f(y), y - x \rangle \zeta + \|\nabla f(y) \|^2  + \zeta^2\|y - x\|^2.
    \end{gather*}
    Hence, 
    \begin{gather*}
        \langle \nabla f(y), x - y \rangle 
        \geq
        \frac{1}{2\zeta}\|\nabla f(y) \|^2\geq \frac{1}{4\delta}\|\nabla f(y) \|^2.
    \end{gather*}
    
    \item If $\delta < \frac{M}{2}\|y-x\|$ dominates the others, then similarly to previous case, we get
    \begin{gather*}
        \ls \delta_x^y + \tfrac{L_2}{2}r \rs^2 r^2 \geq \|\nabla \phi_{x}(y) - \nabla f(y)\|^2 
        \stackrel{\eqref{eq:opt_cnd_acc}}{=} \left\|\nabla  f(y) +  \zeta (y - x) \right\|^2 \\
        = 2 \langle \nabla f(y), y - x \rangle \zeta + \|\nabla f(y) \|^2  + \zeta^2 r^2.
    \end{gather*}
    As a result, we get
    \begin{gather*}
        \langle \nabla f(y), x - y \rangle 
        \geq
        \frac{\|\nabla f(y) \|^2}{2\zeta} + 
       \ls\ls \delta + \tfrac{M}{2}r\rs^2- \ls \delta_x^y + \tfrac{L_2}{2}r\rs^2 \rs \frac{r^2}{2\zeta}\\
       = \frac{\|\nabla f(y) \|^2}{2\zeta}
        + \ls \delta - \delta_x^y + \tfrac{M-L_2}{2}r\rs \ls \delta_x^y + \delta + \tfrac{L_2+M}{2}r\rs  \frac{r^2}{2\zeta}\\
        \geq \frac{\|\nabla f(y) \|^2}{2Mr}
        + \frac{M^2-L_2^2}{4} \frac{r^3}{2M}
        \geq \frac{\|\nabla f(y) \|^2}{2Mr}
        + \frac{3M}{32} r^3 
        \geq \ls\frac{1}{3M}\rs^{\frac{1}{2}}\|\nabla f(y) \|^{\frac{3}{2}},
    \end{gather*}
    where for the last inequality, we use $\tfrac{\alpha}{r} + \tfrac{\beta r^3}{3} \geq \frac{4}{3}\beta^{1/4}\al^{3/4}$.
\end{itemize} 
\end{proof}
Finally, we present the convergence theorem. It is an extended version of Theorem \ref{thm:cubic_convergence}.
\begin{theorem}
Let $f(x)$ be a convex function with respect to the global minimizer $x^{\ast}$, and let $f(x)$ have an $L_2$-Lipschitz-continuous Hessian. Suppose $B_{t}$ is a $(\delta_{x_t}^{x_{t+1}})$-inexact Hessian, and $M\geq 2L_2$, $\delta_{x_t}^{x_{t+1}}\leq \delta_{max}$. The Adaptive Inexact \CRN method performs $T\geq 1$ iterations to find an $\varepsilon$-solution $x_T$ such that $f(x_{T}) - f(x^{\ast})\leq \varepsilon$. The first $T_1$ iterations are performed with \CRN rate while
\begin{equation}
\label{eq:switch-moment-cubic}
    \frac{3M \ls f(x_{t+1}) - f(x^{\ast}) \rs}{16R} \geq \delta_t^2.
\end{equation}
The last $T_2$ iterations are performed with gradient rate, where $T_1+T_2 = T$.
Note, that $f(x_{t+1})$ monotonically decrease and $\delta_t$ monotonically increase, hence the switch happens only once.
The total value of $T$ is bounded by: 
\begin{equation*}
    T = O(1) \max\lb \tfrac{\delta_{T}R^2}{\varepsilon};  \sqrt{\tfrac{MR^3}{\varepsilon}}\rb,
\end{equation*}
where $R~=~\max\limits_{x \in \mathcal{L}} \|x - x^{\ast}\|$ represents the diameter of the level set $\mathcal{L}~=~\lb x \in \R^d : f(x) \leq f(x_0) \rb$.
\end{theorem}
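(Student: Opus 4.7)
The proof proceeds by reducing the theorem to the scalar recursion
\[
F_t - F_{t+1} \;\geq\; \min\!\Bigl\{\tfrac{F_{t+1}^2}{4\delta_t R^2},\; \tfrac{F_{t+1}^{3/2}}{\sqrt{3M}\, R^{3/2}}\Bigr\},\qquad F_t := f(x_t)-f(x^{\ast}),
\]
and then analyzing each arm of the minimum separately. First, by the monotonicity part of Lemma~\ref{lem:monotonicity} every iterate lies in the level set $\mathcal{L}$, so $\|x_t-x^{\ast}\|\leq R$ for all $t$. Second, the adaptive \textsc{while}-loop only exits once the bound of Lemma~\ref{lem:inexact_cubic_step_improvement} is satisfied with the current $\delta_t$, so every accepted $x_{t+1}$ obeys
\[
\langle \nabla f(x_{t+1}), x_t - x_{t+1}\rangle \;\geq\; \min\!\Bigl\{\tfrac{\|\nabla f(x_{t+1})\|^2}{4\delta_t},\; \tfrac{\|\nabla f(x_{t+1})\|^{3/2}}{\sqrt{3M}}\Bigr\}.
\]
Combining this with the two standard convexity inequalities $f(x_t)-f(x_{t+1}) \geq \langle \nabla f(x_{t+1}), x_t - x_{t+1}\rangle$ and $F_{t+1} \leq \|\nabla f(x_{t+1})\|\,\|x_{t+1}-x^{\ast}\| \leq R \|\nabla f(x_{t+1})\|$ produces the announced master recursion.

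Next, I would split the run into phases according to which arm of the $\min$ is active. The cubic arm dominates exactly when $\|\nabla f(x_{t+1})\|^{3/2}/\sqrt{3M} \leq \|\nabla f(x_{t+1})\|^2/(4\delta_t)$, i.e., $16\delta_t^2 \leq 3M\|\nabla f(x_{t+1})\|$; using $\|\nabla f(x_{t+1})\| \geq F_{t+1}/R$ this is implied by the condition $3MF_{t+1}/(16R) \geq \delta_t^2$ stated in the theorem. Since $F_t$ is non-increasing by Lemma~\ref{lem:monotonicity} and $\delta_t$ is non-decreasing because the inner loop only inflates it, this condition can flip from true to false at most once, and the run genuinely decomposes into a cubic phase of length $T_1$ followed by a gradient phase of length $T_2$.

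For the cubic phase the working recursion is $F_t-F_{t+1} \geq c_1 F_{t+1}^{3/2}$ with $c_1 = 1/(\sqrt{3M}\,R^{3/2})$. This is exactly the form behind Nesterov's classical cubic-regularization analysis: after the standard dichotomy on whether $F_{t+1}\geq F_t/2$, a telescoping of $u_t := F_t^{-1/2}$ yields $u_{T_1} \geq u_0 + \Omega(c_1 T_1)$, and solving for the smallest $T_1$ with $F_{T_1}\leq\varepsilon$ produces $T_1 = O\bigl(\sqrt{MR^3/\varepsilon}\bigr)$. For the gradient phase the recursion is $F_t-F_{t+1} \geq c_2 F_{t+1}^2$ with $c_2 = 1/(4\delta_t R^2) \geq 1/(4\delta_T R^2)$, where the last inequality uses the monotonicity of $\delta_t$ and is precisely what lets us express the final rate in the terminal value $\delta_T$ rather than in the time-varying $\delta_t$.

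The main technical subtlety, and the step I expect to require the most care, is that the gradient-phase recursion has $F_{t+1}^2$ rather than the more convenient $F_t^2$ on the right-hand side, so the textbook one-line telescoping $1/F_{t+1} - 1/F_t \geq c_2/2$ does not apply directly. I would handle this with the usual dichotomy at each gradient-phase step: either $F_{t+1} \geq F_t/2$, in which case a short calculation with $1/F_{t+1}-1/F_t = (F_t-F_{t+1})/(F_t F_{t+1})$ and the recursion yields $1/F_{t+1} - 1/F_t \geq c_2/2$, or $F_{t+1} < F_t/2$, in which case $F$ halves. The halving steps contribute only $O(\log(F_0/\varepsilon))$ iterations, while telescoping over the non-halving steps yields $T_2 = O(\delta_T R^2/\varepsilon)$. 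Adding $T_1+T_2$ and absorbing the logarithm into the constant gives the claimed $T = O(1)\max\{\delta_T R^2/\varepsilon,\; \sqrt{MR^3/\varepsilon}\}$.
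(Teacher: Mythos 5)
Your proposal is correct and follows essentially the same skeleton as the paper's proof: the same master recursion obtained from Lemma~\ref{lem:inexact_cubic_step_improvement}, convexity, and the level-set bound $\|x_{t+1}-x^{\ast}\|\leq R$; the same observation that monotonicity of $f(x_t)$ and of $\delta_t$ forces the switch condition to flip at most once; and the same two-phase accounting. The only genuine point of divergence is how the scalar recursions $\xi_t-\xi_{t+1}\geq \xi_{t+1}^{1+\alpha}$ are solved: the paper invokes Lemma A.1 of Nesterov's inexact tensor-methods paper as a black box, whereas you resolve both phases by hand via the halving dichotomy and telescoping of $F_t^{-1/2}$ (resp.\ $F_t^{-1}$). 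Your route is more self-contained but produces additional $O(\log(F_0/\varepsilon))$ halving steps; your phrase ``absorbing the logarithm into the constant'' is not quite right, since this term depends on $\varepsilon$ --- the correct justification is that $\log(F_{T_1}/\varepsilon)$ is dominated by the main terms (using $F_{T_1}\leq 4\delta_T R^2$ and $f(x_1)-f(x^{\ast})\leq 3MR^3$, which the paper derives from the switch condition, together with $\log x\leq \sqrt{x}\leq x$). With that one clarification the argument is complete and yields the same bound.
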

\begin{proof}
    First, from Lemma \ref{lem:inexact_cubic_step_improvement} and Equation \eqref{eq:cubic_step_improvement}, the condition on Line $4$ of Algorithm \ref{alg:adaptive_cubic} is reachable and the line-search has a finite number of iterations bounded by $\log_{\gamma_{inc}}\ls\frac{\delta_{max}}{\delta_0}\rs$. Hence, by Equation \eqref{eq:cubic_step_improvement} and condition on Line $4$ of Algorithm \ref{alg:adaptive_cubic},we get the next bound for each step:
    \begin{equation}
    \label{eq:hehe31}
    \langle \nabla f(x_{t+1}), x_{t} - x_{t+1} \rangle 
        \geq 
         \min \left\{ \|\nabla f(x_{t+1})\|^2\left( \tfrac{1}{4\delta_t }\right), \|\nabla f(x_{t+1})\|^\frac{3}{2}
        \left( \tfrac{1}{3M}\right)^\frac{1}{2}\right\}.
    \end{equation}
By convexity, we get 
\begin{equation}
\label{eq:cubic_mono}
\begin{aligned}
    f(x_t)-f(x_{t+1}) &\geq\langle \nabla f(x_{t+1}), x_{t} - x_{t+1} \rangle\\
        &\geq 
         \min \left\{ \|\nabla f(x_{t+1})\|^2\left( \tfrac{1}{4\delta_t }\right), \|\nabla f(x_{t+1})\|^\frac{3}{2}
        \left( \tfrac{1}{3M}\right)^\frac{1}{2}\right\}
        \geq 0.
\end{aligned}
\end{equation}
It proofs that the convergence is monotone.

Furthermore, by convexity, we get
\begin{align*}
    f(x^{\ast})&\geq f(x_{t+1}) + \langle \nabla f(x_{t+1}), x^{\ast} - x_{t+1} \rangle\\ &\geq f(x_{t+1}) - \| \nabla f(x_{t+1})\| \cdot \| x^{\ast} - x_{t+1} \|.
\end{align*}
Hence
\begin{equation}
\label{eq:hehe2}
\| \nabla f(x_{t+1})\| \geq \frac{f(x_{t+1}) - f(x^{\ast})}{\| x^{\ast} - x_{t+1} \|} \geq \frac{f(x_{t+1}) - f(x^{\ast})}{R},
\end{equation}
where in the last inequality we used the fact that $\|x_{t+1}-x^{\ast}\| \leq R$, because $R~=~\max\limits_{x \in \mathcal{L}} \|x - x^{\ast}\|$ represents the diameter of the level set $\mathcal{L}~=~\lb x \in \R^d : f(x) \leq f(x_0) \rb$. 

By uniting \eqref{eq:cubic_mono} and \eqref{eq:hoho}, we show that 
\begin{equation}
\label{eq:hoho}
\begin{aligned}
    f(x_t)-f(x_{t+1}) &\geq\langle \nabla f(x_{t+1}), x_{t} - x_{t+1} \rangle\\
        &\geq 
         \min \left\{ \|\nabla f(x_{t+1})\|^2\left( \tfrac{1}{4\delta_t }\right), \|\nabla f(x_{t+1})\|^\frac{3}{2}
        \left( \tfrac{1}{3M}\right)^\frac{1}{2}\right\}\\
        &\geq 
         \min \left\{ \ls\frac{f(x_{t+1}) - f(x^{\ast})}{R}\rs^2\left( \tfrac{1}{4\delta_t }\right), \ls \frac{f(x_{t+1}) - f(x^{\ast})}{R}\rs^\frac{3}{2}
        \left( \tfrac{1}{3M}\right)^\frac{1}{2}\right\}
        \end{aligned}
    \end{equation}
Now, we get to cases depending on $\delta_t$ and $f(x_{t+1})-f(x^{\ast})$ values.

 \textbf{Case 1.} \textit{The first $T_1$ iterations.}  For all $T_1\geq t\geq 0$, we have the next condition satisfied 
    \begin{equation*}
        \frac{3M \ls f(x_{t+1}) - f(x^{\ast}) \rs}{16R} \geq \delta_t^2.    
    \end{equation*}
When we apply the condition to \eqref{eq:hoho}, we get:
\begin{align*}
    f(x_t)-f(x_{t+1}) 
        &\geq 
         \min \left\{ \ls\frac{f(x_{t+1}) - f(x^{\ast})}{R}\rs^2\left( \tfrac{1}{4\delta_t }\right), \ls \frac{f(x_{t+1}) - f(x^{\ast})}{R}\rs^\frac{3}{2}
        \left( \tfrac{1}{3M}\right)^\frac{1}{2}\right\}\\
        &\geq 
          \ls \frac{f(x_{t+1}) - f(x^{\ast})}{R}\rs^\frac{3}{2}
        \left( \tfrac{1}{3M}\right)^\frac{1}{2}.
 \end{align*}
So, for the first $T_1$ iterations, we have \CRN rate.
\begin{gather*}
    f(x_t)-f(x_{t+1}) \geq \ls \frac{f(x_{t+1}) - f(x^{\ast})}{R}\rs^\frac{3}{2}
        \left( \tfrac{1}{3M}\right)^\frac{1}{2}\\
        \ls f(x_t) - f(x^{\ast}) \rs - \ls f(x_{t+1}) - f(x^{\ast}) \rs \geq \ls \frac{f(x_{t+1}) - f(x^{\ast})}{R}\rs^\frac{3}{2}
        \left( \tfrac{1}{3M}\right)^\frac{1}{2}\\
        \frac{f(x_t) - f(x^{\ast})}{3MR^3}  - \frac{f(x_{t+1}) - f(x^{\ast})}{3MR^3} \geq \ls \frac{f(x_{t+1}) - f(x^{\ast})}{3MR^3}\rs^\frac{3}{2}\\
        \xi_t - \xi_{t+1}\geq \xi_{t+1}^{1+\frac{1}{2}},
\end{gather*}
where $\xi_t = \frac{f(x_t) - f(x^{\ast})}{3MR^3}$.

Lemma A.1 from \cite{nesterov2019inexact} guarantee that if we have the following condition for $\xi_t$: 
\begin{equation}
\label{eq:nesterov_sequence_trick}
    \xi_t - \xi_{t+1}\geq \xi_{t+1}^{1+\al}
\end{equation}
then
\begin{equation*}
    \xi_{t}\leq \left[ \ls 1 + \frac{1}{\al} \rs \ls (1 +\xi_0^\al \rs \cdot \frac{1}{t}\right]^{\frac{1}{\al}}.
\end{equation*}

By applying it to our case with $\al = \frac{1}{2}$, we get 
    \begin{gather*}
    \xi_{t+1}\leq \left[ 3  \ls 1 +\ls\frac{f(x_1) - f(x^{\ast})}{3MR^3}\rs^{\frac{1}{2}} \rs \cdot \frac{1}{t}\right]^{2}\\
    \frac{f(x_{t+1}) - f(x^{\ast})}{3MR^3} \leq  18  \ls 1 +\ls\frac{f(x_1) - f(x^{\ast})}{3MR^3}\rs \rs \cdot \frac{1}{t^2}\\
    f(x_{t+1}) - f(x^{\ast}) \leq   \ls 3MR^3 +\ls f(x_1) - f(x^{\ast})\rs \rs \cdot \frac{18}{t^2}
\end{gather*}
By \eqref{eq:hehe2} and \eqref{eq:hehe31}, one can show that $f(x_1) - f(x^{\ast}) \leq 3MR^3$, hence 
\begin{gather*}
    f(x_{T_1+1}) - f(x^{\ast}) \leq  \frac{108MR^3}{t^2}
\end{gather*}
or
\begin{equation}
    \label{eq:final_case1_cubic}
    T_1 = O\ls \sqrt{\frac{MR^3}{\varepsilon}}\rs 
\end{equation}

\textbf{Case 2.} \textit{The last $T_2$ iterations.}  For all $T_2\geq t\geq T_1\geq0$, we have the next condition satisfied 
    \begin{equation*}
        \frac{3M \ls f(x_{t+1}) - f(x^{\ast}) \rs}{16R} \leq \delta_t^2.  
    \end{equation*}
When we apply the condition to \eqref{eq:hoho}, we get:
\begin{align*}
    f(x_t)-f(x_{t+1}) 
        &\geq 
         \min \left\{ \ls\frac{f(x_{t+1}) - f(x^{\ast})}{R}\rs^2\left( \tfrac{1}{4\delta_t }\right), \ls \frac{f(x_{t+1}) - f(x^{\ast})}{R}\rs^\frac{3}{2}
        \left( \tfrac{1}{3M}\right)^\frac{1}{2}\right\}\\
        &\geq 
          \ls\frac{f(x_{t+1}) - f(x^{\ast})}{R}\rs^2\left( \tfrac{1}{4\delta_t }\right)\\
        &\geq 
          \ls\frac{f(x_{t+1}) - f(x^{\ast})}{R}\rs^2\left( \tfrac{1}{4\delta_T }\right)  .
 \end{align*}
So, for the last $T_2$ iterations, we have gradient method's rate.
\begin{gather*}
    f(x_t)-f(x_{t+1}) \geq \ls\frac{f(x_{t+1}) - f(x^{\ast})}{R}\rs^2\left( \tfrac{1}{4\delta_T }\right)\\
        \ls f(x_t) - f(x^{\ast}) \rs - \ls f(x_{t+1}) - f(x^{\ast}) \rs \geq \ls\frac{f(x_{t+1}) - f(x^{\ast})}{R}\rs^2\left( \tfrac{1}{4\delta_T }\right)\\
        \frac{f(x_t) - f(x^{\ast})}{4\delta_T R^2}  - \frac{f(x_{t+1}) - f(x^{\ast})}{4\delta_T R^2} \geq \ls\frac{f(x_{t+1}) - f(x^{\ast})}{4\delta_T R^2}\rs^2\\
        \xi_t - \xi_{t+1}\geq \xi_{t+1}^{1+1},
\end{gather*}
where $\xi_t = \frac{f(x_t) - f(x^{\ast})}{4\delta_T R^2}$.

By applying \eqref{eq:nesterov_sequence_trick} with $\al = 1$, we get 
    \begin{gather*}
    \xi_{T}\leq 2  \ls 1 +\frac{f(x_{T_1}) - f(x^{\ast})}{4\delta_{T} R^2} \rs \cdot \frac{1}{T_2}\\
    \frac{f(x_{T}) - f(x^{\ast})}{4\delta_{T} R^2} \leq  2  \ls 1 +\frac{f(x_{T_1}) - f(x^{\ast})}{4\delta_{T} R^2} \rs \cdot \frac{1}{T_2}\\
    f(x_{T}) - f(x^{\ast}) \leq    4\delta_{T} R^2 +\ls f(x_{T_1}) - f(x^{\ast})\rs  \cdot \frac{2}{T_2}
\end{gather*}
By \eqref{eq:hehe2} and \eqref{eq:hehe31}, one can show that $f(x_{T_1}) - f(x^{\ast}) \leq 4\delta_{T} R^2$, hence 
\begin{gather*}
    f(x_{T}) - f(x^{\ast}) \leq  \frac{16\delta_{T} R^2}{T_2}
\end{gather*}
or
\begin{equation}
    \label{eq:final_case2_cubic}
    T_2 = O\ls \frac{\delta_{T} R^2}{\varepsilon}\rs 
\end{equation}
Finally, by uniting \eqref{eq:final_case1_cubic} and \eqref{eq:final_case2_cubic}, we prove the Theorem.
\end{proof}

Let us discuss this result. To reach results from Corollary \ref{cor:controllable_inexactness}, we have to stay in Case 1 for all iterations. From the proof, one can see, that if $\delta_t=O\ls\frac{MR}{t}\rs$ then we stay in Case 1 for all iteration of the method. It means that first iterations could be done less precise than the last one.

\section{Adaptive Accelerated Inexact Cubic Newton for convex functions}

\begin{algorithm}
  \caption{Adaptive Accelerated Inexact Cubic Newton, Full version}\label{alg:inexact_acc_detailed}
  \begin{algorithmic}[1]
      \STATE \textbf{Input:} $y_0 = x_0$ is starting point; constants $M \geq 2L_2$; increase multiplier $\gamma_{inc}$; 
     starting inexactness $\delta_0  \geq 0$; non-negative non-decreasing sequences $\{\kappa_2^t\}_{t \geq 0}$, $\{\kappa_3^t\}_{t \geq 0}$, and
      \begin{equation}\label{eq:alphas}
          \alpha_t = \frac{3}{t + 3}, ~~~ A_t = \prod \limits_{j=1}^t(1 -\alpha_j), ~~~ A_0 = 1,
      \end{equation}
      \begin{equation}\label{eq:psi_0}
        \psi_{0}(x):=\sum \limits_{i = 2}^{3} \frac{\bk_i^{0}}{i}\|x - x_0\|^i.
    \end{equation}
    \WHILE{$t \geq 0$} 
        \STATE 
            \begin{equation}\label{eq:u_t}
                v_t = (1 - \alpha_t)x_t + \alpha_t y_t,
            \end{equation}
            
        \STATE $x_{t+1} = S_{M,\delta_t}(v_{t})$
                \WHILE{$\langle \nabla f(x_{t+1}), v_t - x_{t+1} \rangle 
        \leq 
        \min \left\{ \frac{\|\nabla f(x_{t+1})\|^2}{4\delta_t }, 
         \frac{\|\nabla f(x_{t+1})\|^\frac{3}{2}}{(3M)^\frac{1}{2}}\right\}$}
			     \STATE $\delta_t= \delta_t\gamma_{inc}$
			     \STATE $x_{t+1} = S_{M,\delta_t}(v_{t})$
                \ENDWHILE
        \STATE $\bk^{t+1}_2 = \frac{4\delta_t\al_t^2}{A_t}$
        \STATE Compute 
            \begin{equation}\label{eq:estimating_seq}
            \begin{aligned}
                y_{t+1}=\arg \min _{x \in \mathbb{R}^{n}}\left\{\psi_{t+1}(x):=\psi_{t}(x)+ \sum \limits_{i = 2}^{3} \frac{\bk^{t+1}_i - \bk^{t}_i}{i}\|x - x_0\|^i
                +\frac{\alpha_{t}}{A_{t}} 
                l(x,x_{t+1}) \right\}.
                \end{aligned}
            \end{equation}
        \IF{$\|y_{t+1} - y_{t}\| > \|y_{t+1} - x_0\|$ or $\bk^{t}_2 \geq \frac{2\delta_t\al_t^2}{A_t}$} 
            \STATE $\delta_{t-1}= \delta_t$
            \STATE Move back to line $8$ of previous step and recompute everything from $y_t$; $t=t-1$
        \ENDIF
        \STATE $\delta_{t+1} = \delta_t$
        \STATE $t=t+1$
    \ENDWHILE
  \end{algorithmic}
\end{algorithm}
Algorithm \ref{alg:inexact_acc_detailed} is detailed version of Algorithm \ref{alg:inexact_acc}.It is helping to better understand the algorithm and proof.
In full version, we add some additional safety conditions to check: lines $10-12$. Due to the complexity of proofs and theory, it is hard to prove that condition
\begin{equation}
    \label{eq:condition_acc}
    \|y_{t+1} - y_{t}\| \geq \|y_{t+1} - x_0\| \quad \textit{or} \quad  \bk^{t}_2 \geq \frac{2\delta_t\al_t^2}{A_t}
\end{equation}
holds for all $t$. We've add it as a check to make method computationally faster. It comes from adaptivity part and does not change the theoretical convergence rate. Moreover, in our experiments, the condition \eqref{eq:condition_acc} is always true. So, we believe that it is fair to add such check in the Algorithm and use it in the proof. 

Algorithm \ref{alg:inexact_acc_detailed} is an adaptive and inexact version of Nesterov's acceleration with the estimating sequence technique. By getting an upper and a lower bound for the estimating sequence $\psi_t(x)$, we prove the convergence theorem (Theorem \ref{thm:acc_convergence}). So, the full proof is organized as follows:
\begin{itemize}
    \item Lemma \ref{lem:upper_seq} provides an upper bound for the estimating sequence  $\psi_t(x)$;
    \item From Lemma \ref{lem:inexact_cubic_step_improvement}, we get the efficiency of Inexact Cubic Newton step $x_{t+1} = S_{M,\delta_t}(v_{t})$, and a guarantee of reaching adaptive condition. 
    \item Lemma \ref{lem:step} provides a lower bound on $\psi_t(x)$ based on results of technical Lemmas
    \ref{lem:dual}- \ref{lm:argmin}; 
    \item Everything is combined together in Theorem \ref{thm:acc_convergence} in order to prove convergence and obtain convergence rate.
\end{itemize}

Let us remind you the full formula for $\psi_t(x)$,
\begin{equation}
    \label{eq:psi_bar}
    \psi_0(x) = \sum \limits_{i=2}^{3} \frac{ \bk^{0}_i}{i}\|x - x_0\|^i; \quad \psi_t(x) =  \sum \limits_{i=2}^{3} \frac{ \bk^{t}_i}{i}\|x - x_0\|^i + \sum \limits_{j = 0}^{t - 1} \frac{\alpha_j}{A_j}l(x,x_{j+1}), \, \forall t\geq 1.
\end{equation}

The following Lemma shows that the sequence of functions $\psi_t(x)$ can be upper bounded by the properly regularized objective function.

\begin{lemma}\label{lem:upper_seq}
    For convex function $f(x)$ with solution solution $x^{\ast} \in \mathbb{R}^n$ and $\psi_t(x)$ from \eqref{eq:psi_bar}, we have
    \begin{equation}
        \label{eq:acc_upper_bound}
        \psi_t(x^{\ast}) \leq \frac{f(x^{\ast})}{A_{t - 1}}  + \frac{ \bk_2^{t}}{2}\|x^{\ast} - x_0\|^2 + \frac{\bk^{t}_{3}}{3}\|x^{\ast} - x_0\|^3.
        \end{equation}
\end{lemma}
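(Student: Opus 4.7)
The plan is to unpack the definition of $\psi_t(x^{\ast})$, bound the linear-model terms by $f(x^{\ast})$ via convexity, and then collapse the coefficient sum by a telescoping identity in $\alpha_j/A_j$. The quadratic and cubic terms $\tfrac{\bk_2^t}{2}\|x^{\ast}-x_0\|^2 + \tfrac{\bk_3^t}{3}\|x^{\ast}-x_0\|^3$ appear verbatim on both sides, so nothing needs to be done with them; all the work sits in the linear sum $\sum_{j=0}^{t-1}\tfrac{\alpha_j}{A_j}\,l(x^{\ast}, x_{j+1})$.

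First, by convexity of $f$, the linear model $l(x, y) = f(y) + \langle \nabla f(y), x - y\rangle$ is a global lower bound, so $l(x^{\ast}, x_{j+1}) \leq f(x^{\ast})$ for every $j$. Because the coefficients $\alpha_j/A_j$ are non-negative, this yields
\[
    \sum_{j=0}^{t-1} \frac{\alpha_j}{A_j}\, l(x^{\ast}, x_{j+1}) \;\leq\; f(x^{\ast}) \sum_{j=0}^{t-1} \frac{\alpha_j}{A_j}.
\]
Hence it remains only to prove the identity $\sum_{j=0}^{t-1}\tfrac{\alpha_j}{A_j} = \tfrac{1}{A_{t-1}}$.

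The identity follows directly from the recursion \eqref{eq:alphas}. For $j\geq 1$, $A_j = (1-\alpha_j)A_{j-1}$, which rearranges to $\alpha_j = (A_{j-1} - A_j)/A_{j-1}$, and therefore
\[
    \frac{\alpha_j}{A_j} \;=\; \frac{A_{j-1} - A_j}{A_j A_{j-1}} \;=\; \frac{1}{A_j} - \frac{1}{A_{j-1}}.
\]
Telescoping from $j=1$ to $j=t-1$ gives $\sum_{j=1}^{t-1}\tfrac{\alpha_j}{A_j} = \tfrac{1}{A_{t-1}} - \tfrac{1}{A_0}$. Since $\alpha_0 = 3/3 = 1$ and $A_0 = 1$, the $j=0$ term contributes exactly $1 = 1/A_0$, so the full sum equals $1/A_{t-1}$, as required.

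Putting everything together, $\psi_t(x^{\ast}) = \tfrac{\bk_2^t}{2}\|x^{\ast}-x_0\|^2 + \tfrac{\bk_3^t}{3}\|x^{\ast}-x_0\|^3 + \sum_{j=0}^{t-1}\tfrac{\alpha_j}{A_j} l(x^{\ast}, x_{j+1}) \leq \tfrac{\bk_2^t}{2}\|x^{\ast}-x_0\|^2 + \tfrac{\bk_3^t}{3}\|x^{\ast}-x_0\|^3 + \tfrac{f(x^{\ast})}{A_{t-1}}$, which is exactly \eqref{eq:acc_upper_bound}. There is no real obstacle here: the only subtlety is booking the $j=0$ term correctly, since $\alpha_0 = 1$ is the boundary case of the telescoping identity and is what makes the sum start at $1$ rather than $0$. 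This upper bound will then be paired with a matching lower bound on $\psi_t$ in Lemma \ref{lem:step} to drive the convergence argument of Theorem \ref{thm:acc_convergence}.
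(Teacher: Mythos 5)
Your proof is correct and follows essentially the same route as the paper's: bound each $l(x^{\ast},x_{j+1})$ by $f(x^{\ast})$ via convexity, then collapse $\sum_{j=0}^{t-1}\alpha_j/A_j$ to $1/A_{t-1}$ by the telescoping identity $\alpha_j/A_j = 1/A_j - 1/A_{j-1}$. The only cosmetic difference is that you handle the $j=0$ boundary term explicitly via $\alpha_0=1$, whereas the paper absorbs it with the convention $1/A_{-1}=0$.
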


\begin{proof}
For $t=0$, let us define $A_{-1}$ such that $\tfrac{1}{A_{-1}}=0$ then $\tfrac{f(x^{\ast})}{A_{-1}}=0$ and
   $$
        \psi_0(x^{\ast}) \leq \sum \limits_{i = 2}^{3} \frac{\bk^0_i}{i}\|x^{\ast} - x_0\|^i$$
    From \eqref{eq:psi_bar}, we have
    \begin{equation}
        \psi_t(x^{\ast}) 
        = \sum \limits_{i = 2}^{3} \frac{\bk^{t }_i}{i}\|x^{\ast} - x_0\|^i + \sum \limits_{j = 0}^{t - 1} \frac{\alpha_j}{A_j} l(x^{\ast},x_{j+1}).
        \label{eq:lem_upper_seq_pr2}
    \end{equation}
From \eqref{eq:alphas}, we have that, for all $j \geq 1$, $A_j = A_{j-1}(1-\alpha_j)$, which leads to $\frac{\alpha_j}{A_j}=\frac{1}{A_j}-\frac{1}{A_{j-1}}$. Hence, we have $\sum \limits_{j = 0}^{t - 1} \frac{\alpha_j}{A_j} = \frac{1}{A_{t-1}} - \frac{1}{A_{-1}}=\frac{1}{A_{t-1}}$ and, using the convexity of the objective $f$, we get
    \begin{gather}
        \sum \limits_{j = 0}^{t - 1} \frac{\alpha_j}{A_j} l(x^{\ast},x_{j+1}) \leq f(x^{\ast})\sum \limits_{j = 0}^{t - 1} \frac{\alpha_j}{A_j} = \frac{f(x^{\ast})}{A_{t-1}} . \label{eq:lem_upper_seq_pr3}
    \end{gather}
    Finally, combining all the inequalities from above, we obtain 
    \begin{gather}
        \psi_t(x^{\ast})  \stackrel{\eqref{eq:lem_upper_seq_pr2},\eqref{eq:lem_upper_seq_pr3}}{\leq}  \frac{f(x^{\ast})}{A_{t - 1}} + \frac{ \bk_2^{t}}{2} \|x^{\ast} - x_0\|^2   + \frac{\bk^{t}_{3}}{3}\|x^{\ast} - x_0\|^3.
    \end{gather}
\end{proof}

The next step is to provide a lower bound $\psi_t(x) \geq \psi_t^{\ast} := \min_x \psi_t(x) \geq \frac{f(x_t)}{A_{t-1}}$ for all $x$. The convergence rate of Algorithm \ref{alg:inexact_acc} will follow from this bound and Lemma \ref{lem:upper_seq}. The proof of the desired lower bound is quite technical and requires several auxiliary lemmas. After that we combine all the technical results together to obtain convergence rate in the proof of Theorem \ref{thm:acc_convergence}. We start the technical derivations with the following result.

\begin{lemma}\label{lm:argmin}
    Let $h(x)$ be a convex function, $x_0 \in \mathbb{R}^n$, $\theta_2, \theta_3 \geq 0$, and 
    $$\bar{x} = \arg \min \limits_{x \in \mathbb{R}^n} \{\bar{h}(x) = h(x) + \frac{\theta_2}{2} \|x -x_0\|^2 + \frac{\theta_3}{3} \|x -x_0\|^3\}.$$ Then, for all $x\in \mathbb{R}^n$,
    
    $$\bar{h}(x) \geq \bar{h}(\bar{x}) + 
    \frac{\theta_2}{2} \|x - \bar{x}\|^2 + \frac{\theta_3}{6} \|x - \bar{x}\|^3.$$
\end{lemma}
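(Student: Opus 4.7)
The plan is to exploit the first-order optimality condition at $\bar{x}$ and combine it with the (uniform) convexity inherent to each of the three pieces of $\bar{h}$. Since $h$ is convex and the regularizers are smooth and convex, $\bar{x}$ being the unconstrained minimizer of $\bar{h}$ is equivalent to the subgradient inclusion
\[
0 \;\in\; \partial h(\bar{x}) + \theta_2 (\bar{x} - x_0) + \theta_3 \|\bar{x}-x_0\|(\bar{x}-x_0),
\]
so there is some $g \in \partial h(\bar{x})$ with $g = -\theta_2(\bar{x}-x_0) - \theta_3\|\bar{x}-x_0\|(\bar{x}-x_0)$. This subgradient is what will kill the cross terms at the end.

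Next, I would bound each summand of $\bar{h}(x)$ from below in terms of its value at $\bar{x}$. For $h$, convexity gives the linear lower bound $h(x) \geq h(\bar{x}) + \langle g, x - \bar{x}\rangle$. For the quadratic term, the identity
\[
\tfrac{\theta_2}{2}\|x-x_0\|^2 = \tfrac{\theta_2}{2}\|\bar{x}-x_0\|^2 + \theta_2\langle \bar{x}-x_0,\, x-\bar{x}\rangle + \tfrac{\theta_2}{2}\|x-\bar{x}\|^2
\]
is immediate and contributes the $\tfrac{\theta_2}{2}\|x-\bar{x}\|^2$ piece exactly. The interesting ingredient is the cubic term, where I need the uniform convexity of degree three
\[
\tfrac{\theta_3}{3}\|x-x_0\|^3 \;\geq\; \tfrac{\theta_3}{3}\|\bar{x}-x_0\|^3 + \theta_3\|\bar{x}-x_0\|\langle \bar{x}-x_0,\, x-\bar{x}\rangle + \tfrac{\theta_3}{6}\|x-\bar{x}\|^3.
\]
This is the only nontrivial step and will be the main obstacle; it is a standard fact about $\tfrac{1}{3}\|\cdot\|^3$ and can be proved either by integrating the Hessian lower bound $\nabla^2 (\tfrac{1}{3}\|z\|^3) \succeq \|z\|\,I$ twice along the segment joining $\bar{x}$ to $x$, or by the elementary scalar inequality $a^3 \geq b^3 + 3b^2(a-b) + \tfrac{1}{2}(a-b)^3$ for $a,b \geq 0$ combined with the triangle inequality applied to $a=\|x-x_0\|$ and $b=\|\bar{x}-x_0\|$ (plus Cauchy--Schwarz to handle the linear cross term).

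Finally, I would sum the three lower bounds. The constant terms reassemble into $\bar{h}(\bar{x})$, and the linear-in-$(x-\bar{x})$ terms combine to
\[
\bigl\langle g + \theta_2(\bar{x}-x_0) + \theta_3\|\bar{x}-x_0\|(\bar{x}-x_0),\; x-\bar{x}\bigr\rangle,
\]
which vanishes by the first-order optimality condition above. What remains is exactly $\tfrac{\theta_2}{2}\|x-\bar{x}\|^2 + \tfrac{\theta_3}{6}\|x-\bar{x}\|^3$, yielding the claimed inequality. The proof is essentially a bookkeeping argument once the degree-three uniform convexity bound is in hand.
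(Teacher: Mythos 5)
Your proposal is correct and follows essentially the same route as the paper: lower-bound $h$ at $\bar{x}$ by a (sub)gradient, lower-bound the quadratic and cubic regularizers by their uniform-convexity inequalities around $\bar{x}$, and cancel the linear terms via the first-order optimality condition; your cubic-term inequality is exactly the $i=3$ case of the bound $d_i(x)-d_i(y)-\langle\nabla d_i(y),x-y\rangle\geq\left(\tfrac12\right)^{i-2}d_i(x-y)$, $d_i(z)=\tfrac1i\|z\|^i$, which the paper simply imports from Lemma 4 of Nesterov (2008). One minor caution: of your two suggested proofs of that key inequality, the Hessian-integration route is the sound one, whereas the scalar inequality $a^3\geq b^3+3b^2(a-b)+\tfrac12(a-b)^3$ with $a=\|x-x_0\|$, $b=\|\bar{x}-x_0\|$ does not directly deliver the remainder $\tfrac16\|x-\bar{x}\|^3$, since $\left(\|x-x_0\|-\|\bar{x}-x_0\|\right)^3$ can be much smaller than $\|x-\bar{x}\|^3$ (the triangle inequality points the wrong way for a lower bound).
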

\begin{proof}
    By using Lemma 4 from \cite{nesterov2008accelerating}, one can see that, for all $x, y \in \mathbb{R}^n$ and for any $i \geq 2$, 
    \begin{equation*}
        d_i(x) - d_i(y) - \langle \nabla d_i(y), x - y \rangle \geq \left(\frac 12 \right)^{i-2}d_i(x - y),
    \end{equation*}
    where $d_i(x)= \tfrac{1}{i}\|x\|^{i}$.
    
    Using the convexity of $h$, we have
    \begin{gather*}
        \bar{h}(x) = h(x) + \sum \limits_{i = 2}^{3} \theta_i d_i(x - x_0) \geq h(\bar{x}) + \langle\nabla h(\bar{x}), x - \bar{x}\rangle + \sum \limits_{i = 2}^{3} \theta_i d_i(x - x_0) \\
        \geq h(\bar{x}) + \langle \nabla h(\bar x), x - \bar x\rangle + \sum \limits_{i = 2}^{3} \theta_i\left( d_i(\bar{x} - x_0) + \langle\nabla d_i(\bar{x} - x_0), x - \bar{x} \rangle + \left(\frac{1}{2}\right)^{i - 2}d_i(x - \bar x)\right)\\
        = \bar h(\bar x) + \langle \nabla \bar h (\bar x), x - \bar x\rangle + \sum \limits_{i = 2}^{3} \left(\frac{1}{2}\right)^{i - 2}\theta_i d_i(x - \bar x) \geq  \bar h(\bar x) +  \sum \limits_{i = 2}^{3} \left(\frac{1}{2}\right)^{i - 2}\theta_i d_i(x - \bar x),
    \end{gather*}
    where the last inequality holds by optimality condition since $\bar h(x)$ is convex.
\end{proof}

We will also use the next technical lemma \cite{nesterov2008accelerating,ghadimi2017second} on Fenchel conjugate for the $p$-th power of the norm.
\begin{lemma}\label{lem:dual}
    Let $g(z)=\frac{\theta}{p}\|z\|^{p}$ for $p \geq 2$ and $g^{*}$ be its conjugate function i.e., $g^{*}(v)=\sup _{z}\{\langle v, z\rangle-$ $g(z)\} .$ Then, we have
$$
g^{*}(v)=\frac{p-1}{p}\left(\frac{\|v\|^{p}}{\theta}\right)^{\frac{1}{p-1}}
$$
Moreover, for any $v, z \in \mathbb{R}^{n}$, we have $g(z)+g^{*}(v)-\langle z, v\rangle \geq 0 .$
\end{lemma}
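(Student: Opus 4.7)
The plan is to compute $g^*(v)$ directly from its defining supremum and then derive the Fenchel--Young inequality as an immediate consequence. First I would reduce the $n$-dimensional supremum to a one-dimensional problem: since $g(z)$ depends on $z$ only through $\|z\|$, the Cauchy--Schwarz inequality $\langle v, z\rangle \leq \|v\|\|z\|$ is saturated by taking $z$ parallel to $v$. So, assuming $v \neq 0$ (the case $v=0$ gives $g^*(0)=0$ trivially), I parametrize $z = t \cdot v/\|v\|$ with $t \geq 0$ and reduce to
\[
    g^*(v) \;=\; \sup_{t \geq 0} \Bigl\{ t\|v\| \;-\; \tfrac{\theta}{p}\, t^{p} \Bigr\}.
\]

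Next I would solve this scalar concave maximization (concave because $p \geq 2$ and $\theta > 0$). The first-order condition $\|v\| - \theta\, t^{p-1} = 0$ yields the unique maximizer $t^{*} = \bigl(\|v\|/\theta\bigr)^{1/(p-1)}$. Substituting back and factoring $\|v\|^{p/(p-1)} \theta^{-1/(p-1)}$ out of both terms gives
\[
    g^*(v) \;=\; \Bigl(1 - \tfrac{1}{p}\Bigr) \Bigl(\tfrac{\|v\|^{p}}{\theta}\Bigr)^{1/(p-1)} \;=\; \tfrac{p-1}{p} \Bigl(\tfrac{\|v\|^{p}}{\theta}\Bigr)^{1/(p-1)},
\]
which is the claimed formula. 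The $v=0$ case is consistent since the right-hand side vanishes when $\|v\|=0$.

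For the second claim, the Fenchel--Young inequality is immediate from the definition of the conjugate: for any fixed $v, z \in \mathbb{R}^n$,
\[
    g^*(v) \;=\; \sup_{w} \bigl\{\langle v, w\rangle - g(w)\bigr\} \;\geq\; \langle v, z\rangle - g(z),
\]
and rearranging gives $g(z) + g^*(v) - \langle z, v\rangle \geq 0$. No convexity or smoothness assumptions beyond what is already in hand are needed for this last step, and no hard obstacles arise: the only mildly delicate piece is the reduction to the radial direction, which is justified by Cauchy--Schwarz plus the fact that $g$ is radially symmetric and $t \mapsto t\|v\| - \tfrac{\theta}{p} t^p$ is eventually decreasing (so the supremum is attained at a finite $t^{*} \geq 0$).
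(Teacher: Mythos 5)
Your proof is correct. Note that the paper itself gives no proof of this lemma: it is stated as a known technical fact with citations to \cite{nesterov2008accelerating,ghadimi2017second}, so your derivation is the self-contained argument those references rely on. All three steps are sound: the radial reduction is valid because $\langle v,z\rangle \leq \|v\|\,\|z\|$ with equality for $z$ parallel to $v$, so the supremum over $\{z : \|z\|=t\}$ equals $t\|v\| - \tfrac{\theta}{p}t^{p}$; the scalar problem is concave on $t\geq 0$, so the stationary point $t^{\ast} = (\|v\|/\theta)^{1/(p-1)}$ is the global maximizer, and substitution yields exactly $\tfrac{p-1}{p}\left(\|v\|^{p}/\theta\right)^{1/(p-1)}$; and the Fenchel--Young inequality is, as you say, just the definition of the supremum evaluated at the point $z$. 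Two small points worth flagging. First, you implicitly assume $\theta>0$, which is needed both for the formula and for the maximizer to be finite; this is harmless here since the lemma is invoked with $\theta = \bar\kappa_2^{t+1}$ and $\theta = \bar\kappa_3^{t}/2$, which are positive, but in the degenerate case $\theta=0$ one has $g^{*}(v)=+\infty$ for $v\neq 0$. Second, your Cauchy--Schwarz step ties the argument to the Euclidean (self-dual) norm, which is the reading consistent with the lemma's statement (it uses $\|v\|$ rather than a dual norm $\|v\|_{\ast}$); for a general norm the identical computation goes through with $\|v\|$ replaced by the dual norm and the parallel vector replaced by a norming vector, which is the form one would need if the paper's norm were non-Euclidean.
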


Finally, the last step is the next Lemma which prove that $\frac{f(x_t)}{A_{t-1}} \leq  \min \limits_x \psi_t(x) = \psi_t^{\ast}$.

\begin{lemma}\label{lem:step}
    Let $\{x_t, y_t\}_{t \geq 1}$ be generated by Algorithm \ref{alg:inexact_acc}. 
    Then
    \begin{equation}\label{eq:lemma_ass}
    \psi_t^{\ast} := \min \limits_x \psi_t(x)   \geq \frac{f(x_t)}{A_{t-1}}.
    \end{equation}
\end{lemma}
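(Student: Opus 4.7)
I prove the claim $\psi_t^{\ast}\geq f(x_t)/A_{t-1}$ by induction on $t\geq 1$. For the base case $t=1$, the schedule forces $\alpha_0=1$, $A_0=1$, so $v_0=x_0$ and $x_1=S_{M,\delta_0}(x_0)$, giving $\psi_1(x)=f(x_1)+\langle\nabla f(x_1),x-x_1\rangle+\tfrac{\bar\kappa_2^1}{2}\|x-x_0\|^2+\tfrac{\bar\kappa_3^1}{3}\|x-x_0\|^3$. Splitting the linear term around $x_0$ and applying Lemma~\ref{lem:dual} separately against the quadratic and cubic regularizers yields
\[
\psi_1^{\ast}\geq f(x_1)+\langle\nabla f(x_1),x_0-x_1\rangle-\min\Bigl\{\tfrac{\|\nabla f(x_1)\|^2}{2\bar\kappa_2^1},\ \tfrac{2\|\nabla f(x_1)\|^{3/2}}{3\sqrt{\bar\kappa_3^1}}\Bigr\},
\]
and the remaining inner product is itself bounded below by $\min\{\|\nabla f(x_1)\|^2/(4\delta_0),\|\nabla f(x_1)\|^{3/2}/\sqrt{3M}\}$ via Lemma~\ref{lem:inexact_cubic_step_improvement}. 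The parameter schedule chosen in Algorithm~\ref{alg:inexact_acc_detailed} is calibrated so that each branch of the Fenchel cost is dominated by the matching branch of the step-progress bound, giving $\psi_1^{\ast}\geq f(x_1)$.

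\textbf{Inductive step.}
Assume $\psi_t(y_t)\geq f(x_t)/A_{t-1}$. Since $\psi_t$ has the form (convex linear aggregate)$+\tfrac{\bar\kappa_2^t}{2}\|x-x_0\|^2+\tfrac{\bar\kappa_3^t}{3}\|x-x_0\|^3$ and $y_t$ is its minimizer, Lemma~\ref{lm:argmin} yields
\[
\psi_t(x)\geq\psi_t(y_t)+\tfrac{\bar\kappa_2^t}{2}\|x-y_t\|^2+\tfrac{\bar\kappa_3^t}{6}\|x-y_t\|^3,\quad\forall x.
\]
Substituting this into the recursion defining $\psi_{t+1}$, invoking the inductive hypothesis on $\psi_t(y_t)$, and lower-bounding $f(x_t)\geq f(x_{t+1})+\langle\nabla f(x_{t+1}),x_t-x_{t+1}\rangle$ by convexity, the two algebraic identities $1/A_{t-1}+\alpha_t/A_t=1/A_t$ (from $A_t=(1-\alpha_t)A_{t-1}$) and $(1-\alpha_t)x_t+\alpha_t x=v_t+\alpha_t(x-y_t)$ collapse all $\nabla f(x_{t+1})$-linear contributions into
\[
\tfrac{1}{A_t}\langle\nabla f(x_{t+1}),v_t-x_{t+1}\rangle+\tfrac{\alpha_t}{A_t}\langle\nabla f(x_{t+1}),x-y_t\rangle,
\]
with constant part $f(x_{t+1})/A_t$. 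The first inner product contributes at least $\tfrac{1}{A_t}\min\{\|\nabla f(x_{t+1})\|^2/(4\delta_t),\|\nabla f(x_{t+1})\|^{3/2}/\sqrt{3M}\}$ by Lemma~\ref{lem:inexact_cubic_step_improvement}, which applies precisely because the adaptive while-loop of the algorithm only exits when its criterion holds at $x_{t+1}$.

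\textbf{Main obstacle.}
It remains to show that this positive gain dominates the minimum over $x$ of the residual
\[
\tfrac{\alpha_t}{A_t}\langle\nabla f(x_{t+1}),x-y_t\rangle+\tfrac{\bar\kappa_2^t}{2}\|x-y_t\|^2+\tfrac{\bar\kappa_3^t}{6}\|x-y_t\|^3+\sum_{i=2}^{3}\tfrac{\bar\kappa_i^{t+1}-\bar\kappa_i^t}{i}\|x-x_0\|^i,
\]
which is the genuinely hard step because the linear-in-$x$ term is centered at $y_t$ while part of the regularization is centered at $x_0$. My approach is to split into the two branches of the $\min$ appearing in the progress bound, pair the $\|\nabla f\|^2$-branch with the quadratic regularizers and the $\|\nabla f\|^{3/2}$-branch with the cubic regularizers, and apply Lemma~\ref{lem:dual} against the $\|x-x_0\|^i$ coefficients $\bar\kappa_i^{t+1}-\bar\kappa_i^t$. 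The two safety checks in Algorithm~\ref{alg:inexact_acc_detailed}, namely $\|y_{t+1}-y_t\|\leq\|y_{t+1}-x_0\|$ and $\bar\kappa_2^t<2\delta_t\alpha_t^2/A_t$, are exactly what allow the leftover $\|x-y_t\|^i$ regularization around $y_t$ to be absorbed at the minimizer $x=y_{t+1}$, while the schedule $\bar\kappa_2^{t+1}=4\delta_t\alpha_t^2/A_t$ and $\bar\kappa_3^t\propto M\alpha_{t+1}^3/A_{t+1}$ is tuned so that the Fenchel cost on each branch is exactly matched (up to a constant $\leq 1$) by the step-progress gain. Verifying this constant matching on both branches---together with checking that Lemma~\ref{lem:dual} can indeed be applied to the mixed-center sum---is the main technical burden of the proof.
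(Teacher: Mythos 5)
Your skeleton matches the paper's proof: induction on $t$, Lemma~\ref{lm:argmin} to pass from $\psi_t^{\ast}$ to a lower bound on $\psi_t(x)$ with $\|x-y_t\|$-regularization, convexity to replace $f(x_t)$ by $f(x_{t+1})+\langle\nabla f(x_{t+1}),x_t-x_{t+1}\rangle$, the identities $A_t=(1-\alpha_t)A_{t-1}$ and $v_t=(1-\alpha_t)x_t+\alpha_t y_t$ to collapse the linear terms into $\tfrac{1}{A_t}\langle\nabla f(x_{t+1}),v_t-x_{t+1}\rangle+\tfrac{\alpha_t}{A_t}\langle\nabla f(x_{t+1}),y_{t+1}-y_t\rangle$, Lemma~\ref{lem:inexact_cubic_step_improvement} for the first inner product, and a two-branch Fenchel pairing via Lemma~\ref{lem:dual}. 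However, the step you defer as ``the main technical burden'' is a genuine gap, and the route you sketch for it would not close: you cannot apply Lemma~\ref{lem:dual} to pair the linear term $\tfrac{\alpha_t}{A_t}\langle\nabla f(x_{t+1}),x-y_t\rangle$ (centered at $y_t$) against the increments $\tfrac{\bk_i^{t+1}-\bk_i^{t}}{i}\|x-x_0\|^i$ (centered at $x_0$); the conjugate inequality needs both arguments in the same displaced variable, and there is no minimization over $x$ left to exploit once you have fixed the centers differently.

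The paper resolves this by never taking a minimum over all $x$ in the induction step: it evaluates only at $x=y_{t+1}$, where $\psi_{t+1}^{\ast}=\psi_{t+1}(y_{t+1})$, and invokes the safety check of Algorithm~\ref{alg:inexact_acc_detailed} (which guarantees $\|y_{t+1}-y_t\|\leq\|y_{t+1}-x_0\|$) to bound $\tfrac{\bk_2^{t}}{2}\|y_{t+1}-y_t\|^2+\tfrac{\bk_2^{t+1}-\bk_2^{t}}{2}\|y_{t+1}-x_0\|^2\geq\tfrac{\bk_2^{t+1}}{2}\|y_{t+1}-y_t\|^2$, while the nonnegative cubic increment $\tfrac{\bk_3^{t+1}-\bk_3^{t}}{3}\|y_{t+1}-x_0\|^3$ is simply dropped. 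After this re-centering everything lives at $y_t$, and Lemma~\ref{lem:dual} is applied with $z=y_t-y_{t+1}$, $v=\tfrac{\alpha_t}{A_t}\nabla f(x_{t+1})$ against the \emph{full} coefficients $\bk_2^{t+1}$ (quadratic branch, matched to the $\|\nabla f(x_{t+1})\|^2/(4\delta_t)$ branch of \eqref{eq:cubic_step_improvement} under $\bk_2^{t+1}\geq 2\delta_t\alpha_t^2/A_t$) and $\bk_3^{t}$ (cubic branch, matched to the $\|\nabla f(x_{t+1})\|^{3/2}/\sqrt{3M}$ branch under $\bk_3^{t}\geq\tfrac{8M}{3}\tfrac{\alpha_t^3}{A_t}$) --- not against the increments. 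Your base case at $t=1$ is workable (it is the paper's $t=0\to t=1$ induction step; the paper instead starts trivially at $t=0$ with the convention $1/A_{-1}=0$), but as written the proposal stops exactly where the proof becomes nontrivial, and the constant-matching you promise to verify is only attainable along the re-centered route just described.
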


\begin{proof}
    We prove Lemma by induction. Let us start with $t=0$, we define $A_{-1}$ such that $\tfrac{1}{A_{-1}}=0$. Then $\tfrac{f(x_0)}{A_{-1}}=0$ and $\psi_0^{\ast}=0$, hence, $\psi_0^{\ast}\geq \tfrac{f(x_0)}{A_{-1}}$. Let us assume that $\frac{f(x_t)}{A_{t-1}} \leq \psi^{\ast}_t$
    and show that  $\frac{f(x_{t+1})}{A_{t}}\leq \psi^{\ast}_{t+1}$.
    By definition,
    \begin{gather*}
        \psi_t(x) =  \sum \limits_{i=2}^{3} \frac{ \bk^{t}_i}{i}\|x - x_0\|^i + \sum \limits_{j = 0}^{t - 1} \frac{\alpha_j}{A_j}l(x,x_{j+1}).
    \end{gather*}
    Next, we apply Lemma \ref{lm:argmin} with the following choice of parameters: $h(x) =  \sum \limits_{j = 0}^{t - 1}\frac{\alpha_j}{A_j}l(x,x_{j+1})$, $\theta_i = \bk^{t}_i$ for $i = 2, 3$. \\
    By \eqref{eq:estimating_seq}, $y_t = \argmin \limits_{x\in \mathbb{R}^n} \bar{h}(x)$, and we have
    \begin{gather*}
        \psi_t(x) \geq \psi_t^{\ast} + \frac{ \bk^{t}_2}{2}\|x - y_t\|^2 + \frac{ \bk^{t}_3}{6}\|x - y_t\|^3\\
        \stackrel{\eqref{eq:lemma_ass}}{\geq} \frac{f(x_t)}{A_{t - 1}} + \frac{ \bk^{t}_2}{2}\|x - y_t\|^2 + \frac{ \bk^{t}_3}{6}\|x - y_t\|^3,
    \end{gather*}
    where the last inequality follows from the assumption of the lemma.
    
    By the definition of $\psi_{t+1}(x)$, the above inequality, and convexity of $f$,we obtain
    \begin{gather*}
        \psi_{t+1}(y_{t+1}) =  \psi_t(y_{t+1}) + \tfrac{ \bk^{t+1}_2-\bk_2^{t}}{2}\|y_{t+1} - x_0\|^2 
            +  \tfrac{ \bk^{t+1}_3-\bk_3^{t}}{3}\|y_{t+1} - x_0\|^3
         + \frac{\alpha_t}{A_t}l(y_{t+1},x_{t+1})\\
        \overset{\eqref{eq:condition_acc}}{\geq} \frac{f(x_t)}{A_{t - 1}} + \frac{ \bk^{t+1}_2}{2}\|y_{t+1} - y_t\|^2 
            +  \frac{ \bk^{t}_3}{6}\|y_{t+1} - y_t\|^3  + \frac{\alpha_t}{A_t}l(y_{t+1},x_{t+1})\\
        \geq  \frac{1}{A_{t-1}}(f(x_{t+1}) + \langle \nabla f(x_{t+1}), x_t - x_{t+1}\rangle)\\
        + \frac{ \bk^{t+1}_2}{2}\|y_{t+1} - y_t\|^2 
            +  \frac{ \bk^{t+1}_3}{6}\|y_{t+1} - y_t\|^3
         + \frac{\alpha_t}{A_t}l(y_{t+1},x_{t+1}). 
    \end{gather*}
    \begin{remark}
    We highlight that in the first inequality we use the condition \eqref{eq:condition_acc} that is checked in the method. Because of induction part and fixed $\delta$ in previous proofs, the current step is regularized by previous $\kappa_t$ and $\delta_{t-1}$. From our perspective, it is strange that we are losing so much information by removing $\frac{ \bk^{t+1}_2-\bk_2^{t}}{2}\|y_{t+1} - x_0\|^2$. We look forward for a more clear proof without need of such condition in the method and smarter adaptation rules.
    \end{remark}
    
    Next, we consider the sum of two linear models from the last inequality:
    \begin{gather*}
    \frac{1}{A_{t-1}}(f(x_{t+1}) + \langle \nabla f(x_{t+1}), x_t - x_{t+1}\rangle) + \frac{\alpha_t}{A_t}l(y_{t+1},x_{t+1}) \\
    =\frac{1}{A_{t-1}}(f(x_{t+1}) + \langle \nabla f(x_{t+1}), x_t - x_{t+1}\rangle)
    + \frac{\alpha_t}{A_t} (f(x_{t+1}) 
    + \langle \nabla f(x_{t+1}), y_{t+1} - x_{t+1}\rangle)\\
        \stackrel{\eqref{eq:alphas}}{=} \frac{1 - \alpha_t}{A_t}f(x_{t+1}) + \frac{1 - \alpha_t}{A_t}\langle \nabla f(x_{t+1}), x_t - x_{t+1}\rangle + \frac{\alpha_t}{A_t}f(x_{t+1}) +\frac{\alpha_t}{A_t}\langle \nabla f(x_{t+1}), y_{t+1}- x_{t+1}\rangle\\
        \\
        \stackrel{\eqref{eq:u_t}}{=} \frac{f(x_{t+1})}{A_{t}} + \frac{1 - \alpha_t}{A_t}\langle \nabla f(x_{t+1}), \frac{v_t - \alpha_t y_t}{1 - \alpha_t}- x_{t+1}\rangle + \frac{\alpha_t}{A_t}\langle \nabla f(x_{t+1}), y_{t+1}- x_{t+1} \rangle \\
        = \frac{f(x_{t+1})}{A_{t}} + \frac{1}{A_t}\langle \nabla f(x_{t+1}), v_t- x_{t+1}\rangle + \frac{\alpha_t}{A_t}\langle \nabla f(x_{t+1}), y_{t+1}- y_t \rangle.
    \end{gather*}
    As a result, by \eqref{eq:estimating_seq}, we get
    \begin{equation}
    \label{eq:x_bar_def}
        \begin{gathered}
            \psi^{\ast}_{t+1} = \psi_{t+1}(y_{t+1})  \geq \frac{f(x_{t+1})}{A_{t}} + \frac{1}{A_t}\langle \nabla f(x_{t+1}), v_t- x_{t+1}\rangle 
            + \frac{ \bk^{t+1}_2}{2}\|y_{t+1} - y_t\|^2 \\
            +  \frac{ \bk^{t}_3}{6}\|y_{t+1} - y_t\|^3 +\frac{\alpha_t}{A_t}\langle \nabla f(x_{t+1}), y_{t+1}- y_t  \rangle.
        \end{gathered}
    \end{equation}
 
    To complete the induction step, we show, that the sum of all terms in the RHS except $\frac{f(x_{t+1})}{A_t}$  is non-negative. 
    
    Lemma \ref{lem:inexact_cubic_step_improvement} provides the lower bound for $\langle \nabla f(x_{t+1}), v_t - x_{t+1}\rangle$. Let us consider the case when the minimum in the RHS of \eqref{eq:cubic_step_improvement} is attained at the first term. 
    By Lemma \ref{lem:dual} with the following choice of the parameters
    $$z = y_t - y_{t+1}, ~~ v = \frac{\alpha_t}{A_t}\nabla f(x_{t+1}), ~~ \theta = \bk_i^{t},$$
    we have 
    \begin{equation}\label{eq:1case}
        \frac{\bk_2^{t+1}}{2}\|y_t -y_{t+1}\|^2 + \frac{\alpha_t}{A_t}\langle \nabla f(x_{t+1}), y_{t+1} - y_t \rangle \geq - \frac{1}{2}\left( \frac{\|\frac{\alpha_t}{A_t}\nabla f(x_{t+1})\|^2}{\bk_2^{t+1}} \right).
    \end{equation}
    Hence,
    \begin{gather*}
        \frac{f(x_{t+1})}{A_t} + \frac{1}{A_t}\langle \nabla f(x_{t+1}), v_t - x_{t+1}\rangle + \frac{ \bk_2^{t+1}}{2}\|y_{t+1} - y_t\|^2 + \frac{\alpha_t}{A_t}\langle \nabla f(x_{t+1}), y_{t+1} - {y_t}\rangle \\
        \stackrel{\eqref{eq:1case}}{\geq} \frac{f(x_{t+1})}{A_t} + \frac{1}{A_t}\langle \nabla f(x_{t+1}), v_t - x_{t+1}\rangle - \frac{\|\frac{\alpha_t}{A_t}\nabla f(x_{t+1})\|^i}{2\bk_2^{t+1}}  \\
        \stackrel{\eqref{eq:cubic_step_improvement}}{\geq}
        \frac{f(x_{t+1})}{A_t} + \frac{1}{A_t}\|\nabla f(x_{t+1})\|^2 \left( \frac{1}{4\delta_t}\right) - \frac{\|\frac{\alpha_t}{A_t}\nabla f(x_{t+1})\|^2}{2\bk_i^{t+1}} \\
         \geq \frac{f(x_{t+1})}{A_t}, 
    \end{gather*}
    where the last inequality holds by our choice of the parameters
    \begin{equation}\label{eq:bar_kappa}
        \bk_2^{{t+1}} \geq  \frac{2\delta_t\al_t^2}{A_t} .
    \end{equation}

    Next, we consider the case when the minimum in the RHS of \eqref{eq:cubic_step_improvement} is achieved on the second term. Again, by Lemma \ref{lem:dual} with the same choice of $z, v$ and with $\theta = \frac{\bk_{3}^{t}}{2}$, we have
    \begin{equation}
        \label{eq:2case}
        \frac{\bk_{3}^{t}}{6}\|y_t -y_{t+1}\|^3 + \frac{\alpha_t}{A_t}\langle \nabla f(x_{t+1}), y_{t+1} - {y_t} \rangle \geq - \frac{2}{3}\left( \frac{2\|\frac{\alpha_t}{A_t}\nabla f(x_{t+1})\|^{3}}{\bk_{3}^{t}} \right)^\frac{1}{2}.
    \end{equation}

   Hence, we get
    \begin{gather*}
        \frac{f(x_{t+1})}{A_t} + \frac{1}{A_t}\langle \nabla f(x_{t+1}), v_t - x_{t+1}\rangle + \frac{\bk_{3}^{t+1}}{6}\|y_{t+1} - y_t\|^3 + \frac{\alpha_t}{A_t}\langle \nabla f(x_{t+1}), y_{t+1} - {y_t}\rangle\\
        \stackrel{\eqref{eq:2case}}{\geq} \frac{f(x_{t+1})}{A_t} + \frac{1}{A_t}\langle \nabla f(x_{t+1}), v_t - x_{t+1}\rangle  - \frac{2}{3}\left( \frac{2\|\frac{\alpha_t}{A_t}\nabla f(x_{t+1})\|^{3}}{\bk_{3}^{t}} \right)^\frac{1}{2}\\
        \stackrel{\eqref{eq:cubic_step_improvement}}{\geq}
        \frac{f(x_{t+1})}{A_t} + \frac{1}{A_t}\|\nabla f(x_{t+1})\|^\frac{3}{2}
        \left( \frac{1}{3M}\right)^\frac{1}{2} - \frac{2}{3}\left( \frac{2\|\frac{\alpha_t}{A_t}\nabla f(x_{t+1})\|^{3}}{\bk_{3}^{t}} \right)^\frac{1}{2}\\
         \geq \frac{f(x_{t+1})}{A_t},
    \end{gather*}
    where the last inequality holds by our choice of $\bk_{3}^{t}$:
    \begin{equation}\label{eq:bar_kappa_p+1}
        \bk_{3}^t \geq \frac{8M}{3} \frac{\alpha_t^{3}}{A_t}.
    \end{equation}

    To sum up, by our choice of the parameters $\bk_{i}^{t}$, $i=2,3$, we prove the induction step.
    
\end{proof}

Finally, we are in a position to prove the convergence rate. For the proof, we denote $R$ as
\begin{equation}
\label{R_teorem3}
\|x_0 - x^{\ast}\| \leq R.
\end{equation}

\begin{theorem}\label{thm:acc_convergence_app}
Let $f(x)$ be a convex function, $f(x)$ has $L_2$-Lipschitz-continuous Hessian, $B_{t}$ is a $\delta_{v_{t}}^{x_{t+1}}$-inexact Hessian, there exists $\delta_{max}$ such that $\delta_{v_t}^{x_{t+1}}\leq \delta_{max}$ for all $t$, and $M\geq 2L_2$. 
    Algorithm  \ref{alg:inexact_acc} makes $T\geq 1$ iterations with parameters
    \begin{gather*}
        \bk_2^{{t+1}} =  \frac{4\delta_t\al_t^2}{A_t} \quad  \bk_{3}^{t+1} = \frac{8M}{3} \frac{\alpha_{t+1}^{3}}{A_{t+1}}.
    \end{gather*}
    to find $\varepsilon$-solution $x_T$ such that $f(x_{T}) - f(x^{\ast})\leq \varepsilon$. The value of $T$ is bounded by  
    \begin{equation*}
    \label{eq:acc_convergence}
     T = O(1) \max \lb \ls \tfrac{\delta_{T} R^2}{\varepsilon}\rs^{1/2};  \ls\tfrac{MR^3}{\varepsilon}\rs^{1/3}\rb, \text{where }R= \|x_0-x^{\ast}\|.
    \end{equation*}

\end{theorem}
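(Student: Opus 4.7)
My approach is to sandwich the minimum of the estimating sequence $\psi_T$ between the bounds established in Lemmas~\ref{lem:upper_seq} and~\ref{lem:step}, and then balance the two resulting terms against the schedule $(\alpha_t, A_t, \bar\kappa_i^t)$. Concretely, combining
\[
\tfrac{f(x_T)}{A_{T-1}} \;\leq\; \psi_T^{\ast} \;\leq\; \psi_T(x^{\ast}) \;\leq\; \tfrac{f(x^{\ast})}{A_{T-1}} + \tfrac{\bar\kappa_2^T}{2}\|x^{\ast}-x_0\|^2 + \tfrac{\bar\kappa_3^T}{3}\|x^{\ast}-x_0\|^3,
\]
multiplying through by $A_{T-1}$, and using $\|x^{\ast}-x_0\|\leq R$, I immediately obtain $f(x_T) - f(x^{\ast}) \leq \tfrac{A_{T-1}\bar\kappa_2^T}{2} R^2 + \tfrac{A_{T-1}\bar\kappa_3^T}{3} R^3$.

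\textbf{Plugging in the schedule.} Next I would substitute $\bar\kappa_2^T = 4\delta_{T-1}\alpha_{T-1}^2/A_{T-1}$ and $\bar\kappa_3^T = (8M/3)\alpha_T^3/A_T$ together with $\alpha_t = 3/(t+3)$ and the explicit form $A_t = 6/[(t+1)(t+2)(t+3)]$. A direct calculation gives $A_{T-1}\bar\kappa_2^T = 4\delta_{T-1}\alpha_{T-1}^2 = 36\delta_{T-1}/(T+2)^2$, and $A_{T-1}\bar\kappa_3^T = (8M/3)\alpha_T^3\,(A_{T-1}/A_T) = 72M/[T(T+3)^2]$, since the ratio $A_{T-1}/A_T = (T+3)/T$ is $O(1)$. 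Assembling,
\[
f(x_T) - f(x^{\ast}) \;=\; O\!\left( \tfrac{\delta_T R^2}{T^2} + \tfrac{MR^3}{T^3} \right),
\]
so requiring each summand to be at most $\varepsilon$ produces the two terms in the stated $\max$.

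\textbf{Verifying adaptivity.} It remains to justify two facts about the algorithm's adaptive machinery. First, the inner while loop (lines 5--7) terminates after at most $\log_{\gamma_{inc}}(\delta_{max}/\delta_0)$ trials: Lemma~\ref{lem:inexact_cubic_step_improvement} guarantees that the line-search condition holds as soon as $\delta_t \geq \delta_{v_t}^{x_{t+1}}$, so the hypothesis $\delta_{v_t}^{x_{t+1}}\leq \delta_{max}$ bounds every $\delta_t$ by $\gamma_{inc}\delta_{max}$. Second, the rollback in lines 10--12 is what enforces the two structural conditions used by Lemma~\ref{lem:step}, namely the growth condition $\bar\kappa_2^{t+1}\geq 2\delta_t\alpha_t^2/A_t$ and the distance condition $\|y_{t+1}-y_t\|\leq \|y_{t+1}-x_0\|$, along the successful trajectory.

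\textbf{Expected main obstacle.} I expect the hardest point to lie in the interplay between the adaptive $\delta_t$ and the regularizer $\bar\kappa_2^{t+1}$: because the latter depends linearly on $\delta_t$, an adaptive inflation of $\delta_t$ can in principle invalidate a previously-established induction step in Lemma~\ref{lem:step}, which is precisely why the rollback is triggered. The analysis must verify that this rollback chain terminates (using the global upper bound $\gamma_{inc}\delta_{max}$ on the $\delta_t$'s) and that along the final trajectory one has $\delta_t\leq \delta_T$ for all $t\leq T$, so that replacing $\delta_{T-1}$ by $\delta_T$ in the $O(\delta R^2/T^2)$ term is valid. Once this bookkeeping is in hand, the sandwich argument above yields the claimed complexity.
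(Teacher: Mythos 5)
Your proposal is correct and follows essentially the same route as the paper's proof: sandwiching $\psi_T^{\ast}$ between the lower bound $f(x_T)/A_{T-1}$ from Lemma~\ref{lem:step} and the upper bound of Lemma~\ref{lem:upper_seq}, then substituting $\alpha_t = 3/(t+3)$, $A_t = 6/[(t+1)(t+2)(t+3)]$ and the $\bar\kappa$ schedule to get $f(x_T)-f(x^{\ast}) = O\ls \delta_T R^2/T^2 + MR^3/T^3\rs$. Your additional bookkeeping on the while-loop termination and the rollback conditions is consistent with (and slightly more explicit than) the paper's treatment, which relegates those points to Lemma~\ref{lem:inexact_cubic_step_improvement} and the surrounding discussion of condition \eqref{eq:condition_acc}.
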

\begin{proof}  
        From Lemmas \ref{lem:step} and \ref{lem:upper_seq}, we obtain that, for all $t \geq 1$,
    \begin{gather*}
       \frac{f(x_{t+1})}{A_t} \stackrel{\eqref{eq:lemma_ass}}{\leq} \psi_{t+1}^{\ast} \leq \psi_{t+1}(x^{\ast}) \stackrel{\eqref{eq:acc_upper_bound}}{\leq}
       \frac{f(x^{\ast})}{A_{t}} + \frac{ \bk^{t}_2}{2}\|x^{\ast} - x_0\|^2 + \frac{\bk^{t}_{3}}{3}\|x^{\ast} - x_0\|^3.
    \end{gather*}
    Now, we fix $\alpha_t$ to get upperbounds for $\bk^{t}_{3}$ and $\bk^{t+1}_{2}$. Let us take
        \begin{equation}
        \label{alpha_t}
            \alpha_t = \frac{3}{t+3}, ~ t \geq 1.
        \end{equation}
        Then, we have
        \begin{equation}\label{eq:A_t_bound}
            A_{T} = 
            \prod_{t=1}^{T}\left(1-\alpha_{t}\right)=\prod_{t=1}^{T} \frac{t}{t+3}=\frac{T !3 !}{(T+3) !}=\frac{6}{(T+1)(T+2)(T+3)}.
         \end{equation}
        So, we get the next upperbounds 
        \begin{equation}
        \label{eq:A_t_bound_acc}
                \frac{\alpha_{t}^{3}}{A_{t}} =\tfrac{9}{2} \frac{(t+1)(t+2)}{(t+3)^2} \leq 4.5,
        \end{equation}
        
        \begin{equation}
        \label{eq:delta_t_bound_acc}
                \frac{\delta_{t}\alpha_{t}^{2}}{A_{t}} =\tfrac{3}{2} \frac{\delta_t(t+1)(t+2)}{(t+3)} \leq \tfrac{3}{2}  \delta_t(t+1).
        \end{equation}
    Finally, we obtain the following convergence rate bound
    \begin{gather*}
       f(x_{T}) - f(x^{\ast})  \stackrel{\eqref{R_teorem3}}{\leq} A_T\left( \frac{ \bk^{T}_2}{2}R^{2} + \frac{\bk^{t}_{3}}{3}  R^{3}
       \right) \\
      =  A_T\left(
      \frac{4\delta_{T-1}\al_{T-1}^2}{A_{T-1}}R^{2}+ 12M R^{3}\right) 
    \stackrel{\eqref{eq:A_t_bound_acc}, \eqref{eq:delta_t_bound_acc}}{\leq}
      \frac{36\delta_T R^{2}}{(T+3)^{2}} 
    + \frac{72MR^{3}}{(T+1)^{3}}
      .
    \end{gather*}
    
    Thus, we obtain the statement of the theorem.
\end{proof}

\section{Quasi-Newton approximation}
In this section, we present the inexactness guaranties for different types of \QN approximations.
\subsection{\QN with history}
For this subsection, we assume that we have \QN approximation with history:
 \begin{equation*}
     s_i = z_{i+1}-z_{i},\qquad
     y_i = \nabla f(z_{i+1}) - \nabla f(z_{i}).
 \end{equation*}
 
Let us start with one of the most popular \QN method: L-BFGS \eqref{eq:l-bfgs}.
\begin{equation}
\label{eq:bfgs-ap}
B^{m+1}_{t} = B^{m}_{t} +  \tfrac{y_m y_m^{\top}}{y_m^{\top} s_m} -  \tfrac{B_t^m s_m (B^m_t s_m)^{\top}}{s_m^{\top} B_t^m s_m}.
\end{equation}
We want to get some results with respect to Assumption \ref{ass:direct_inexactness}.
Note, that it is enough to show that $\|B_x-\nabla^2 f(x)\|\leq \delta_{max}$ then automatically all directional inexactness are smaller than $\delta_{max}$.
\begin{lemma}
    \label{lm:bfgs}
Let $f(x)$ be a convex function with respect to the global minimizer $x^{\ast}$, and let $f(x)$ have an $L_1$-Lipschitz-continuous gradient and an $L_2$-Lipschitz-continuous Hessian. Suppose $B_{t}$ is a $m$-memory L-BFGS approximation from \eqref{eq:bfgs-ap}, then 
\begin{equation}
    \delta_{max}\leq m L_1,
\end{equation}
where 
\begin{equation}
    \label{eq:delta_max_QN}
    \|B_t-\nabla^2 f(x_t)\|\leq \delta_{max},\qquad \forall x \in \R^d.
\end{equation}
\end{lemma}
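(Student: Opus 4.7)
The strategy is to bound $\|B^m_t\|$ inductively in the memory index $m$, then invoke the triangle inequality together with $\|\nabla^2 f(x_t)\| \leq L_1$ (which follows from the $L_1$-Lipschitz gradient assumption).

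The key preliminary observation is that convexity combined with $L_1$-Lipschitz gradient yields the co-coercivity inequality
\begin{equation*}
\|\nabla f(z_{i+1}) - \nabla f(z_i)\|^2 \leq L_1\, \langle \nabla f(z_{i+1}) - \nabla f(z_i),\, z_{i+1} - z_i \rangle,
\end{equation*}
which in the notation of \eqref{eq:bfgs-ap} reads $\|y_i\|^2 \leq L_1\, y_i^\top s_i$. As an immediate consequence, the rank-one "plus" term in the BFGS update satisfies the operator-norm bound $\big\|\tfrac{y_i y_i^\top}{y_i^\top s_i}\big\| = \tfrac{\|y_i\|^2}{y_i^\top s_i} \leq L_1$. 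This is the single analytic input from the smoothness/convexity assumptions.

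Next, I would verify by induction that every iterate $B^i_t$ remains positive semidefinite (taking $B^0 = 0$, or any bounded PSD matrix, as the anchor). This is the classical property of BFGS, valid whenever the curvature condition $y_i^\top s_i > 0$ holds, and here it follows for free from $y_i^\top s_i \geq \tfrac{1}{L_1}\|y_i\|^2 \geq 0$ (degenerate cases $y_i = 0$ can be handled by skipping the update). Given $B^i_t \succeq 0$, the subtracted rank-one correction $\tfrac{B^i_t s_i (B^i_t s_i)^\top}{s_i^\top B^i_t s_i}$ is itself PSD. By Weyl's inequality, subtracting a PSD matrix can only decrease the maximum eigenvalue, while adding a rank-one PSD term of norm at most $L_1$ increases it by at most $L_1$:
\begin{equation*}
\lambda_{\max}(B^{i+1}_t) \;\leq\; \lambda_{\max}\!\left(B^i_t - \tfrac{B^i_t s_i (B^i_t s_i)^\top}{s_i^\top B^i_t s_i}\right) + \lambda_{\max}\!\left(\tfrac{y_i y_i^\top}{y_i^\top s_i}\right) \;\leq\; \lambda_{\max}(B^i_t) + L_1.
\end{equation*}
Iterating this recursion from $B^0 = 0$ gives $\|B^m_t\| = \lambda_{\max}(B^m_t) \leq m L_1$, and the final triangle inequality
\begin{equation*}
\|B^m_t - \nabla^2 f(x_t)\| \;\leq\; \|B^m_t\| + \|\nabla^2 f(x_t)\| \;\leq\; m L_1 + L_1
\end{equation*}
delivers the claim, the extra $L_1$ being absorbed (or the bound reported as $O(m L_1)$).

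The main obstacle I anticipate is not the norm estimate itself, which is essentially a direct consequence of Weyl's inequality together with co-coercivity, but rather handling the initial matrix $B^0$ cleanly. In practical L-BFGS one typically sets $B^0 = \sigma_t I$ with $\sigma_t$ depending on the most recent pair, in which case $\lambda_{\max}(B^0)$ must also be controlled (and added to the $mL_1$ bound). A secondary subtlety is ensuring the curvature condition $y_i^\top s_i > 0$ strictly, which is automatic for strictly convex $f$ but may require an update-skipping rule in degenerate directions; such a modification does not affect the stated inexactness bound.
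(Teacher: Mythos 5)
Your proof is correct and follows essentially the same route as the paper's: positive semidefiniteness of the BFGS iterates by induction, the convexity/co-coercivity bound $\|y_i\|^2 \le L_1\, y_i^\top s_i$ to control the added rank-one term, and accumulation of $L_1$ over the $m$ updates. The only (cosmetic) difference is the final step: the paper bounds the error two-sidedly in the Loewner order, $-L_1 I \preceq B_t - \nabla^2 f(x_t) \preceq m L_1 I$, which gives the norm bound $m L_1$ exactly, whereas your triangle inequality yields $(m+1)L_1$; switching to the two-sided argument recovers the stated constant.
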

\begin{proof}
Let us start from the fact that $B_t \succeq 0$ if $B_0 \succeq 0$. From \eqref{eq:bfgs-ap}, we show by induction that if $B_t\succeq 0$ then $B_{t+1}\succeq 0$. 
By using Lemma $6.1$ from \cite{rodomanov2021rates}, we get that for all $h \in \R^d$, we have
\begin{align*}
    \la B_t h, h\ra - \frac{\la B_t u, h \ra^2}{\la B_t u, u\ra} 
    &= \min\limits_{\alpha \in R} \lb \la B_t h, h\ra - 2\al \la B_t u, h \ra + \al^2\la B_t u, u\ra \rb\\
    &= \min\limits_{\alpha \in R} \lb \la B_t (h-\al u), h  - \al  u \ra \rb\\
    \geq 0.
\end{align*}
Hence, $B_{t} -  \tfrac{B_t u d(B_t u)^{\top}}{u^{\top} B_t u} \succeq 0$. This proves that L-BFGS approximation \eqref{eq:bfgs-ap} is positive semidefinite and $B_t \succeq 0$. And we can prove the first part of the bound:
\begin{gather*}
\begin{split}
L_1 I &\succeq \nabla ^2 f(x_t)\\
B_t &\succeq 0
\end{split}
  \quad \Longrightarrow \quad B_t - \nabla ^2 f(x_t) \succeq - L_1 I
\end{gather*}
    The second part of the bound is based on Lemma 5.1 Option 1 from \cite{berahas2021quasi}.
    \begin{gather*}
        B^{m+1}_{t} = B^{m}_{t} +  \tfrac{y_m y_m^{\top}}{y_m^{\top} s_m} -  \tfrac{B_t^m s_m (B^m_t s_m)^{\top}}{s_m^{\top} B_t^m s_m},
    \end{gather*}
    where $s_m = z_{m+1}-z_{m},\qquad
     y_m = \nabla f(z_{m+1}) - \nabla f(z_{m})$. Hence, we get 
     \begin{align*}
        \lambda_{max}\ls B_t^{m+1}\rs &= \lambda_{max} \ls B^{m}_{t} +  \tfrac{y_m y_m^{\top}}{y_m^{\top} s_m} -  \tfrac{B_t^m s_m (B^m_t s_m)^{\top}}{s_m^{\top} B_t^m s_m}\rs\\
        &\leq  \lambda_{max} \ls B^{m}_{t} +  \tfrac{y_m y_m^{\top}}{y_m^{\top} s_m} \rs\\
        &\leq  \lambda_{max} \ls B^{m}_{t}\rs +  \lambda_{max} \ls \tfrac{y_m y_m^{\top}}{y_m^{\top} s_m} \rs\\
        &\leq  \lambda_{max} \ls B^{m}_{t}\rs +  \ls\tfrac{\|y_m\|^2}{y_m^{\top} s_m} \rs\\
        &\leq  \lambda_{max} \ls B^{m}_{t}\rs + L_1,
     \end{align*}
     where the last inequality holds because $f(x)$ is $L_1$-Lipschitz continuous and convex function.
     So, by summation, it means that $B^{m+1}_{t} \preceq \ls m L_1 +\lambda_{max}(\B_0)\rs I$. If we take $L_1 I\succeq B_0 \succeq 0$, we get that $B_t =B^{m}_{t}\preceq m L_1$, hence
     \begin{gather*}
\begin{split}
0 &\preceq \nabla ^2 f(x_t)\\
B_t &\preceq m L_1 I
\end{split}
  \quad \Longrightarrow \quad B_t - \nabla ^2 f(x_t) \preceq  m L_1 I.
\end{gather*}
Finally, $ \|B_t-\nabla^2 f(x_t)\|\leq m L_1$.
\end{proof}

If we a little bit change and damp the update of L-BFGS, like this
\begin{equation}
\label{eq:bfgs-damped}
B^{m+1}_{t} = B^{m}_{t} +  \myred{\tfrac{1}{m}}\tfrac{y_m y_m^{\top}}{y_m^{\top} s_m} -  \tfrac{B_t^m s_m (B^m_t s_m)^{\top}}{s_m^{\top} B_t^m s_m},
\end{equation}
hence we get \textit{damped L-BFGS} approximation, where the biggest inexactness of Classical L-BFGS is reduced. So, similarly to previous Lemma, we get the next result
\begin{lemma}
    \label{lm:bfgs_damped}
Let $f(x)$ be a convex function with respect to the global minimizer $x^{\ast}$, and let $f(x)$ have an $L_1$-Lipschitz-continuous gradient and an $L_2$-Lipschitz-continuous Hessian. Suppose $B_{t}$ is a $m$-memory \textit{damped L-BFGS} approximation from \eqref{eq:bfgs-damped}, then 
\begin{equation}
    \delta_{max}\leq L_1,
\end{equation}
where $\delta_{max}$ is from \eqref{eq:delta_max_QN}.
\end{lemma}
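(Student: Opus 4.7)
The plan is to mirror the proof of Lemma \ref{lm:bfgs} line by line, tracking how the extra damping factor $\tfrac{1}{m}$ propagates through the spectral bounds and precisely cancels the $m$-fold accumulation of rank-one updates.

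First, I would re-establish positive semidefiniteness $B_t \succeq 0$ by induction on the inner update index. The argument is identical to the one already given for undamped L-BFGS in Lemma \ref{lm:bfgs}: multiplying the rank-one curvature correction $\tfrac{y_k y_k^\top}{y_k^\top s_k}$ by $\tfrac{1}{m} > 0$ preserves PSD-ness, and the subtracted BFGS term $\tfrac{B_t^k s_k (B_t^k s_k)^\top}{s_k^\top B_t^k s_k}$ leaves a PSD residual by Lemma 6.1 of \cite{rodomanov2021rates}. Combined with $\nabla^2 f(x_t) \preceq L_1 I$ (from $L_1$-Lipschitz gradient), this gives the one-sided bound $B_t - \nabla^2 f(x_t) \succeq -L_1 I$.

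Second, I would derive the matching upper bound on $\lambda_{\max}(B_t)$. Dropping the negative semidefinite subtracted term and using subadditivity of $\lambda_{\max}$, for each inner step $k \in \{0, \ldots, m-1\}$,
\begin{equation*}
\lambda_{\max}(B_t^{k+1}) \;\leq\; \lambda_{\max}(B_t^{k}) \;+\; \tfrac{1}{m}\cdot\tfrac{\|y_k\|^2}{y_k^\top s_k} \;\leq\; \lambda_{\max}(B_t^{k}) \;+\; \tfrac{L_1}{m},
\end{equation*}
where the last inequality uses co-coercivity of $\nabla f$ for convex $L_1$-smooth functions, namely $y_k^\top s_k \geq \tfrac{1}{L_1}\|y_k\|^2$. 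Telescoping over the $m$ inner updates starting from an initialization with $\lambda_{\max}(B_t^0) = 0$ (or, more generally, $\lambda_{\max}(B_t^0) = O(L_1/m)$) yields $B_t \preceq L_1 I$. Together with $\nabla^2 f(x_t) \succeq 0$ this gives $B_t - \nabla^2 f(x_t) \preceq L_1 I$.

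Combining the two one-sided bounds yields $\|B_t - \nabla^2 f(x_t)\| \leq L_1$, so $\delta_{\max} \leq L_1$ as claimed, and Assumption \ref{ass:direct_inexactness} then holds with $\delta_x^y \leq L_1$ uniformly in the direction. The only substantive difference from Lemma \ref{lm:bfgs} is the explicit bookkeeping of the damping factor $\tfrac{1}{m}$, which exactly offsets the accumulation of $m$ memory updates and strips the multiplicative $m$ from the final estimate. There is no real obstacle: the calculation is entirely routine once one notices that damping trades a smaller per-step contribution for a uniformly bounded overall accumulation.
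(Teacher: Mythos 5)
Your proposal is correct and follows essentially the same route as the paper, which simply states that the proof of Lemma \ref{lm:bfgs_damped} is identical to that of Lemma \ref{lm:bfgs}: positive semidefiniteness of $B_t$ gives the lower bound $B_t - \nabla^2 f(x_t) \succeq -L_1 I$, while the damped per-step increment $\tfrac{1}{m}\cdot\tfrac{\|y_k\|^2}{y_k^\top s_k} \leq \tfrac{L_1}{m}$ telescopes to $B_t \preceq L_1 I$. Your explicit bookkeeping of the $\tfrac{1}{m}$ factor and of the initialization $\lambda_{\max}(B_t^0)$ is, if anything, slightly more careful than the paper's one-line deferral.
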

The proof is exactly the same as the proof of Lemma \ref{lm:bfgs}
By uniting these Lemmas with Theorems \ref{thm:cubic_convergence} and \ref{thm:acc_convergence}, we get the proof of Theorems \ref{thm:cubic_bfgs} and \ref{thm:accelerated_cubic_bfgs}.

\subsection{\QN with sampling}
For this subsection, we assume that we have \QN approximation with sampled HVP
 \begin{equation*}
     y_i = \nabla^2 f(x_{t})s_i.
 \end{equation*}
We extend the class of approximation to \textit{convex Broyden class}.
For symmetric positive semidefinite matrices $A$ and $G$, the DFP update is defined as
\begin{equation}
\label{eq:DFP_mem}
    \DFP(A,G, s) = \ls I - \frac{s s^{\top}A}{\la A s, s \ra } \rs^{\top} G 
    \ls I - \frac{s s^{\top}A}{\la A s, s \ra } \rs + \frac{A s (A s)^{\top} }{\la As, s\ra}.
\end{equation}
The BFGS update is defined as
\begin{equation}
\label{eq:BFGS_mem}
    \BFGS(A,G, s) = G -  \frac{G s (G s)^{\top} }{\la Gs, s\ra} + \frac{A s (A s)^{\top} }{\la As, s\ra}.
\end{equation}
We can form the convex Broyden class as a convex combination of BFGS and DFP updates.
\begin{equation}
\label{eq:Broyd_mem}
    \Broyd_{\upsilon}(A,G, s) = \upsilon \DFP(A,G,s) + (1-\upsilon) \BFGS(A,G,s),
\end{equation}
where $\upsilon \in [0,1].$
To get the $m$-memory approximation, we make $m$ next steps :
$$B_t^{i+1} = \Broyd_{\upsilon}(\nabla^2 f(x_t),B_t^i, s_i)$$
\begin{lemma}
    \label{lm:broyd_sampled}
Let $f(x)$ be a convex function with respect to the global minimizer $x^{\ast}$, and let $f(x)$ have an $L_1$-Lipschitz-continuous gradient and an $L_2$-Lipschitz-continuous Hessian. We fix $B_0 = 0$. Suppose $B_{t}$ is a $m$-memory approximation from convex Broyden class (DFP + BFGS) \eqref{eq:DFP_mem}, then 
\begin{equation}
    \delta_{max}\leq L_1,
\end{equation}
where $\delta_{max}$ is from \eqref{eq:delta_max_QN}.
\end{lemma}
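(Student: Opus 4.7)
The plan is to establish, by induction on the update index $i$, the positive semidefinite bracket
$$0 \preceq B_t^{i} \preceq \nabla^2 f(x_t), \qquad i = 0, 1, \ldots, m,$$
from which the claim follows immediately: $-L_1 I \preceq B_t^m - \nabla^2 f(x_t) \preceq 0$ gives $\|B_t - \nabla^2 f(x_t)\| \leq L_1$, and since the directional inexactness of Assumption~\ref{ass:direct_inexactness} is dominated by the operator norm, we obtain $\delta_{\max} \leq L_1$. The base case $i = 0$ is trivial because $B_t^0 = 0$ and $\nabla^2 f(x_t) \succeq 0$ by convexity. For the inductive step, write $A := \nabla^2 f(x_t)$, $G := B_t^i$ with $0 \preceq G \preceq A$, and $s = s_i$. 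Since the PSD interval $\{X : 0 \preceq X \preceq A\}$ is convex and $\Broyd_\upsilon$ is a convex combination of $\DFP$ and $\BFGS$, it suffices to prove the bracket separately for each of these two updates.

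For DFP I would exploit the algebraic identity
$$\DFP(A, G, s) = A - P^\top (A - G) P, \qquad P := I - \frac{s s^\top A}{\langle A s, s\rangle},$$
which follows from the direct expansion $P^\top A P = A - \frac{A s s^\top A}{\langle A s, s\rangle}$ plugged into the definition~\eqref{eq:DFP_mem}. The identity yields at once $\DFP(A,G,s) \preceq A$ (the subtracted term is PSD whenever $A - G \succeq 0$) and $\DFP(A,G,s) = P^\top G P + \frac{A s s^\top A}{\langle A s, s\rangle} \succeq 0$. For BFGS I would invoke the classical duality between BFGS on $B$ and DFP on $H := B^{-1}$: applying the DFP invariance in inverse Hessian space to the bracket $A^{-1} \preceq H^+ \preceq G^{-1}$ and inverting the order yields $G \preceq \BFGS(A,G,s) \preceq A$, while $\BFGS(A,G,s) \succeq 0$ is inherited from $G \succeq 0$ through the rank-two corrected formula and a computation on the complement of the span of $s$.

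The chief technical obstacle is that BFGS is not directly well defined when $G$ is singular, which already occurs after the very first step starting from $B_0 = 0$ (one DFP update produces a rank-one matrix). Both the BFGS formula and the duality argument then fail, because $\langle G s, s\rangle$ may vanish and $G^{-1}$ is undefined. I would circumvent this by proving the invariance first under the strict assumption $G \succ 0$ and then passing to the limit $\varepsilon \downarrow 0$ in the regularization $G_\varepsilon := G + \varepsilon I$, using closedness of the PSD cone to transfer the bracket $0 \preceq B_\varepsilon \preceq A$ to $0 \preceq B \preceq A$. A secondary bookkeeping point is that the sampling directions $s_i$ are drawn from the unit sphere, so $\langle G s_i, s_i\rangle > 0$ holds on a full-measure set of sample sequences once $G$ is nonzero, which is enough to make the inductive argument rigorous for the uniform norm bound claimed in the lemma.
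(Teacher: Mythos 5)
Your proof is correct and its skeleton matches the paper's: both arguments propagate the two-sided bound $0 \preceq B_t^i \preceq \nabla^2 f(x_t)$ through the Broyden updates starting from $B_0=0$, reduce to the two extreme updates via convexity of the matrix interval, and then read off $\|B_t-\nabla^2 f(x_t)\|\leq L_1$ from $-L_1 I \preceq B_t - \nabla^2 f(x_t) \preceq 0$, noting that the directional inexactness of Assumption~\ref{ass:direct_inexactness} is dominated by the operator norm. Where you genuinely diverge is in how the upper bound $B_t^i \preceq \nabla^2 f(x_t)$ is obtained: the paper proves positive semidefiniteness of $\DFP$ and $\BFGS$ by direct quadratic-form computations and then simply cites Lemma~2.1 of \cite{rodomanov2021rates} for the preservation of $B \preceq A$, whereas you give a self-contained mechanism --- the projection identity $\DFP(A,G,s) = A - P^\top (A-G)P$ with $P = I - ss^\top A/\langle As,s\rangle$, which delivers both $\succeq 0$ and $\preceq A$ in one line, combined with the $\BFGS$/$\DFP$ duality $\BFGS(A,G,s)^{-1}=\DFP(A^{-1},G^{-1},As)$ for the BFGS half. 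Your route buys a proof that stands on its own and, importantly, it confronts the degeneracy at $B_0=0$ (where $\langle Gs,s\rangle=0$ makes the BFGS correction term $0/0$), which the paper silently glosses over; the paper's route is shorter at the cost of an external citation. One small repair to your limiting argument: with $G_\varepsilon = G+\varepsilon I$ you only have $G_\varepsilon \preceq A + \varepsilon I$, not $G_\varepsilon \preceq A$, so the duality yields $0 \preceq \BFGS(A,G_\varepsilon,s) \preceq A+\varepsilon I$ (or you should perturb $A$ to $A_\varepsilon$ as well); this is harmless since the bracket still converges to $0 \preceq \BFGS(A,G,s) \preceq A$ as $\varepsilon \downarrow 0$ by closedness of the semidefinite cone, with the natural convention that the subtracted BFGS term vanishes when $Gs=0$. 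Also, your intermediate claim $G \preceq \BFGS(A,G,s)$ is not true in general because of the subtracted term $Gss^\top G/\langle Gs,s\rangle$, but it is not needed anywhere.
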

\begin{proof}
Similarly to Lemma \ref{lm:bfgs}, $\BFGS(A,G, s) \succeq 0$ for BFGS update.
For DFP update, for any $h \in \R^d$, we have
\begin{align*}
   h^{\top}\DFP(A,G, s)h = \ls h - \frac{s s^{\top}Ah}{\la A s, s \ra } \rs^{\top} G 
    \ls h - \frac{s s^{\top}Ah}{\la A s, s \ra } \rs + \frac{h^{\top}A s (A s)^{\top}h }{\la As, s\ra}
    \geq 0,
\end{align*}
as $A \succeq 0 $ and $G \succeq 0$. Hence, $\DFP(A,G, s) \succeq 0$ and $\Broyd_{\upsilon}(A,G, s)\succeq 0$ as the convex combination. 

As a result, for the first bound, we get  
\begin{gather*}
\begin{split}
L_1 I &\succeq \nabla ^2 f(x_t)\\
B_t &\succeq 0
\end{split}
  \quad \Longrightarrow \quad B_t - \nabla ^2 f(x_t) \succeq - L_1 I
\end{gather*}
    The second part of the bound is based on Lemma 2.1 from \cite{rodomanov2021rates}.
    We take $A = \nabla^2 f(x)$, then $B_0= 0 \leq A$ hence all $B_t \leq A$ by Lemma 2.1. and we get the next bound
     \begin{gather*}
\begin{split}
0 &\preceq \nabla ^2 f(x_t)\\
B_t &\preceq \nabla ^2 f(x_t)
\end{split}
  \quad \Longrightarrow \quad B_t - \nabla ^2 f(x_t) \preceq   0 .
\end{gather*}
Finally, $ \|B_t-\nabla^2 f(x_t)\|\leq L_1$.
\end{proof}

So, we get a much bigger class of updates, the convex Broyden family, that can be applied as \QN with sampling in Theorems \ref{thm:cubic_bfgs} and \ref{thm:accelerated_cubic_bfgs}.

\section{Extra Experiments} \label{sec:extra_exp}
\add{We solve the following empirical risk minimization problem:
\begin{equation*}
    f(x)= \frac{1}{n}\sum_{i=1}^n \log(1+ \exp(-a_ib_i^Tx)) + \frac{\mu}{2}\|x\|^2.
\end{equation*}
For all the numerical experiments, we normalize each data point and get $\|a_i\|_2 = 1$ for all $i \in [1,...,n]$ to balance the problem and the data. In Figure \ref{fig1}, we consider the classification task on \texttt{real-sim} dataset for scaled hyperparameters on the convex case. Due to the complexity of Cubic Regularized Newton, it is out of our scope to show numerical experiments on \texttt{real-sim} dataset.
In Figures \ref{fig2} and \ref{fig3} we provide additional experiments using theoretical parameters for convex and strongly convex cases on datasets \texttt{a9a}, \texttt{gisette}, \texttt{MNIST}, \texttt{CIFAR-10}. For all the numerical experiments on  \texttt{MNIST} and \texttt{CIFAR-10} datasets, we consider the binary classification task where one class contains all the data points with labels less than 5, and the other class contain all the other data points. In Figure \ref{fig4} we provide additional experiments using tuned hyperparameters for all the Quasi-Newton and Cubic Regularized (Quasi-) Newton methods in strongly convex case. In Figure \ref{fig5} we provide additional experiments using tuned hyperparameters for all the Quasi-Newton and Cubic Regularized (Quasi-) Newton methods in the convex case. }

\begin{figure}[ht]
 
  \centering
  \includegraphics[width=0.45\linewidth]{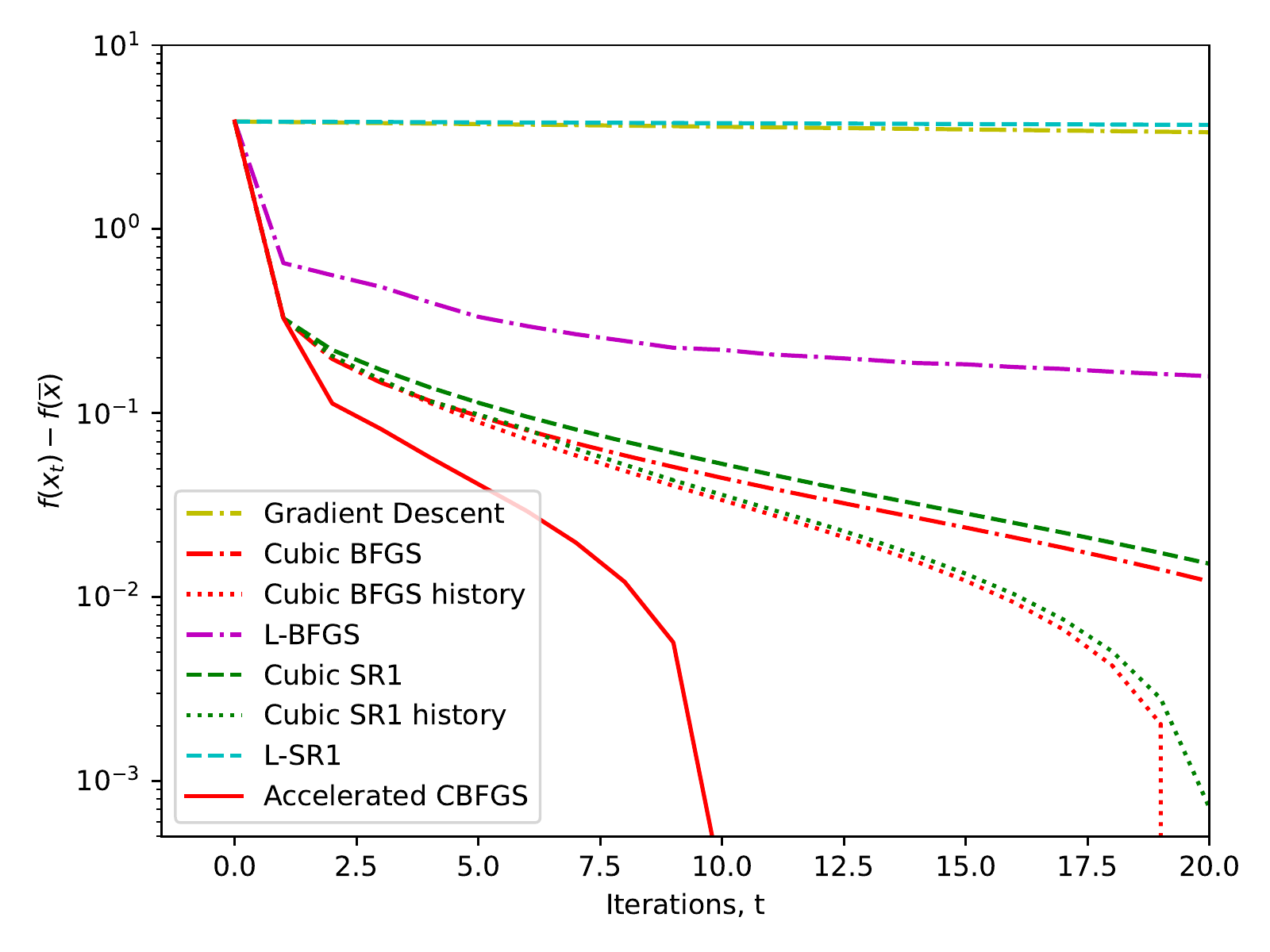}
  \includegraphics[width=0.45\linewidth]{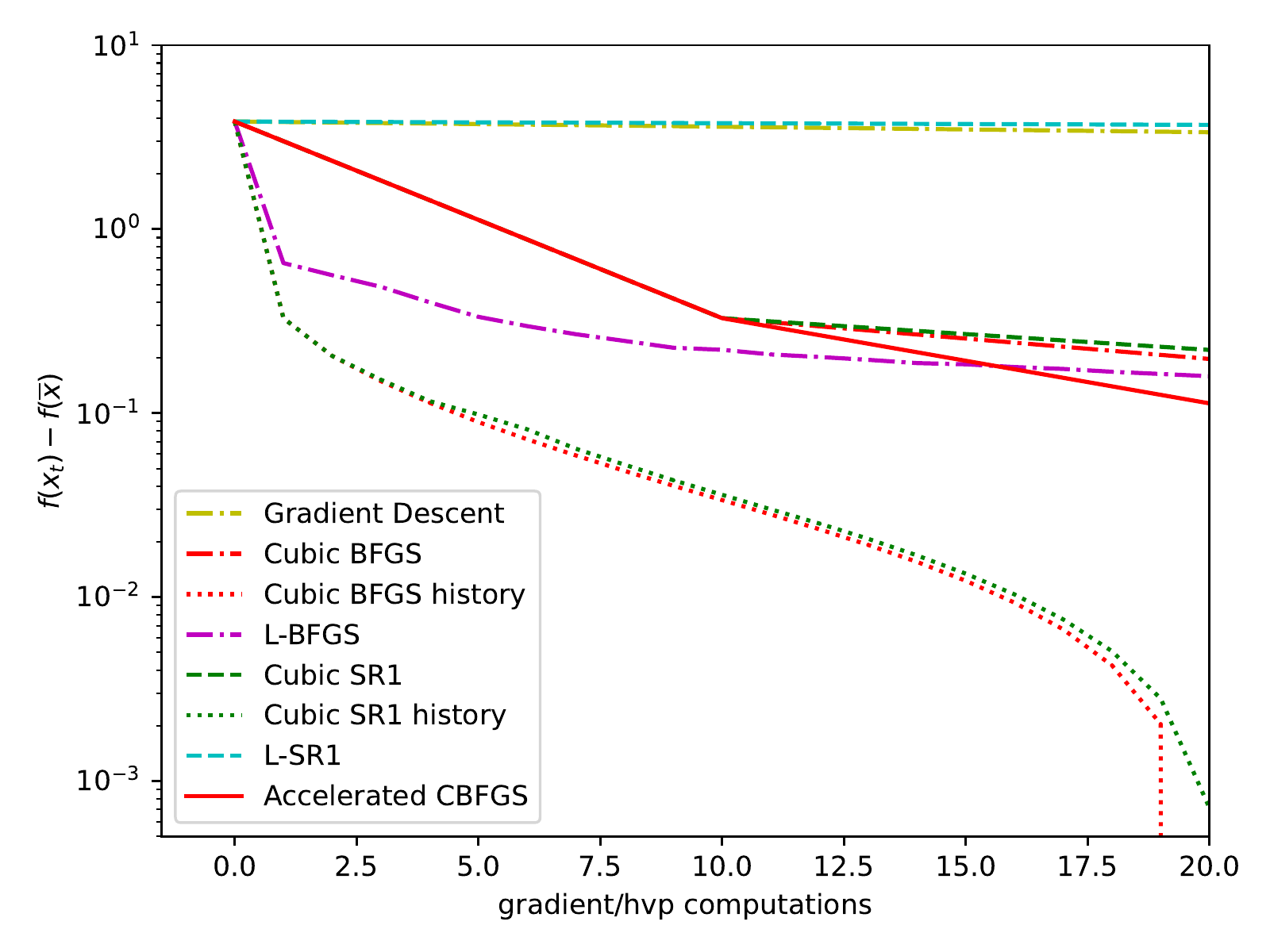}
  
  \caption{\add{Comparison of Quasi-Newton methods and Cubic Regularized (Quasi-) Newton methods for real-sim dataset on the convex case. Hyperparameters: $L_2 = 3.05 \cdot 10^{-5}$ for all the methods, $lr = 0.00013$ for L-SR1, $lr = 0.0116$ for L-BFGS.}}
\label{fig1}

\end{figure}
\subsection{Tuned Parameters.}
For all the methods we used eleven numbers spaced evenly on a log scale between $2^0$ and $2^{-10}$. After training all the methods with $L_2$ and $lr$ respectively in the range given above we selected the one that had the best performance to show the results for tuned parameters in all datasets. 
\add{Tuned hyperparameters used for \texttt{a9a} dataset in strongly-convex case shown in Figure \ref{a9a4}   are : $L_2 = 0.343116$ for Accelrated CBFGS, $lr = 0.125$ for L-BFGS, $lr = 0.00390625$ for L-SR1, $lr = 30$ for Gradient Descent and $L_2 = 0.0012708$ for the rest of the methods. 

Tuned hyperparameters used for  \texttt{gisette} dataset in strongly-convex case shown in Figure \ref{gisette4}   are $L_2 = 0.0078125$ for Cubic Regularized Newton, $L_2 = 0.0168$ for Accelerated CBFGS, $lr = 0.0078125$ for L-BFGS, $lr = 0.001953125$ for L-SR1, $lr = 10$ for Gradient Descent and $L_2 = 0.00012$ for the rest of the methods. From numerical experiments, we observed that for $L_2 < 0.0078125$ Cubic Regularized Newton method diverges and is not able to find the optimal solution.

Tuned hyperparameters used for  \texttt{MNIST} dataset in strongly-convex case shown in Figure \ref{mnist4} are : $L_2 = 0.001953$ for Cubic Regularized Newton, $L_2 = 0.17577$ for Accelrated CBFGS, $lr = 0.0078125$ for L-BFGS, $lr = 0.001953125$ for L-SR1, $lr = 30$ for Gradient Descent and $L_2 = 0.000016$ for the rest of the methods.}

\add{Tuned hyperparameters used for \texttt{CIFAR-10} dataset in strongly-convex case shown in Figure \ref{cf4} are :  $L_2 = 0.031$ for Accelrated CBFGS, $lr = 0.00390625$ for L-BFGS, $lr = 0.001953125$ for L-SR1, $lr = 10$ for gradient descent and $L_2 = 0.00031$ for the rest of the methods.
\begin{figure*}[ht]

\includegraphics[width=0.24\textwidth]{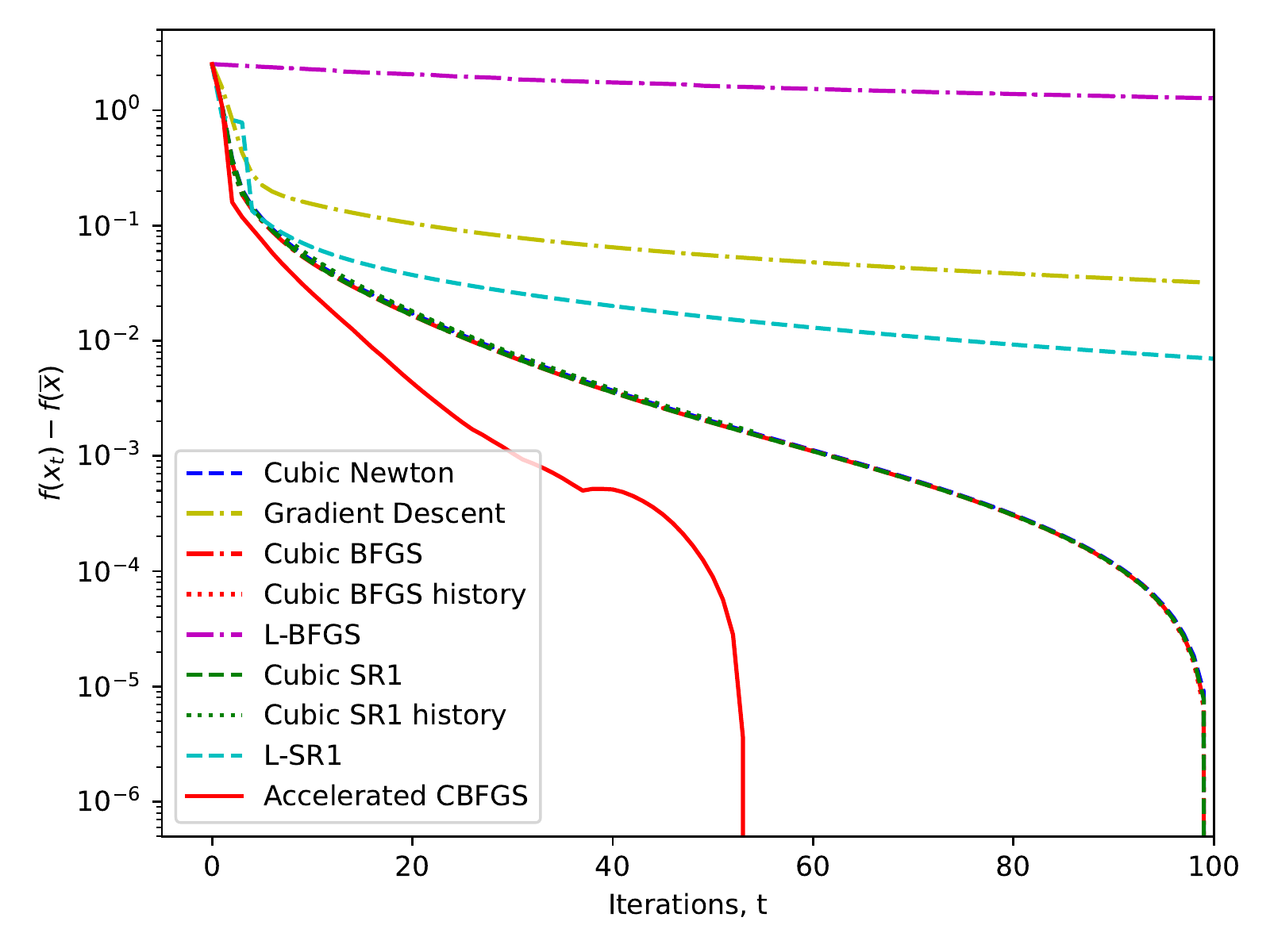}
\includegraphics[width=0.24\textwidth]{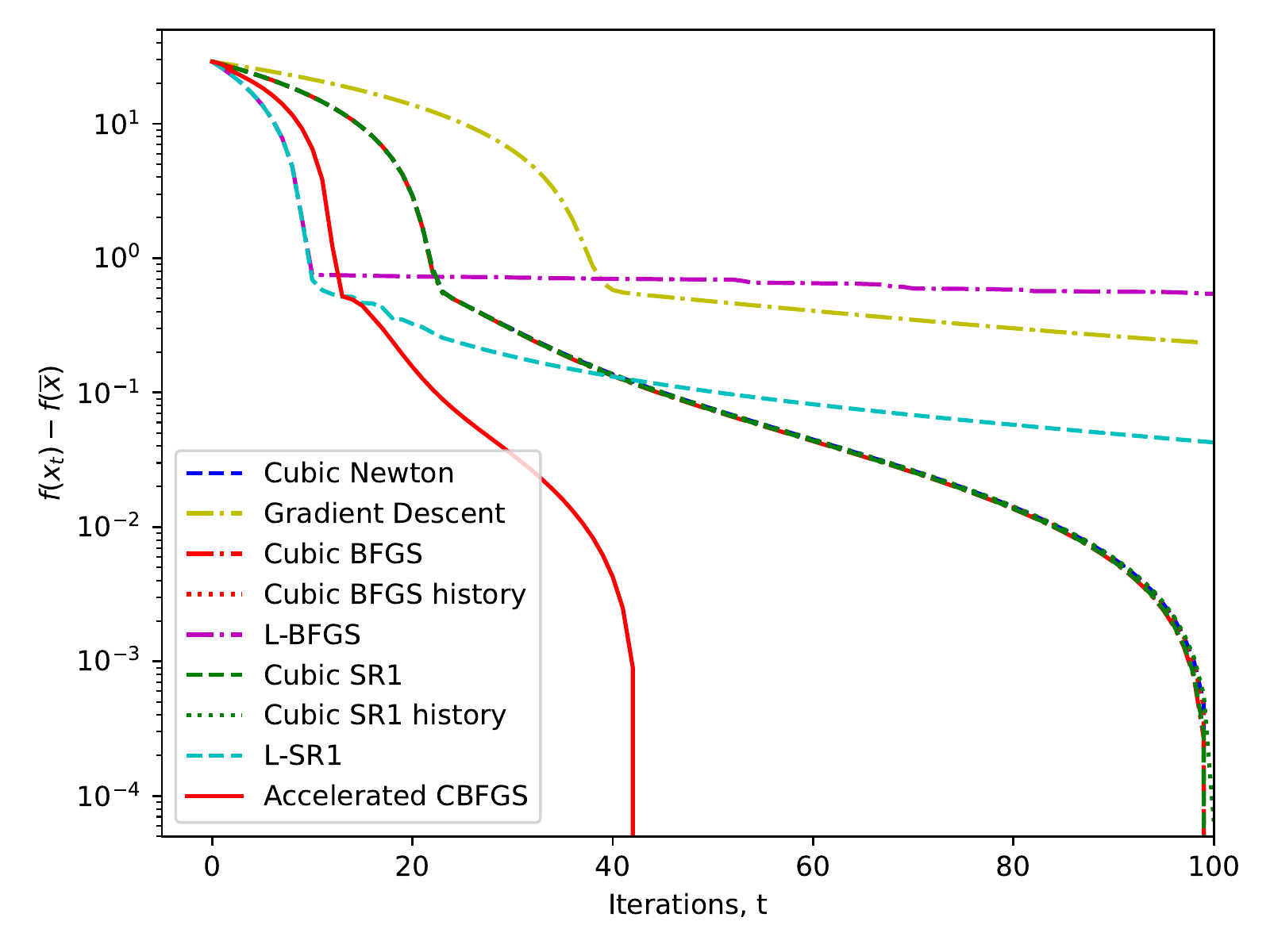}
\includegraphics[width=0.24\textwidth]{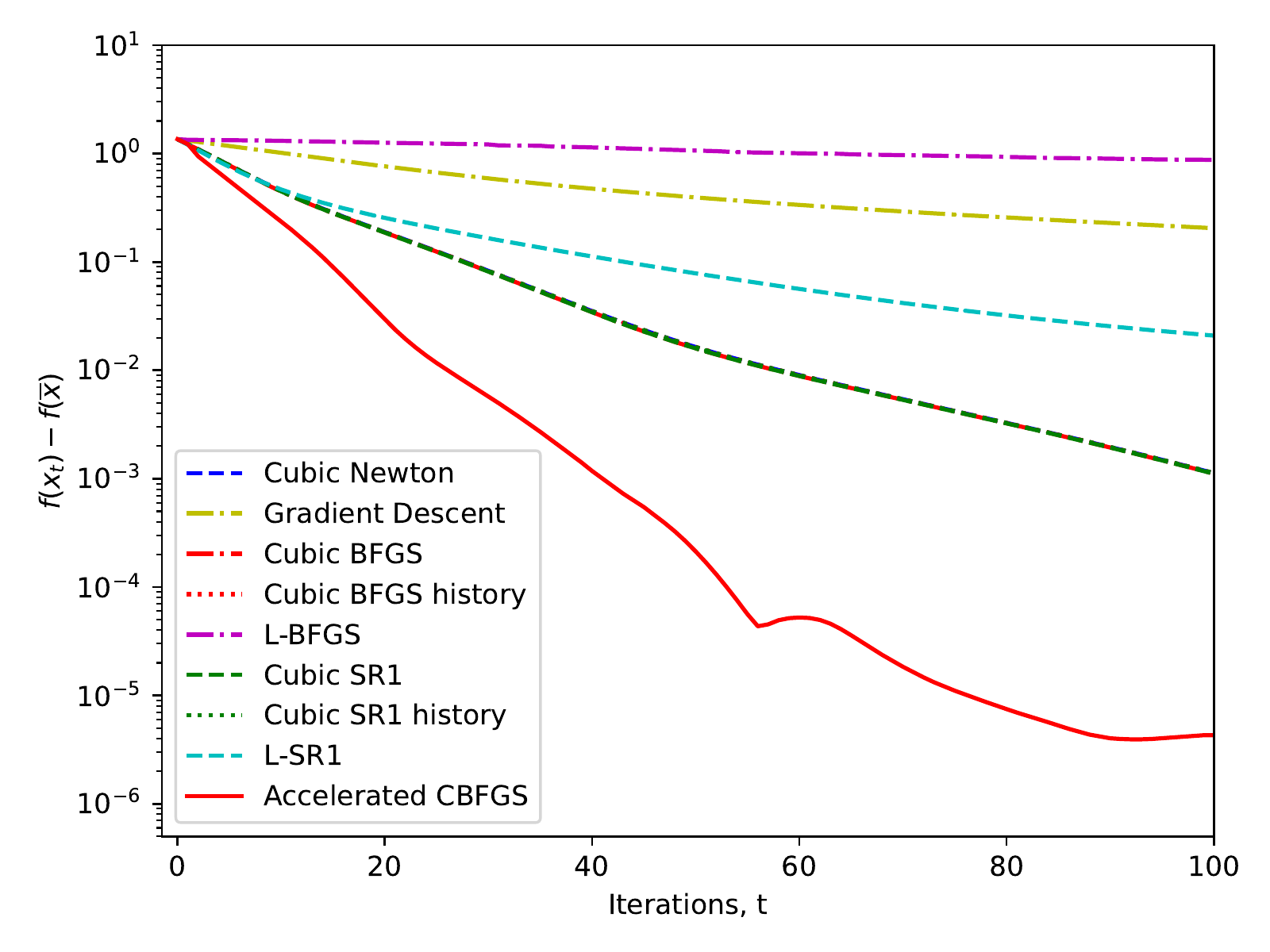}
\includegraphics[width=0.24\textwidth]{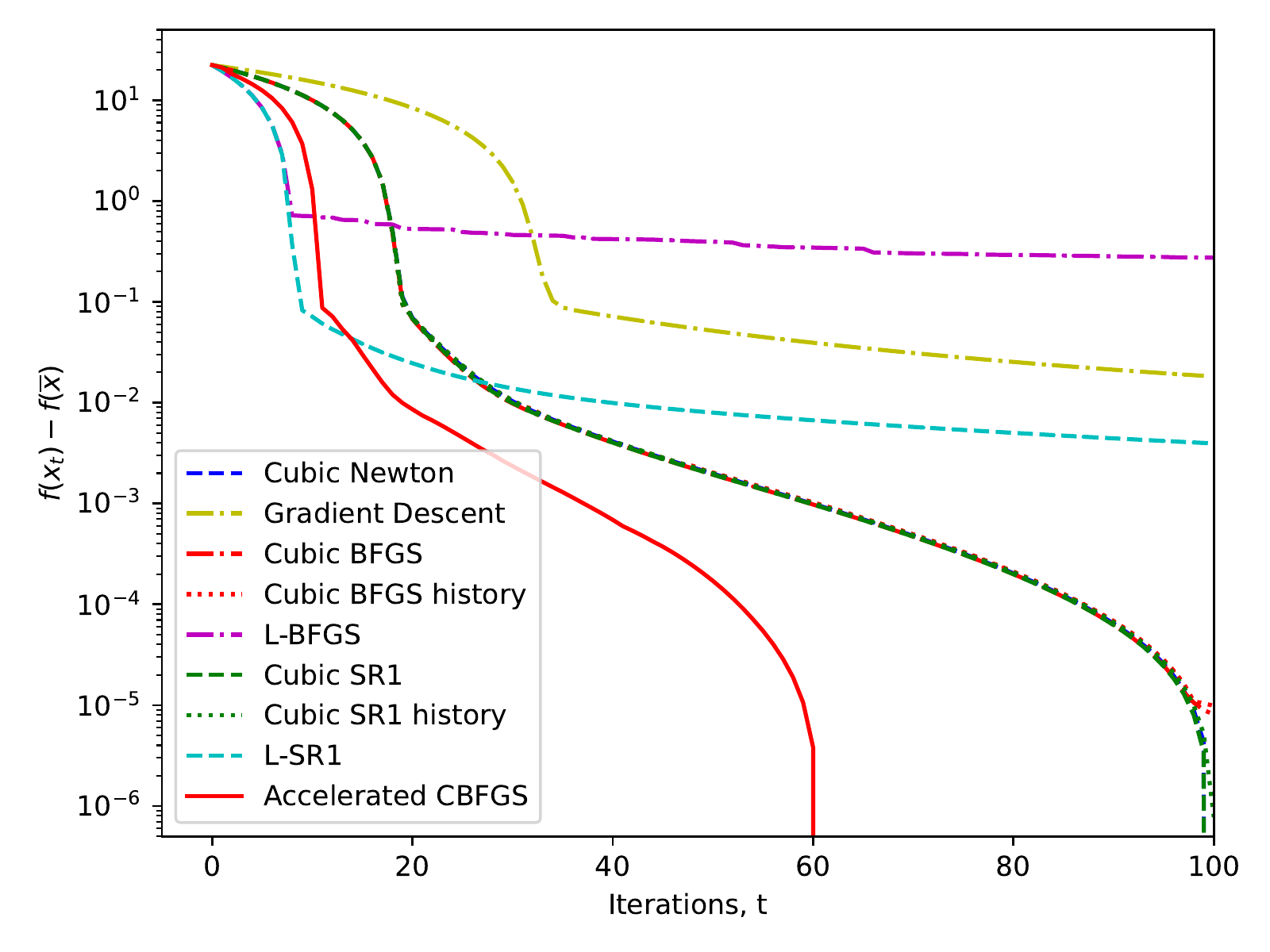}
\\
\subfigure[\texttt{a9a}]{\includegraphics[width=0.24\textwidth]{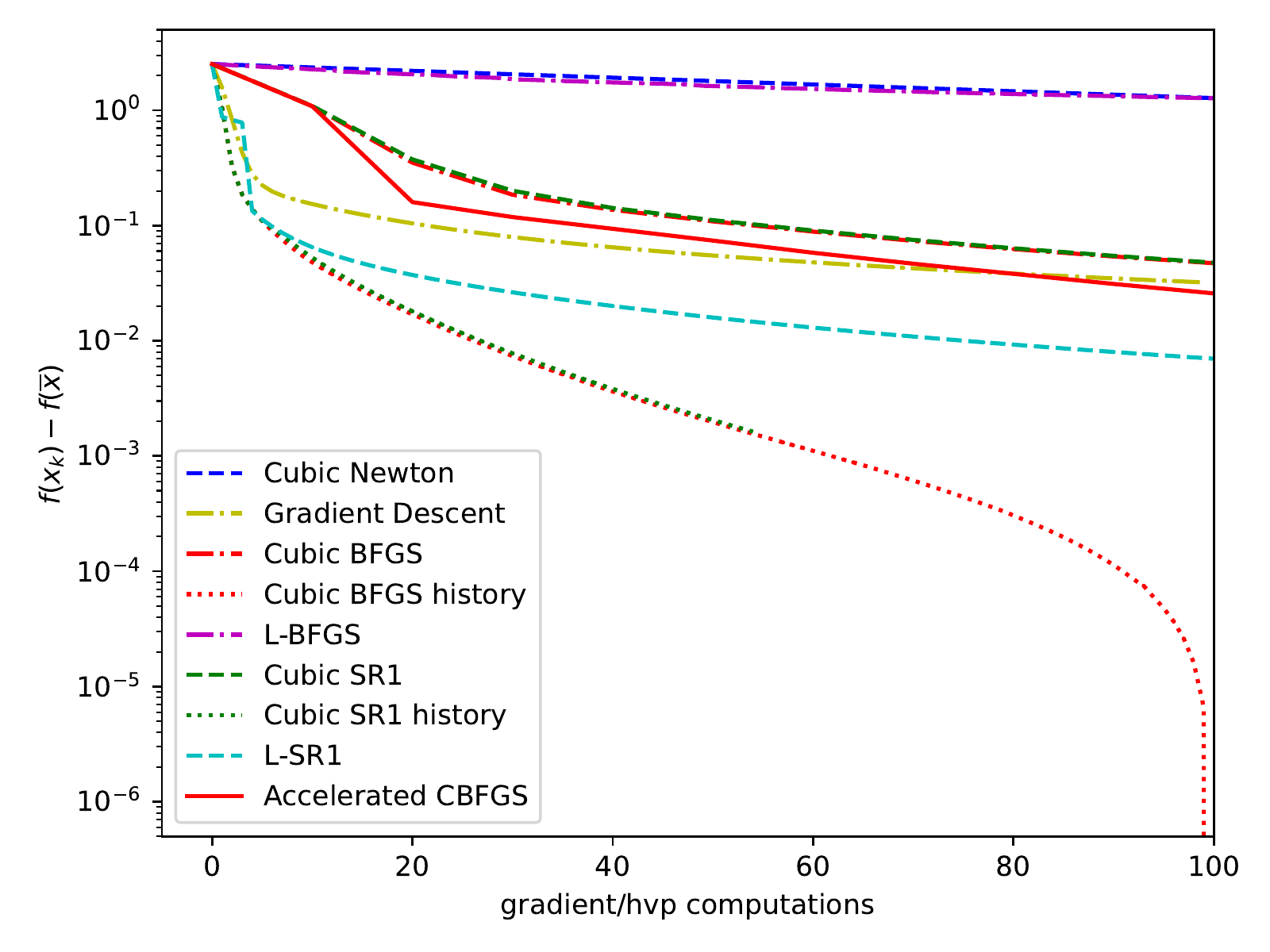}\label{a9a2}}
\subfigure[\texttt{gisette}]{\includegraphics[width=0.24\textwidth]{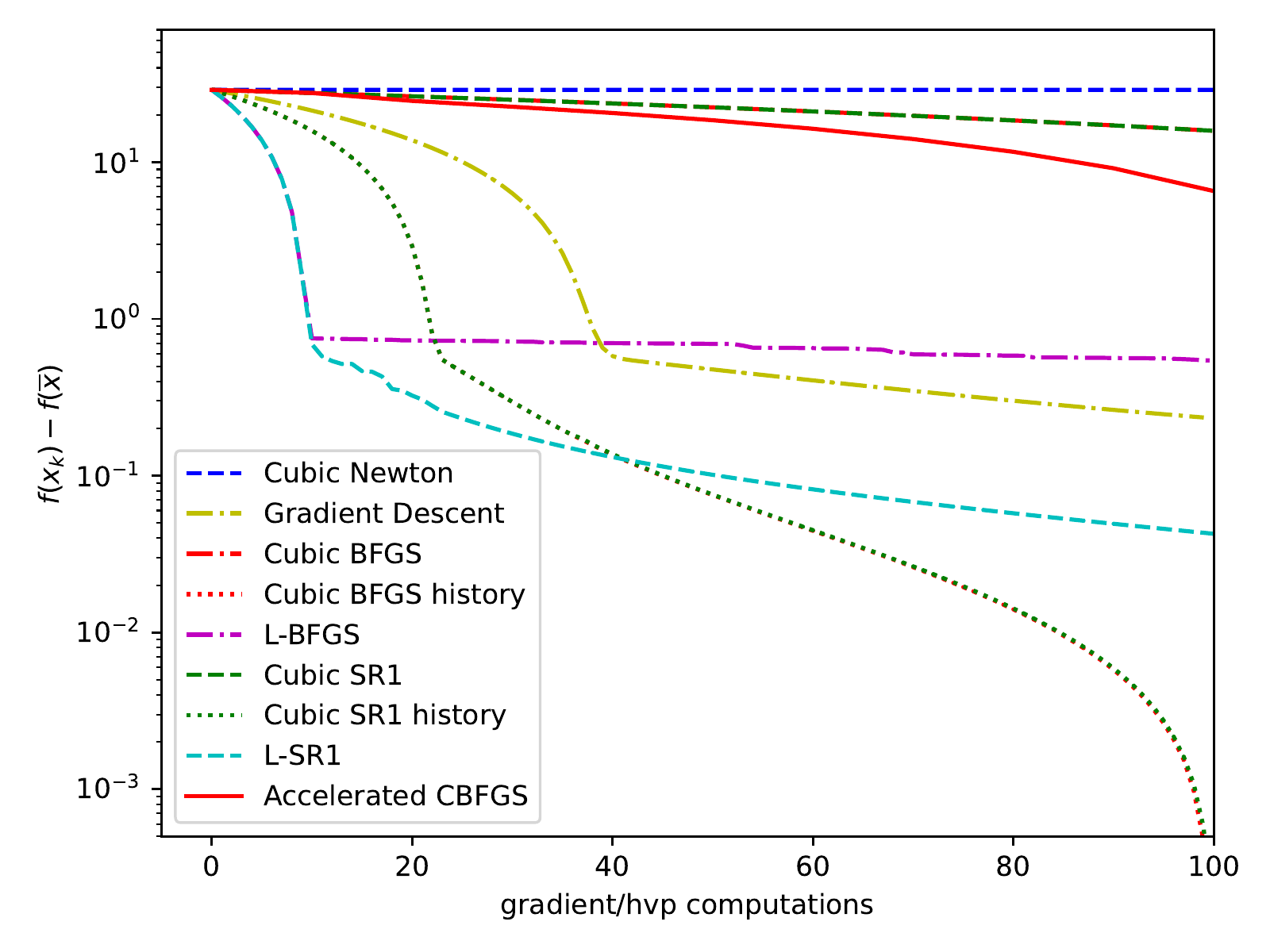}\label{gisette2}}
\subfigure[\texttt{MNIST}]{\includegraphics[width=0.24\textwidth]{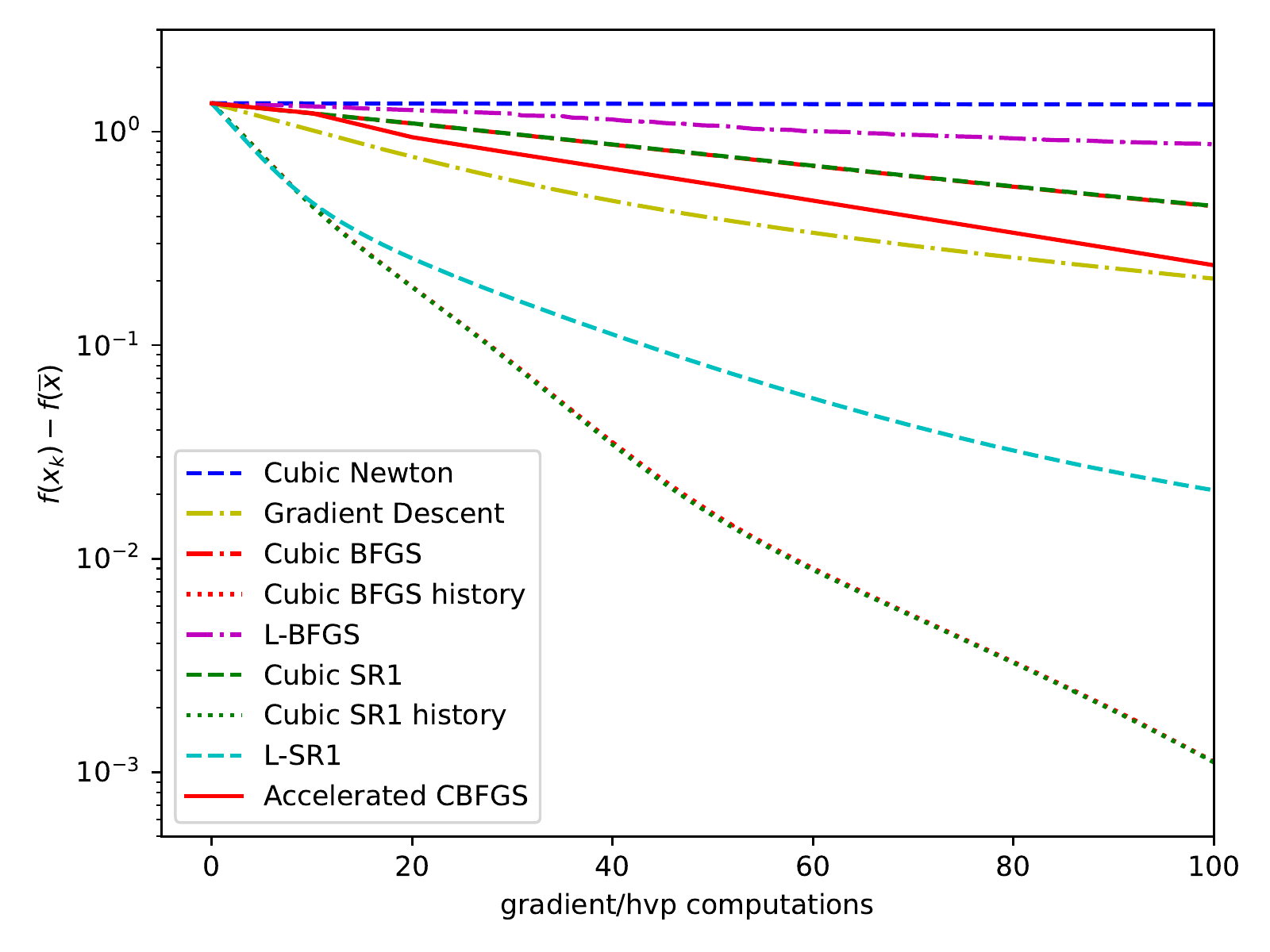}\label{mnist2}}
\subfigure[\texttt{CIFAR-10}]{\includegraphics[width=0.24\textwidth]{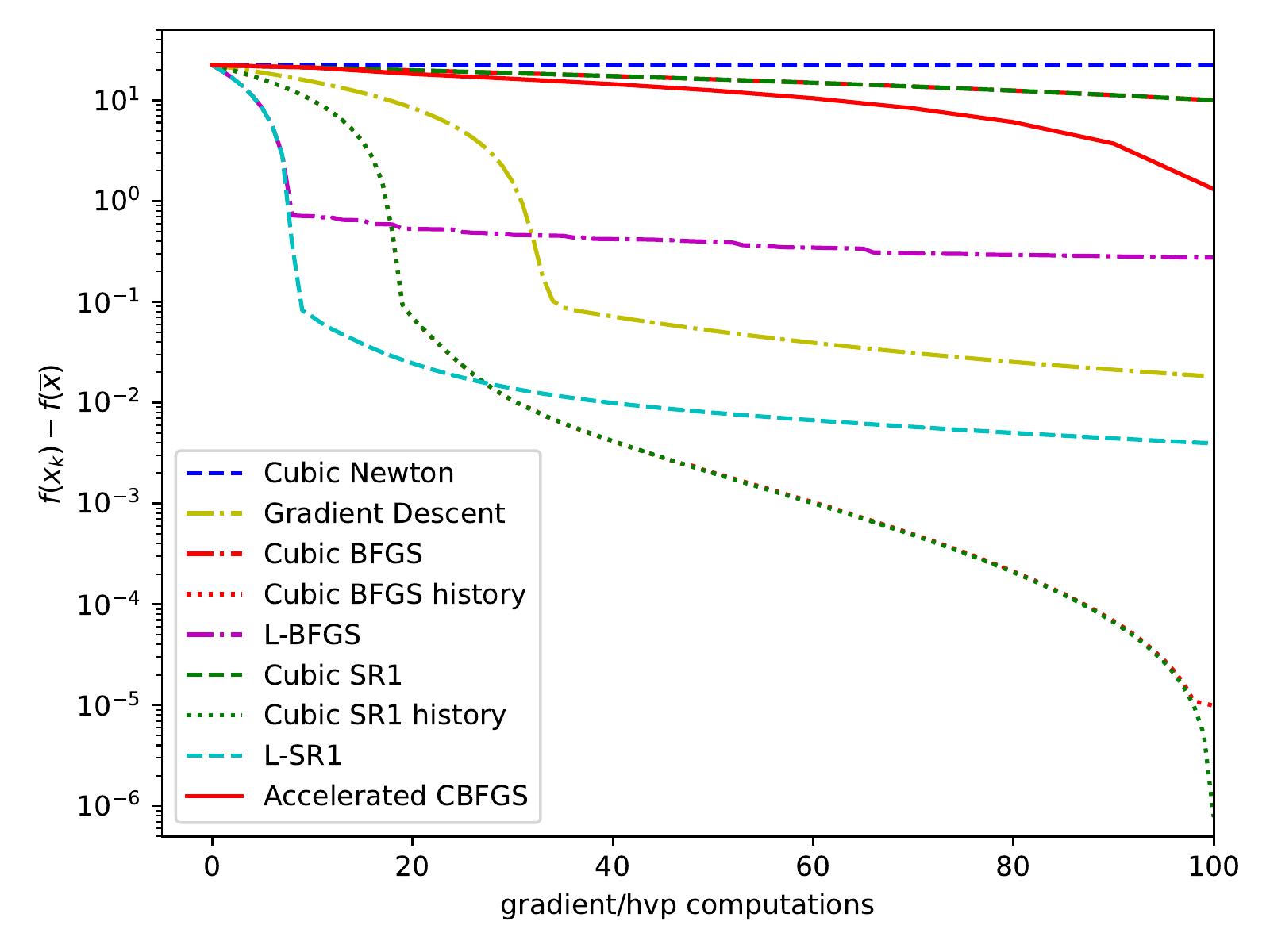}\label{cf2}}

\caption{\add{Comparison of Quasi-Newton methods and Cubic Regularized (Quasi-) Newton methods using theoretical parameters in strongly convex case on datasets \texttt{a9a}, \texttt{gisette}, \texttt{MNIST}, \texttt{CIFAR-10} respectively.}}
\label{fig2}
\vskip-10pt
\end{figure*}

\begin{figure*}[ht]
\includegraphics[width=0.24\textwidth]{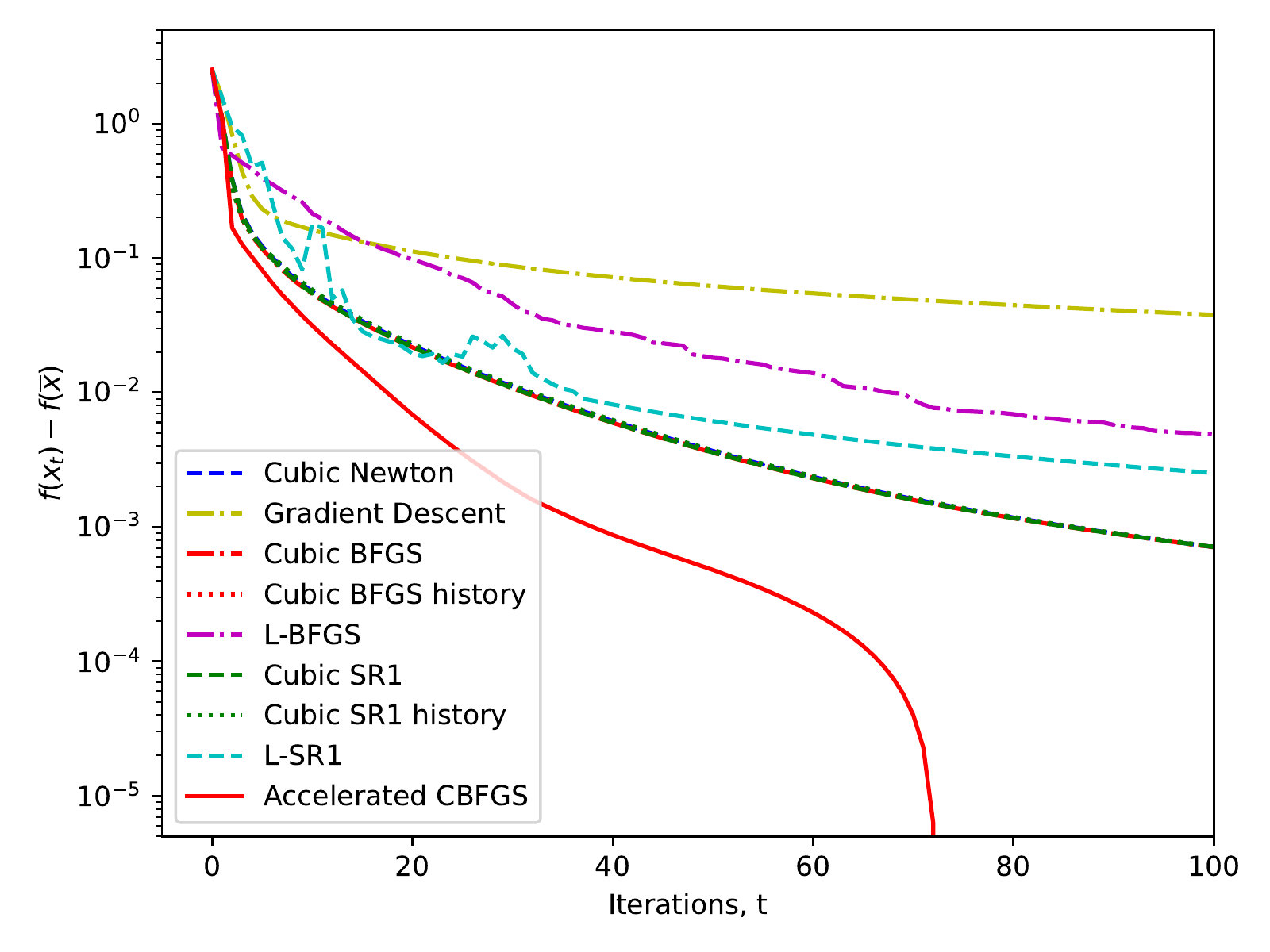}
\includegraphics[width=0.24\textwidth]{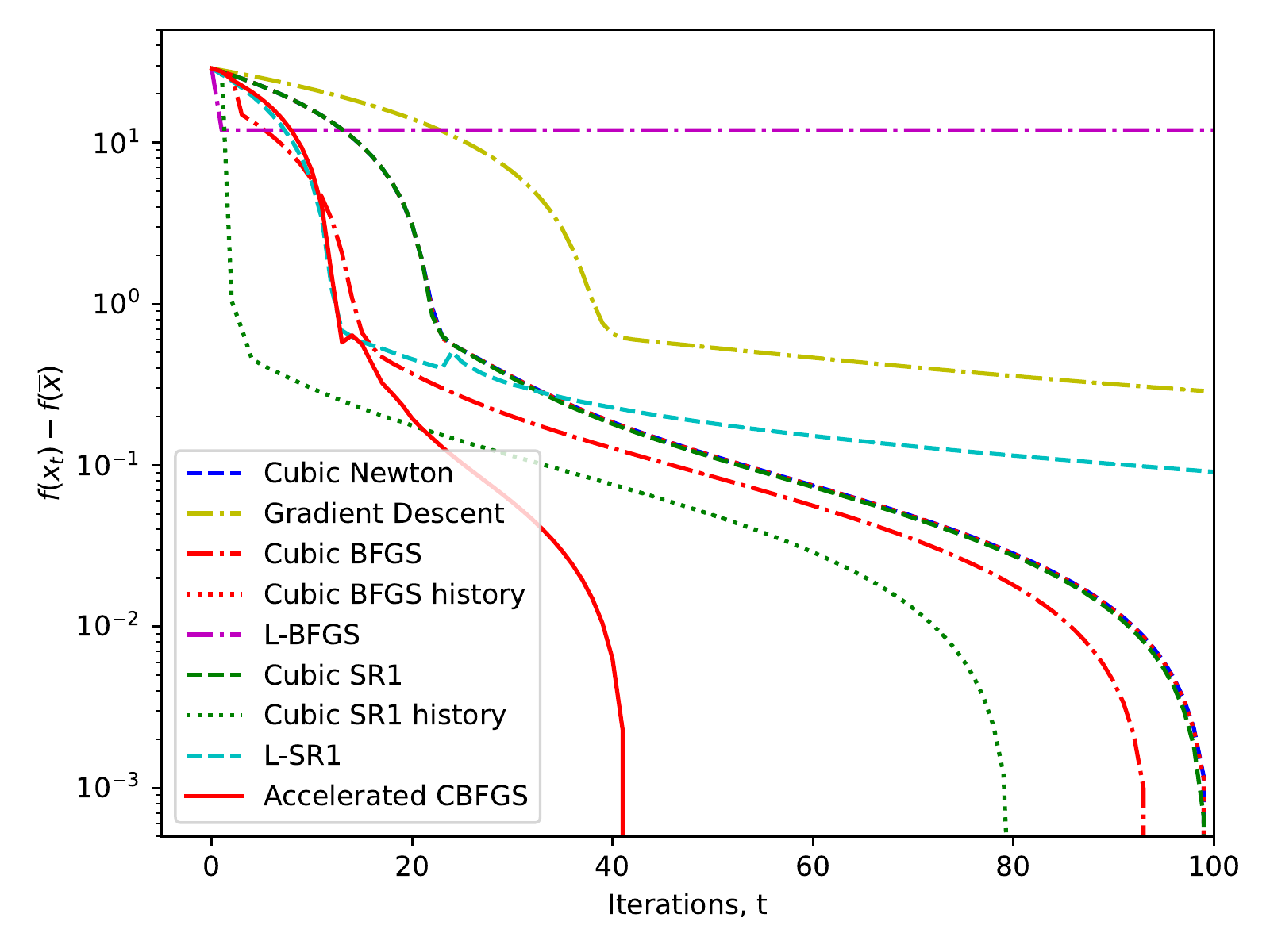}
\includegraphics[width=0.24\textwidth]{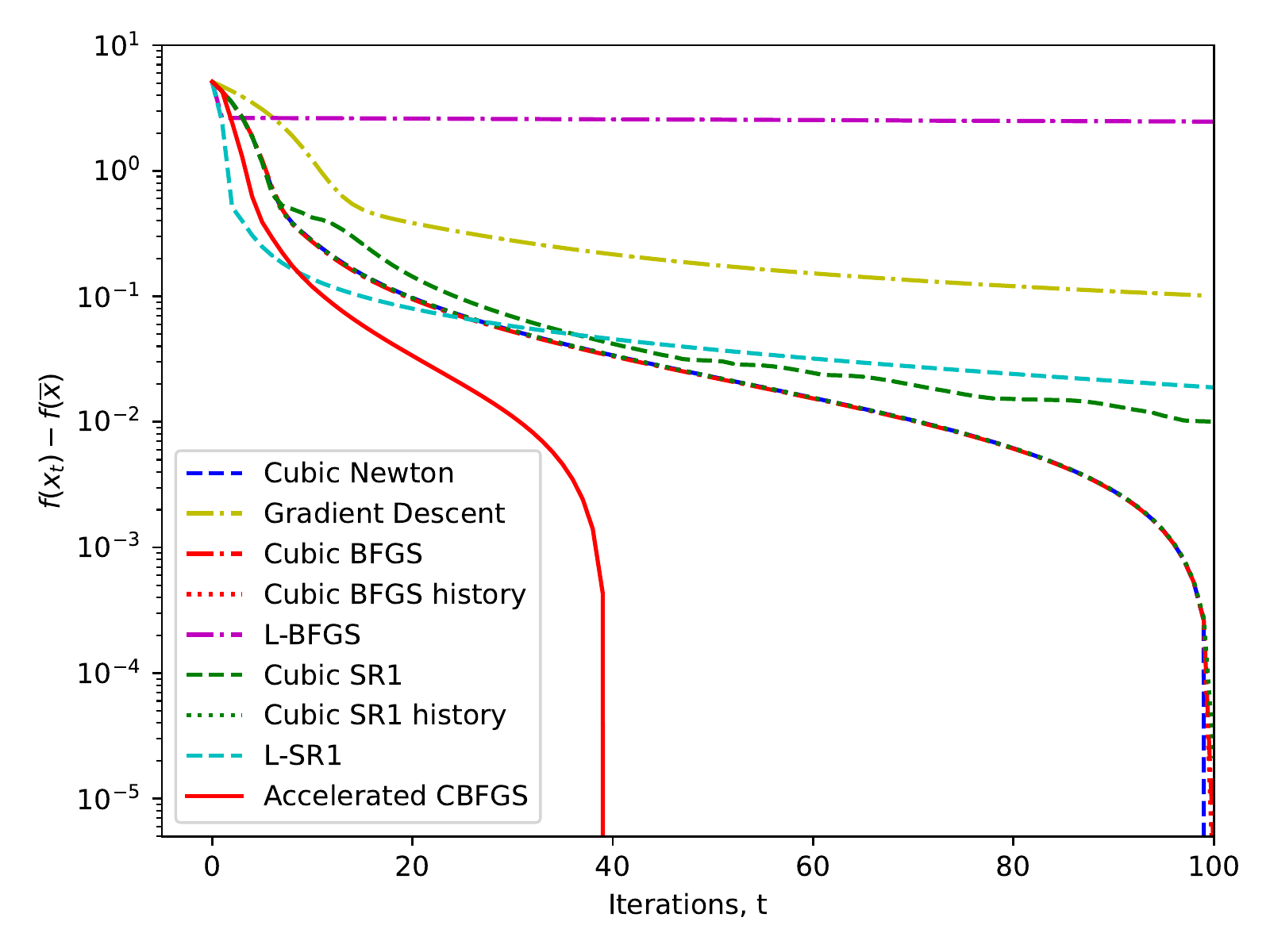}
\includegraphics[width=0.24\textwidth]
{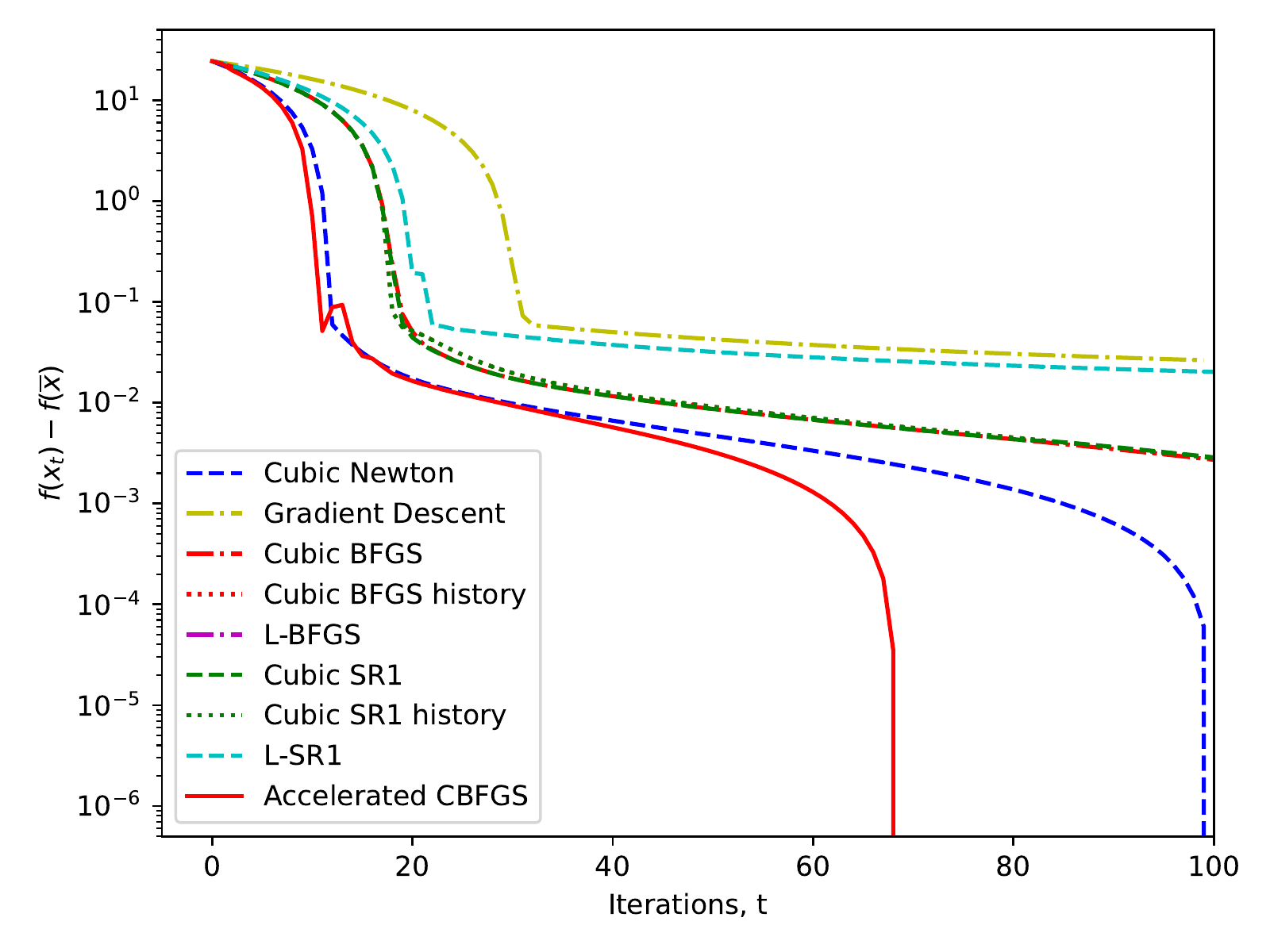}
\\
\subfigure[\texttt{a9a}]{\includegraphics[width=0.24\textwidth]{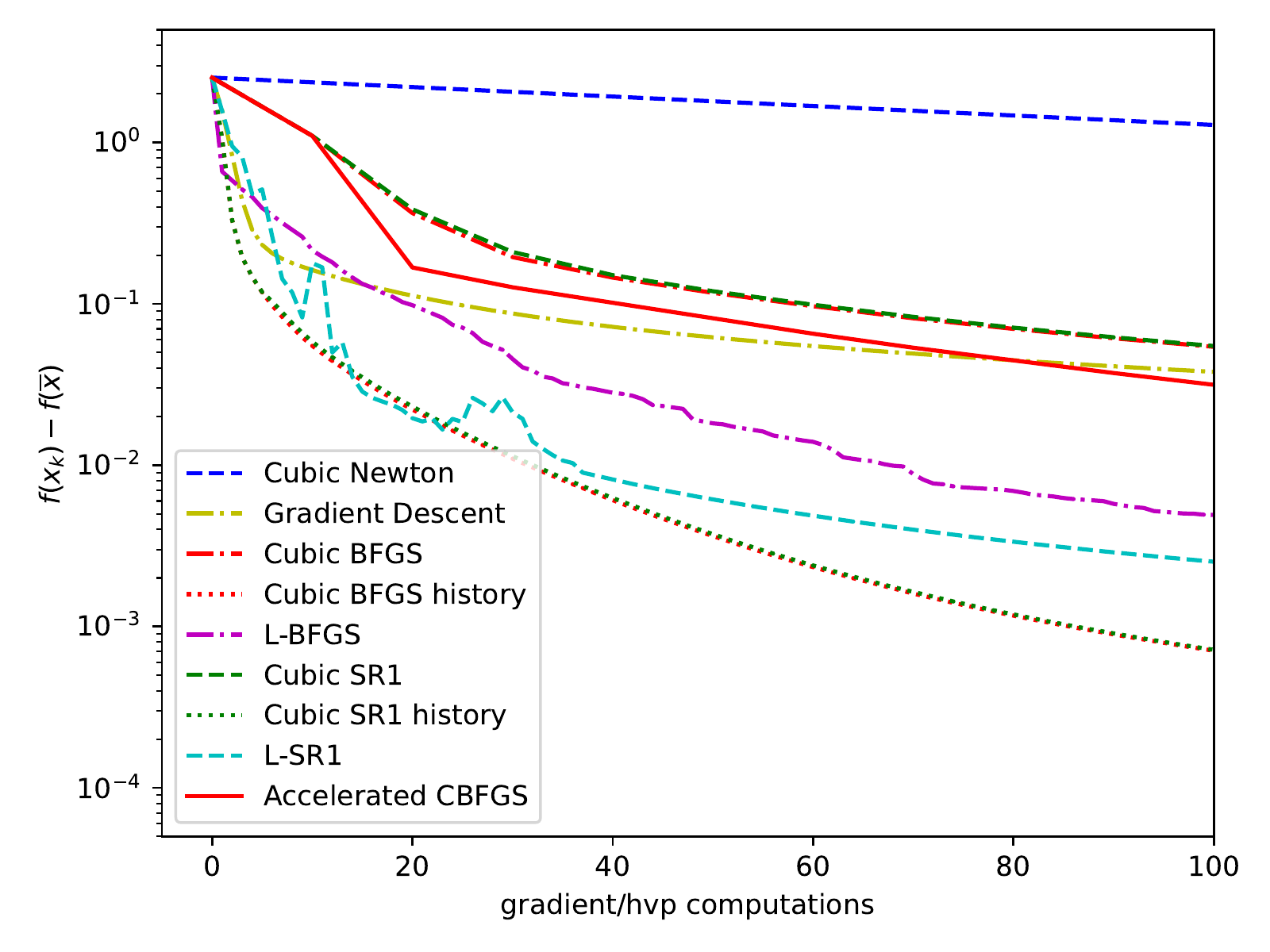}\label{a9a3}}
\subfigure[\texttt{gisette}]{\includegraphics[width=0.24\textwidth]{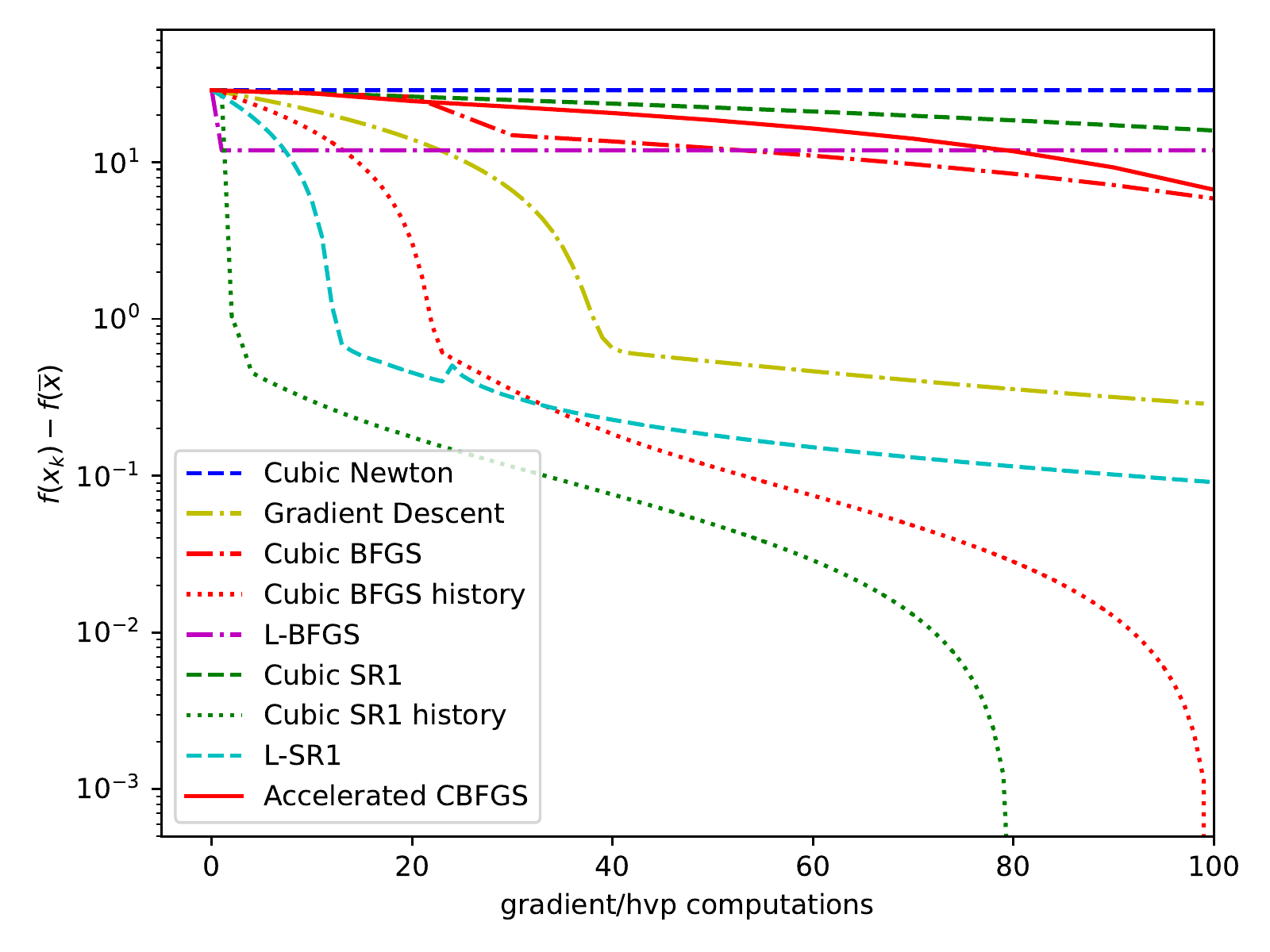}\label{gisette3}}
\subfigure[\texttt{MNIST}]{\includegraphics[width=0.24\textwidth]{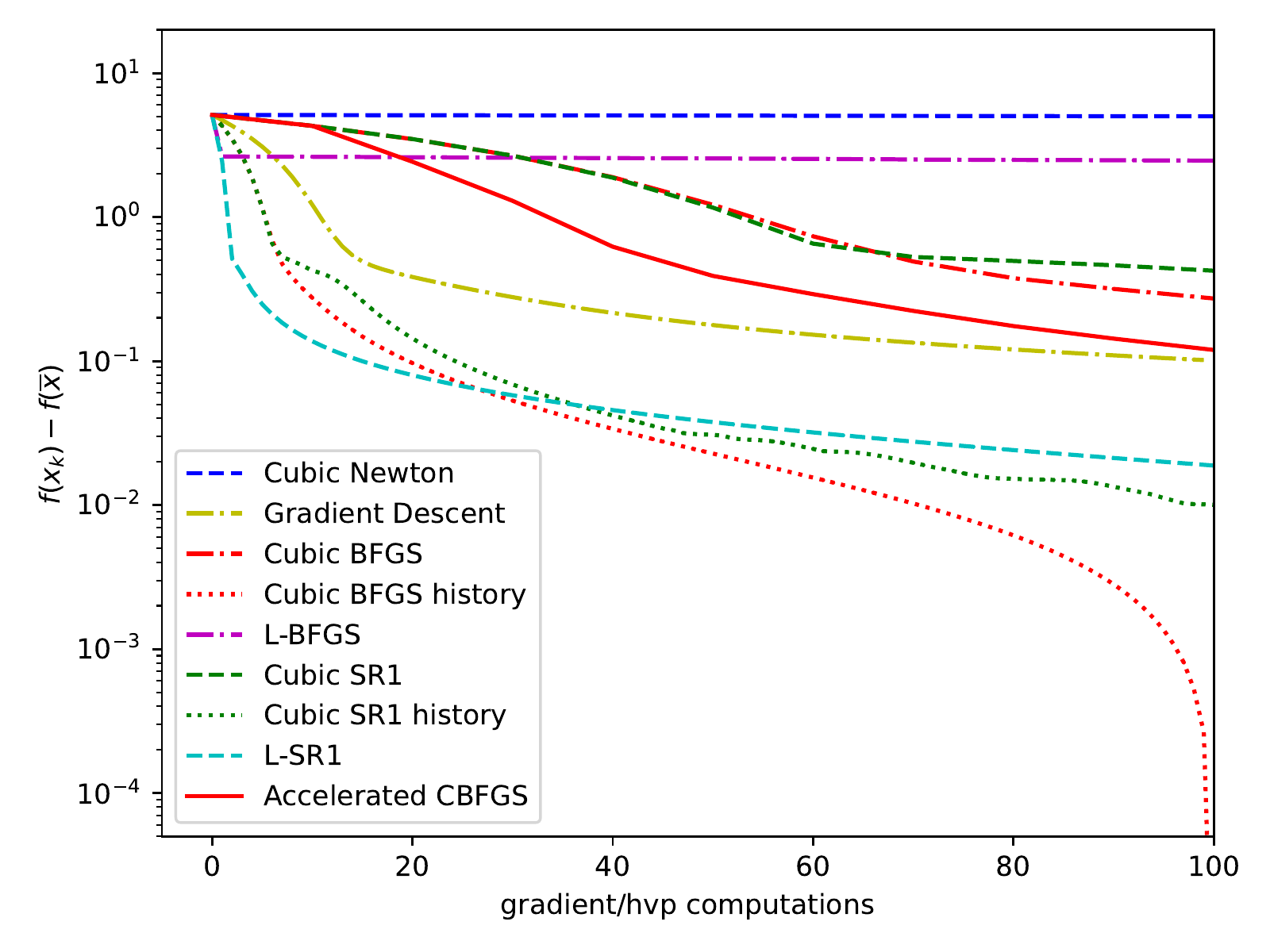}\label{mnist3}}
\subfigure[\texttt{CIFAR-10}]{\includegraphics[width=0.24\textwidth]{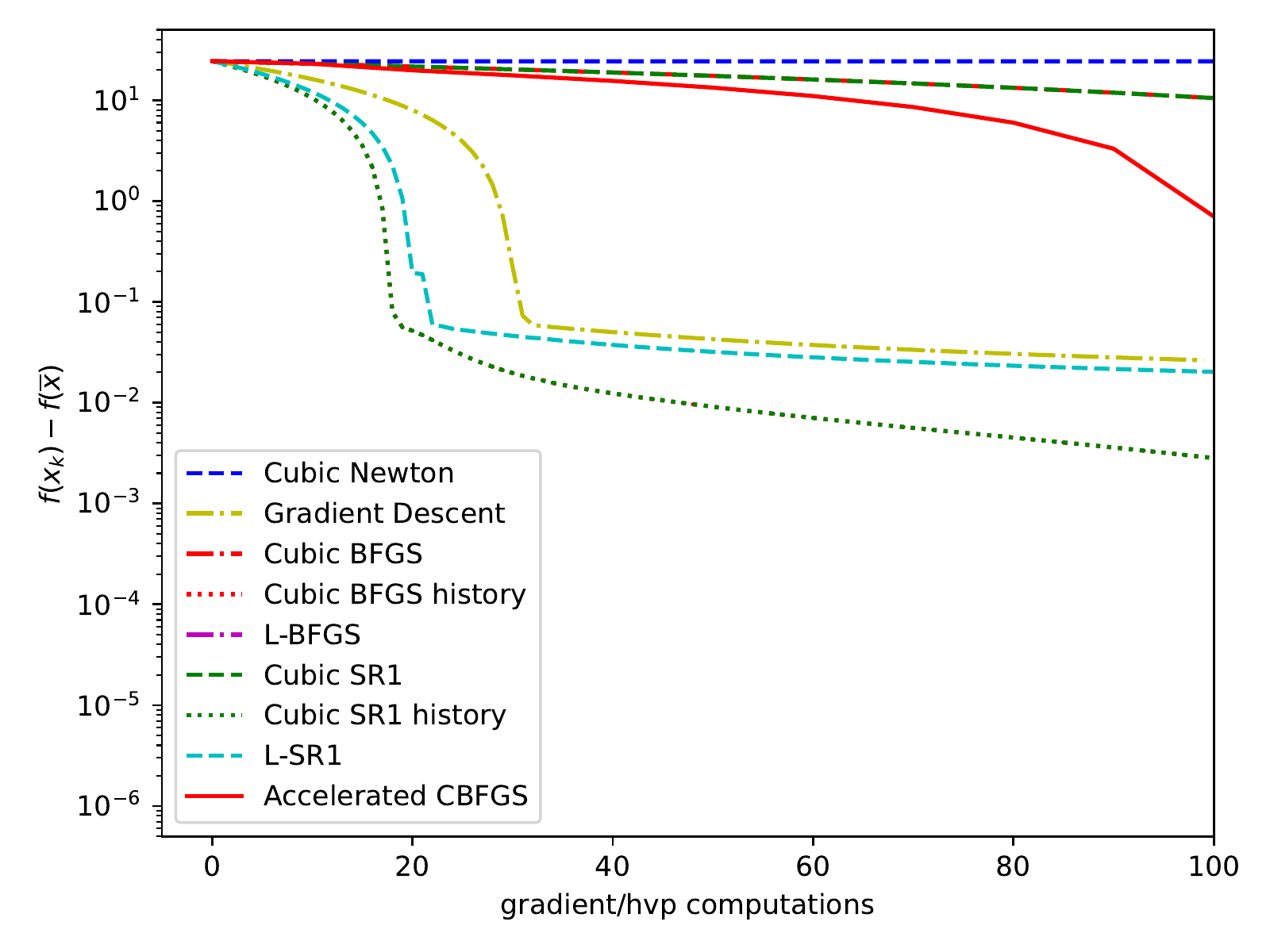}\label{cf3}}
\caption{\add{Comparison of Quasi-Newton methods and Cubic Regularized (Quasi-) Newton methods using theoretical parameters in the convex case on datasets \texttt{a9a}, \texttt{gisette}, \texttt{MNIST}, \texttt{CIFAR-10} respectively.}}
\label{fig3}

\end{figure*}

\begin{figure*}[ht]
\includegraphics[width=0.24\textwidth]{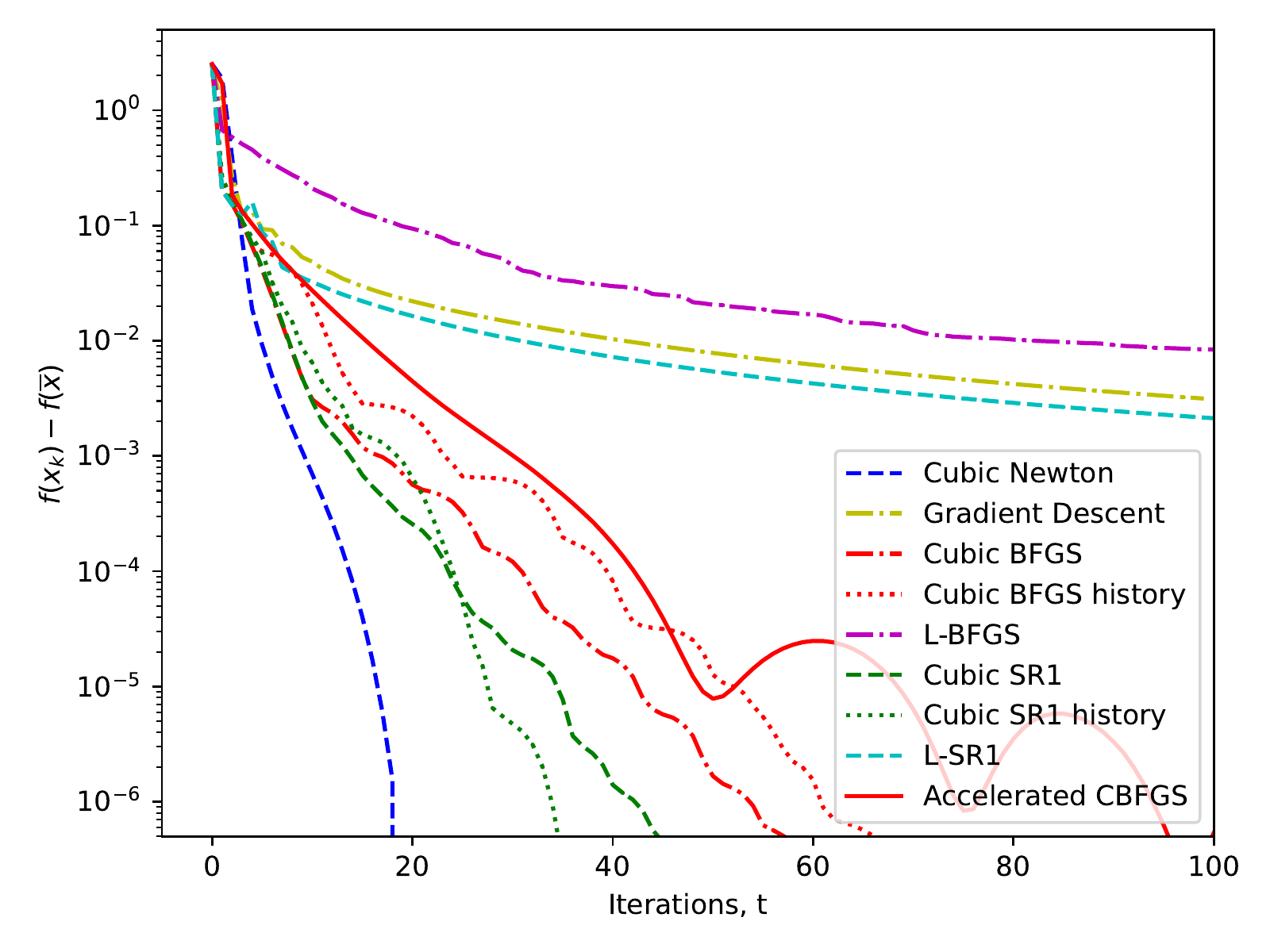}
\includegraphics[width=0.24\textwidth]{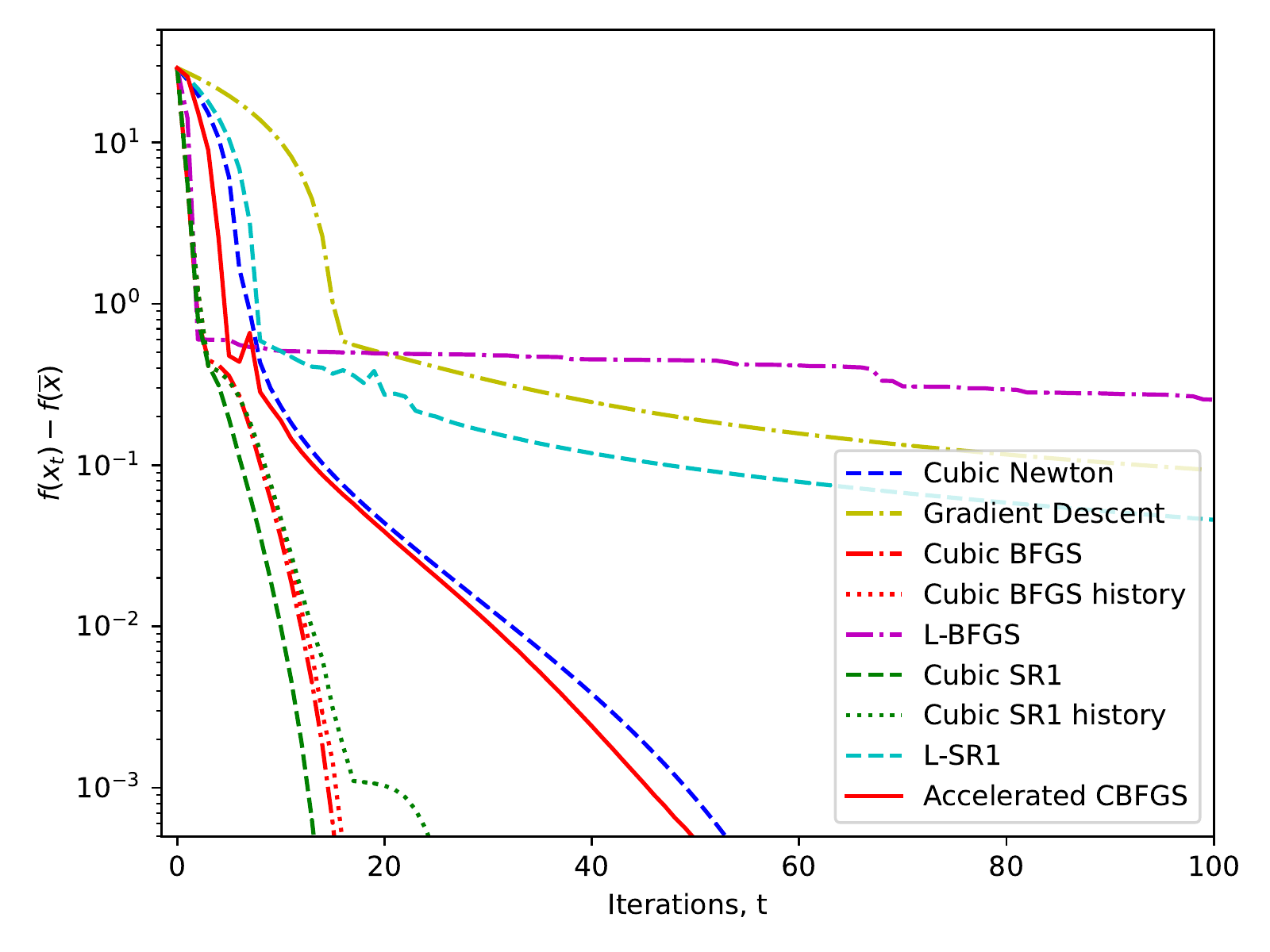}
\includegraphics[width=0.24\textwidth]
{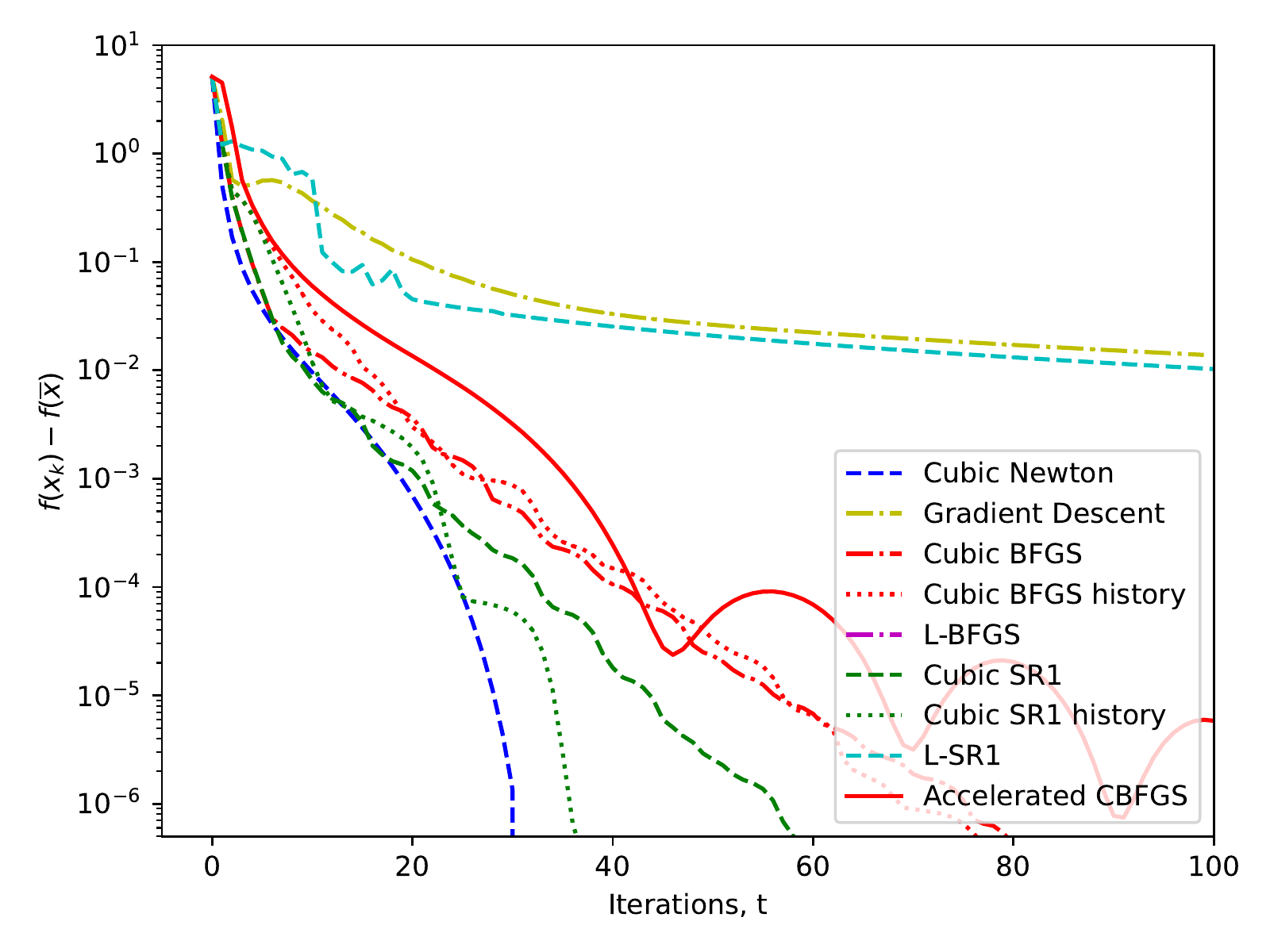}
\includegraphics[width=0.24\textwidth]
{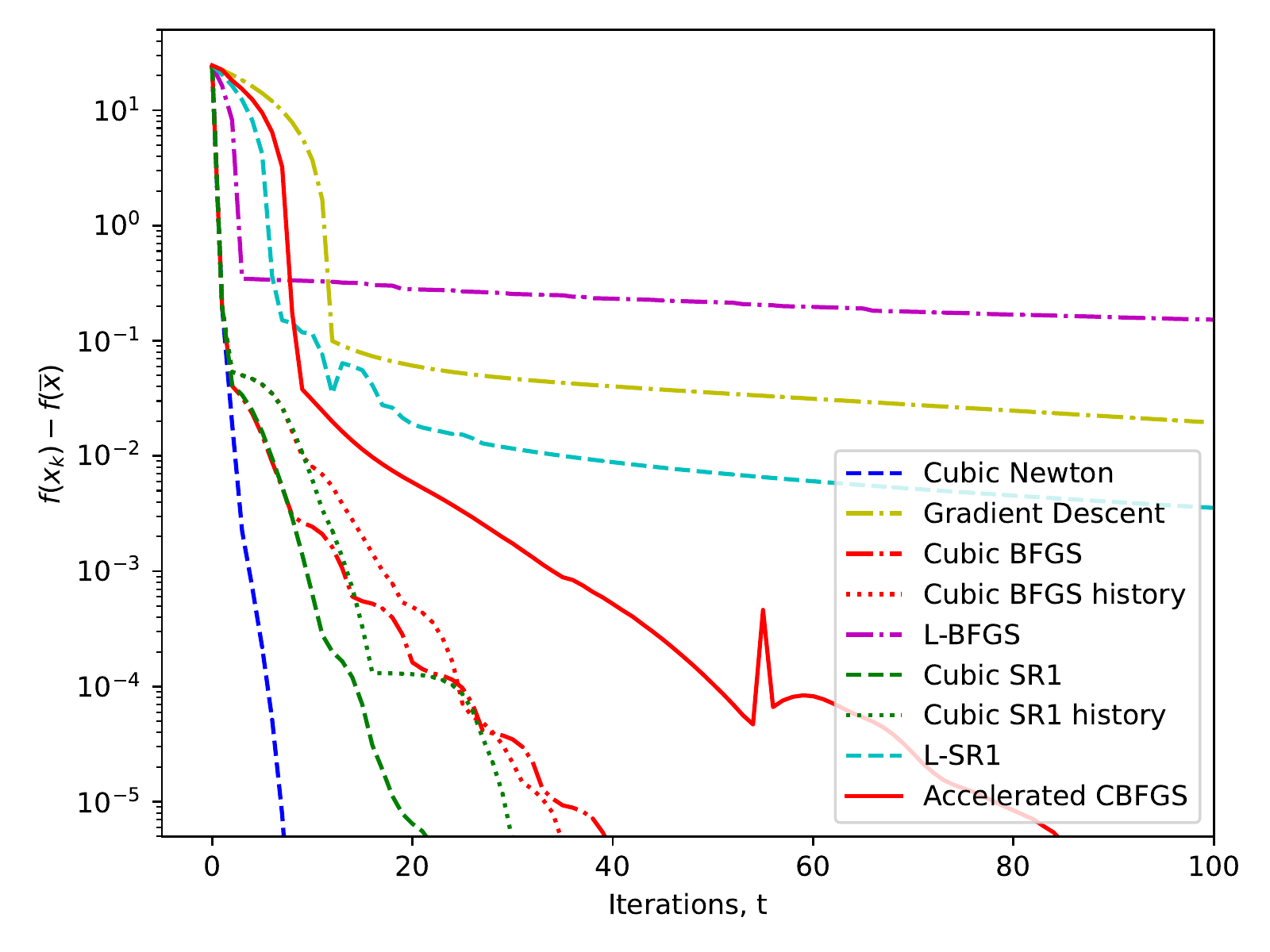}
\\
\subfigure[\texttt{a9a}]{\includegraphics[width=0.24\textwidth]{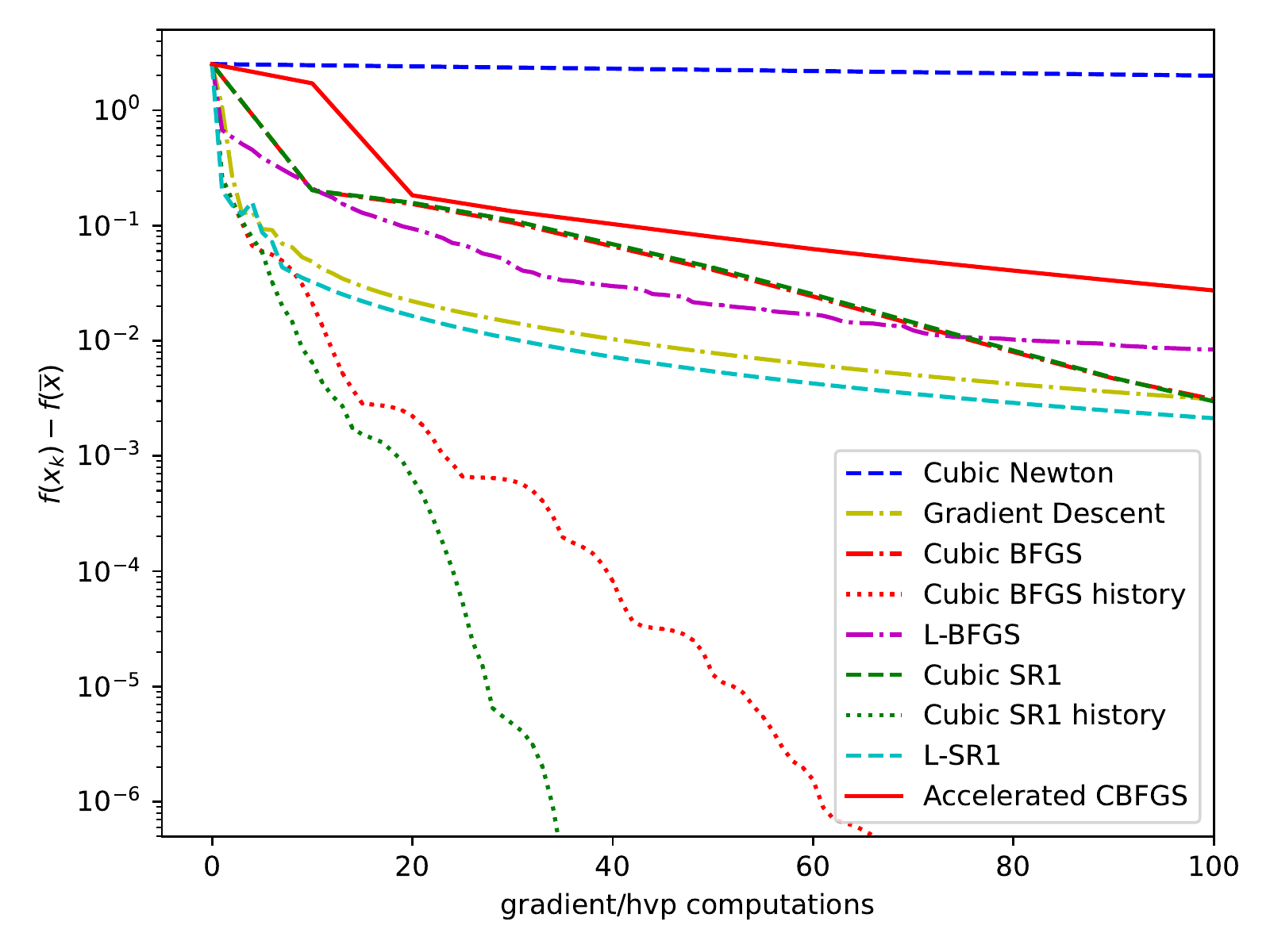}\label{a9a4}}
\subfigure[\texttt{gisette}]{\includegraphics[width=0.24\textwidth]{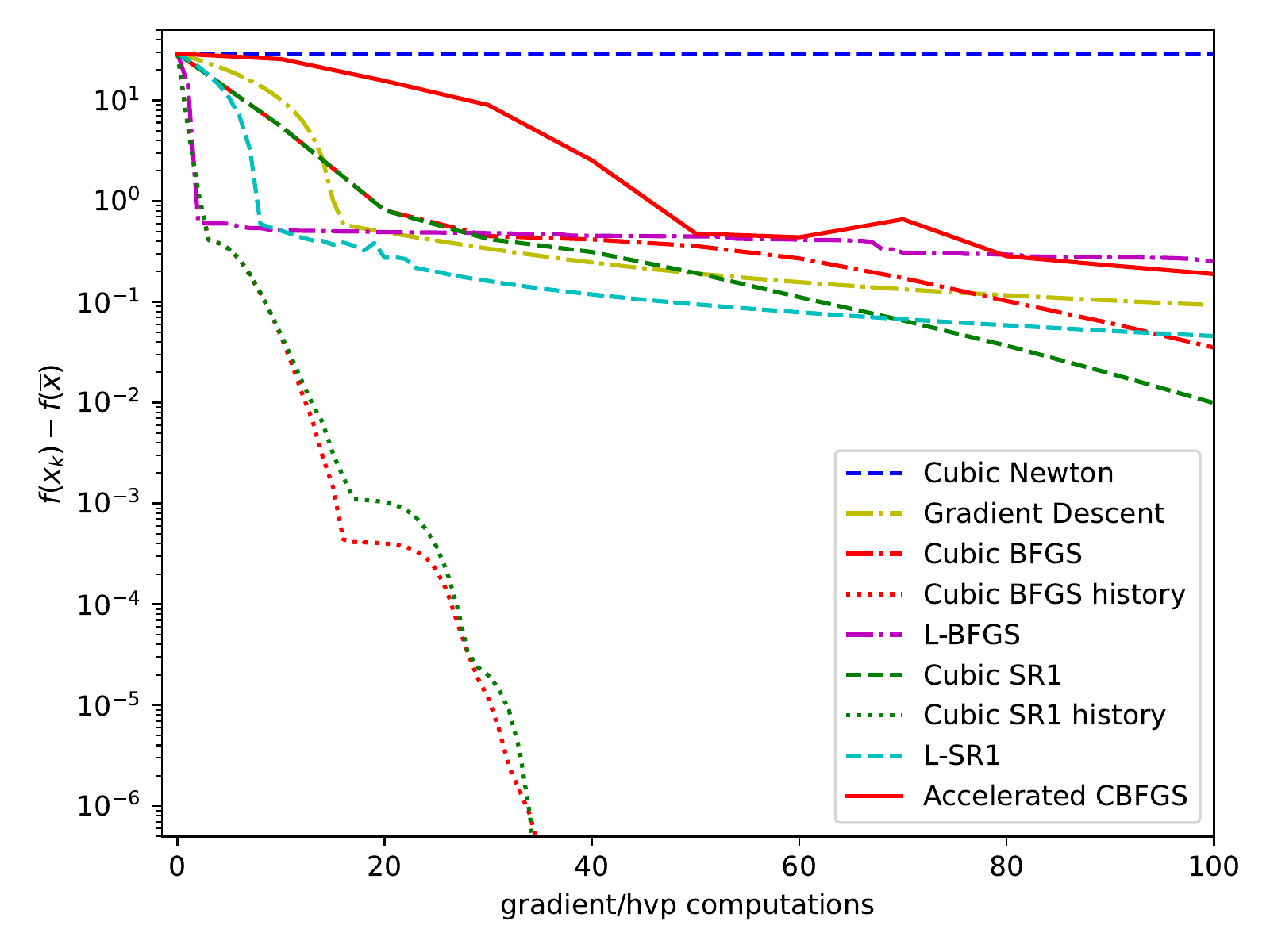}\label{gisette4}}
\subfigure[\texttt{MNIST}]{\includegraphics[width=0.24\textwidth]{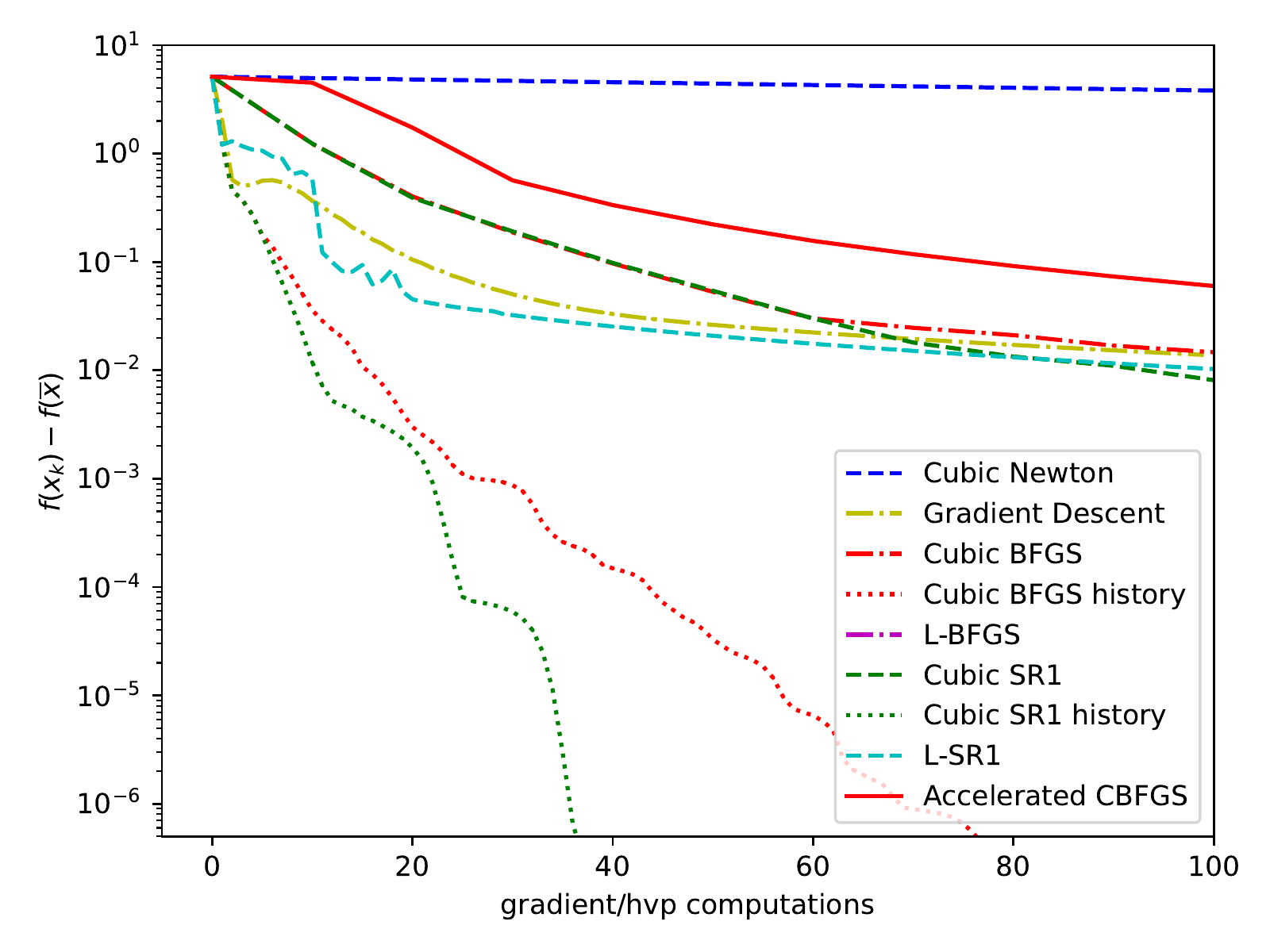}\label{mnist4}}
\subfigure[\texttt{CIFAR-10}]{\includegraphics[width=0.24\textwidth]{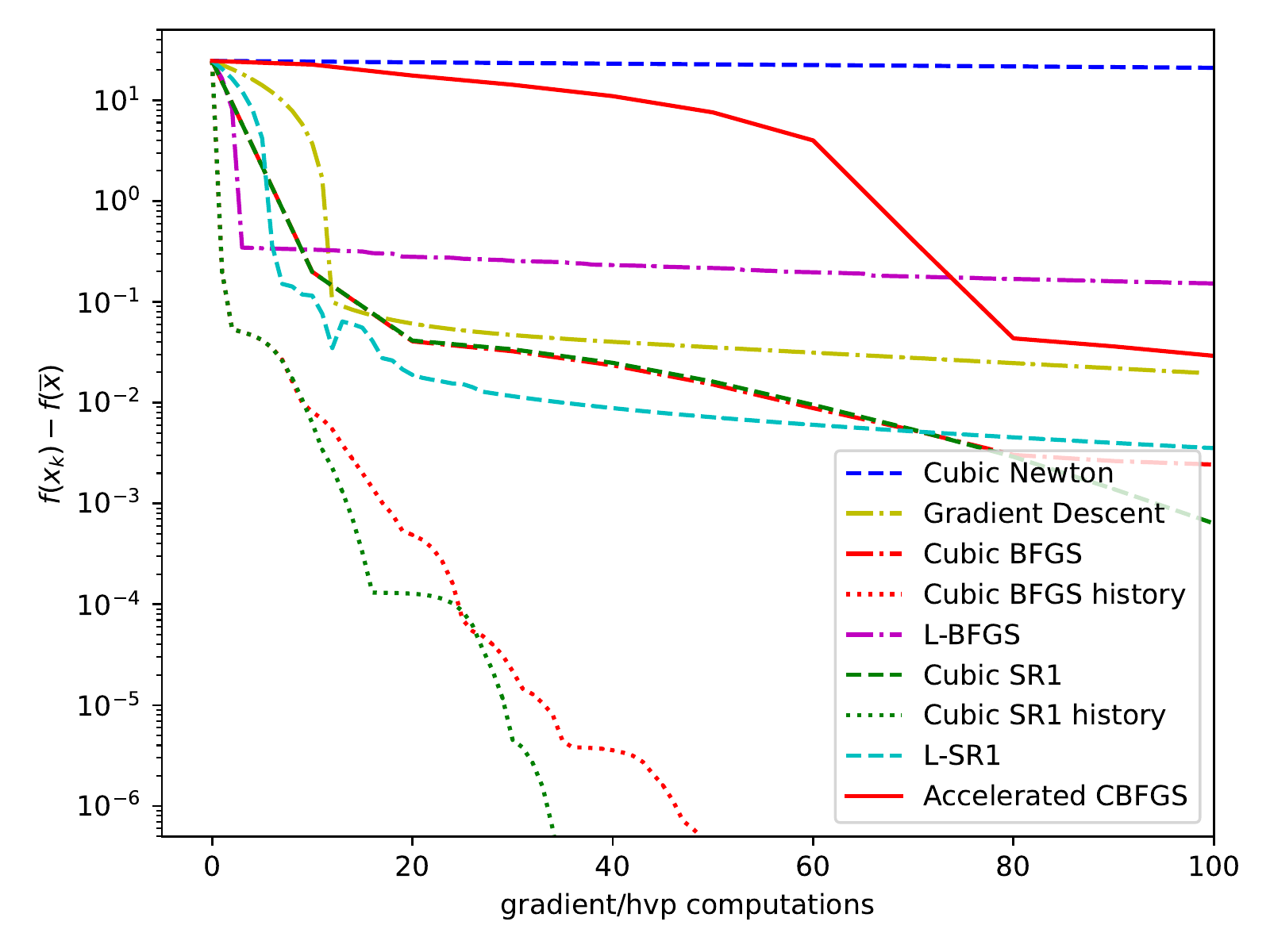}\label{cf4}}
\caption{Comparison of Quasi-Newton methods and Cubic Regularized (Quasi-) Newton methods using scaled parameters in strongly convex case on datasets  \texttt{a9a}, \texttt{gisette}, \texttt{MNIST}, \texttt{CIFAR-10} respectively.}
\label{fig4}
\vskip-10pt
\end{figure*}

\begin{figure*}[ht]
\includegraphics[width=0.24\textwidth]{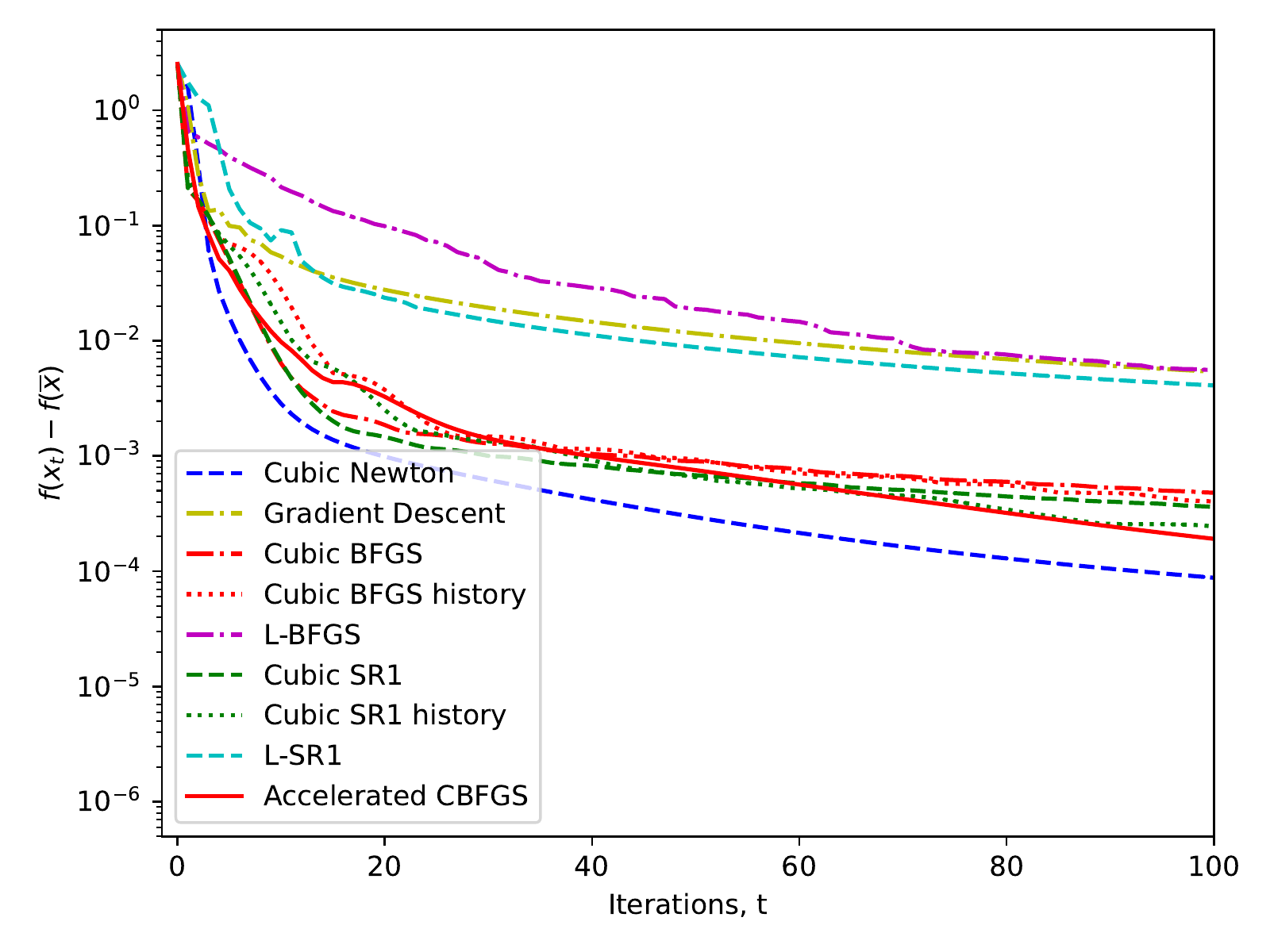}
\includegraphics[width=0.24\textwidth]{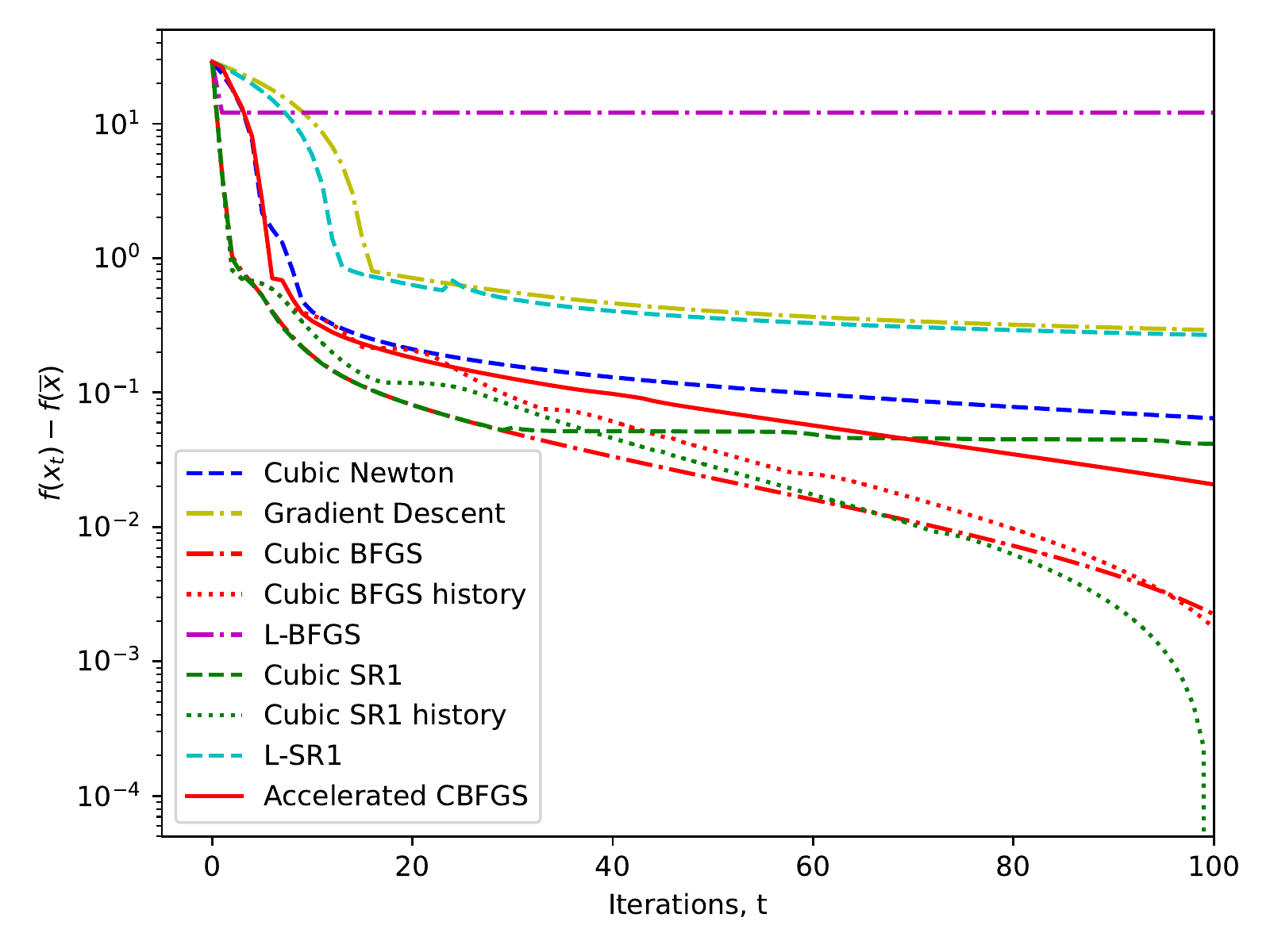}
\includegraphics[width=0.24\textwidth]
{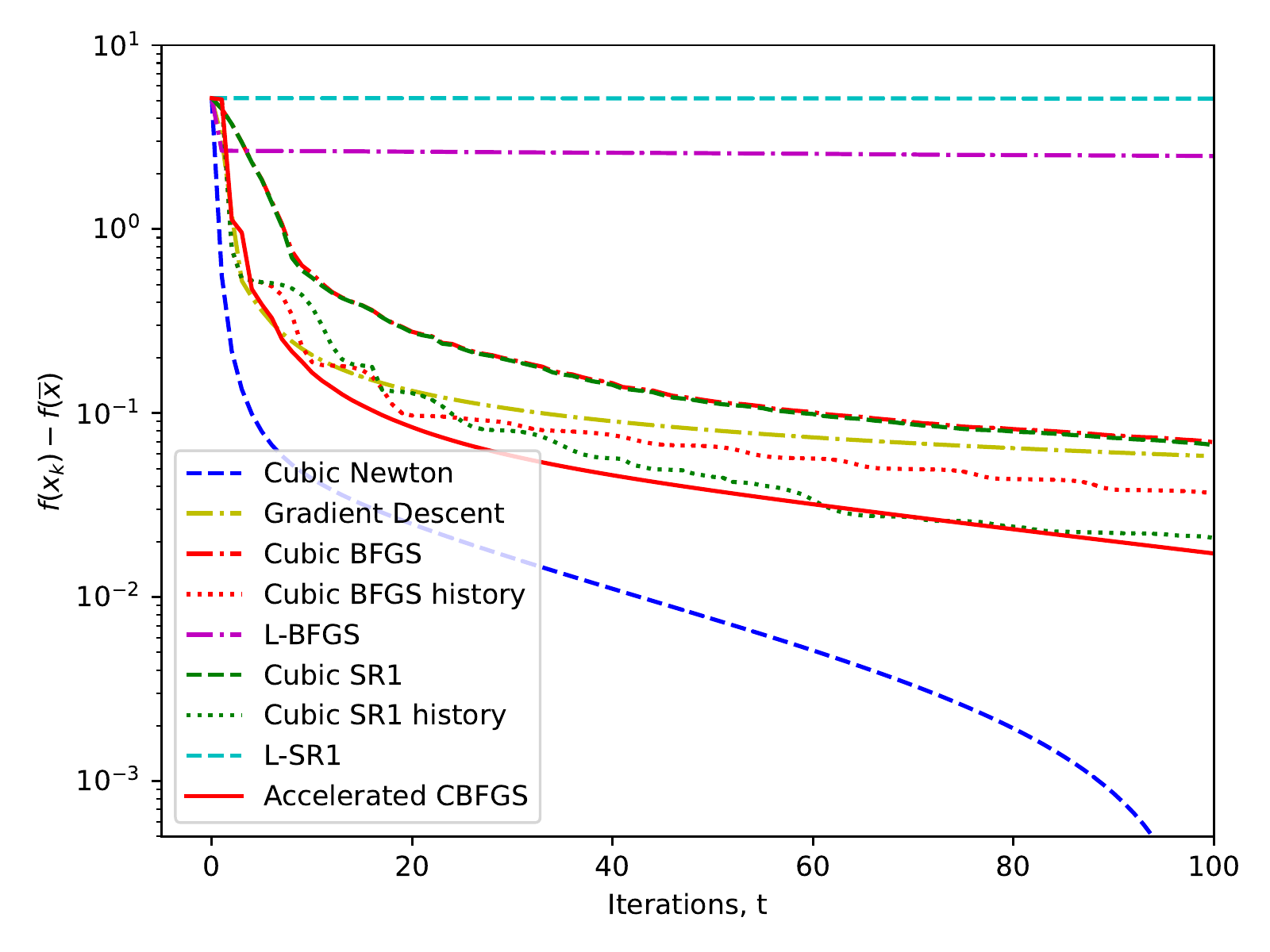}
\includegraphics[width=0.24\textwidth]
{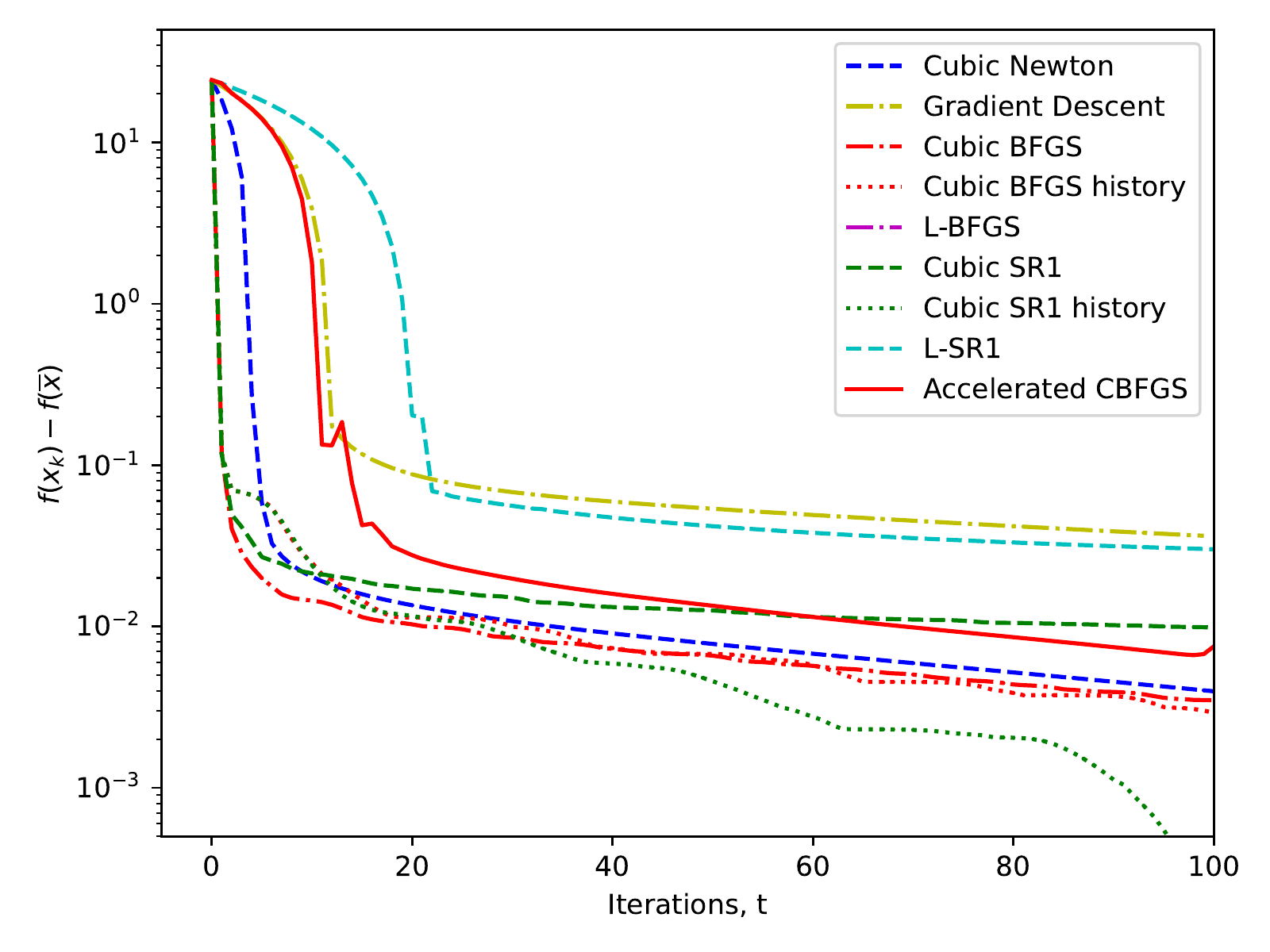}
\\
\subfigure[\texttt{a9a}]{\includegraphics[width=0.24\textwidth]{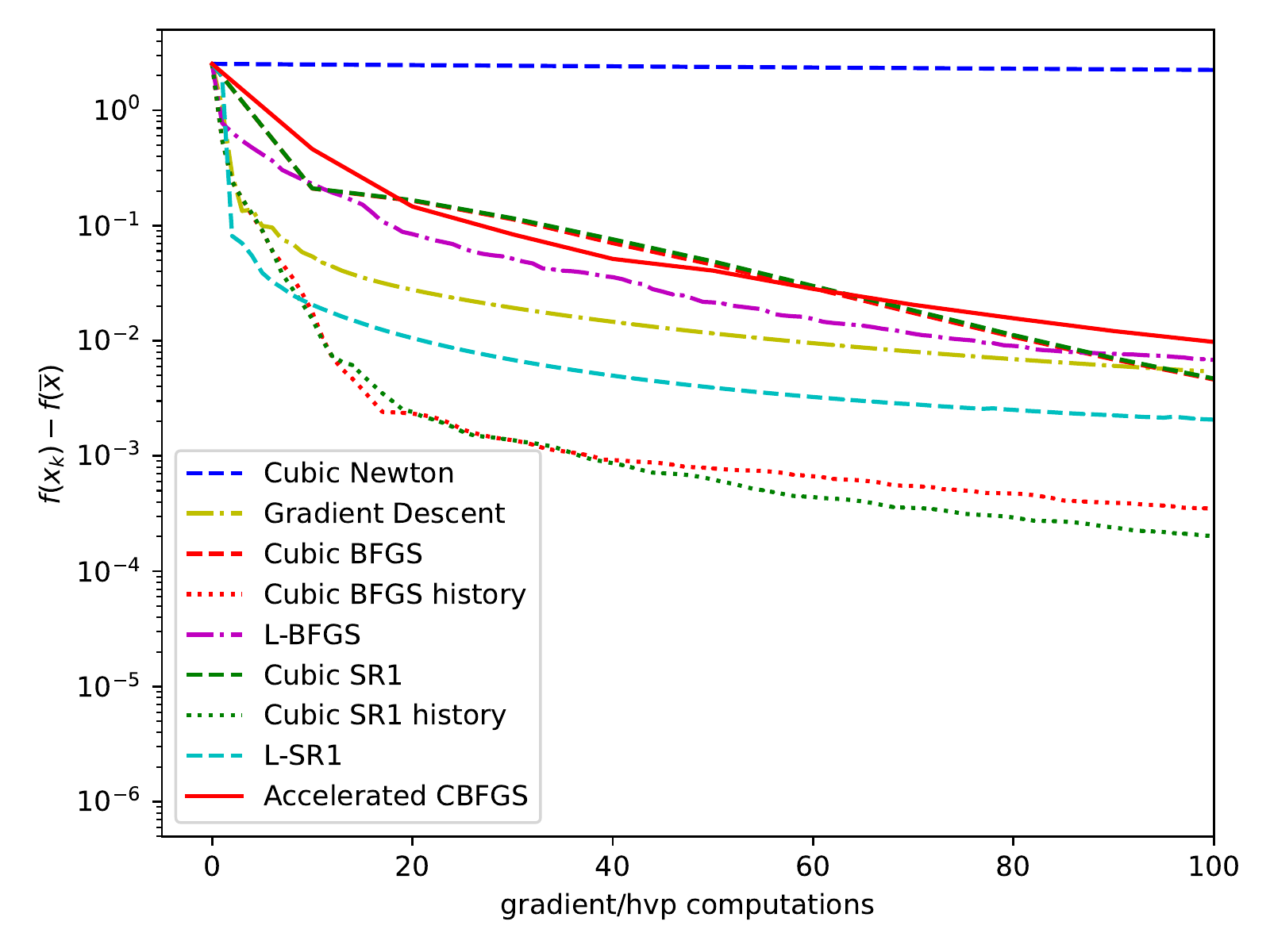}\label{a9a5}}
\subfigure[\texttt{gisette}]{\includegraphics[width=0.24\textwidth]{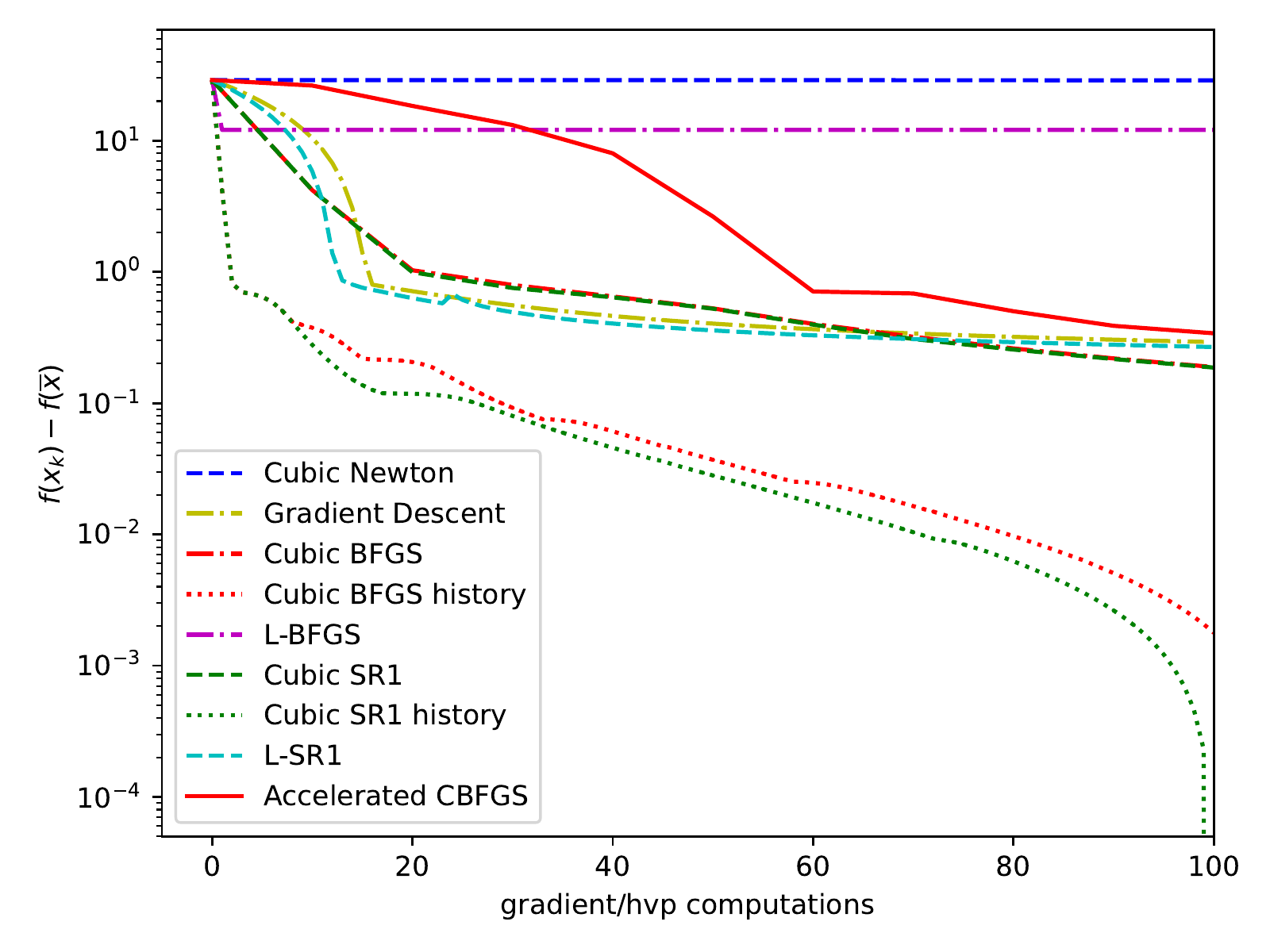}\label{gisette5}}
\subfigure[\texttt{MNIST}]{\includegraphics[width=0.24\textwidth]{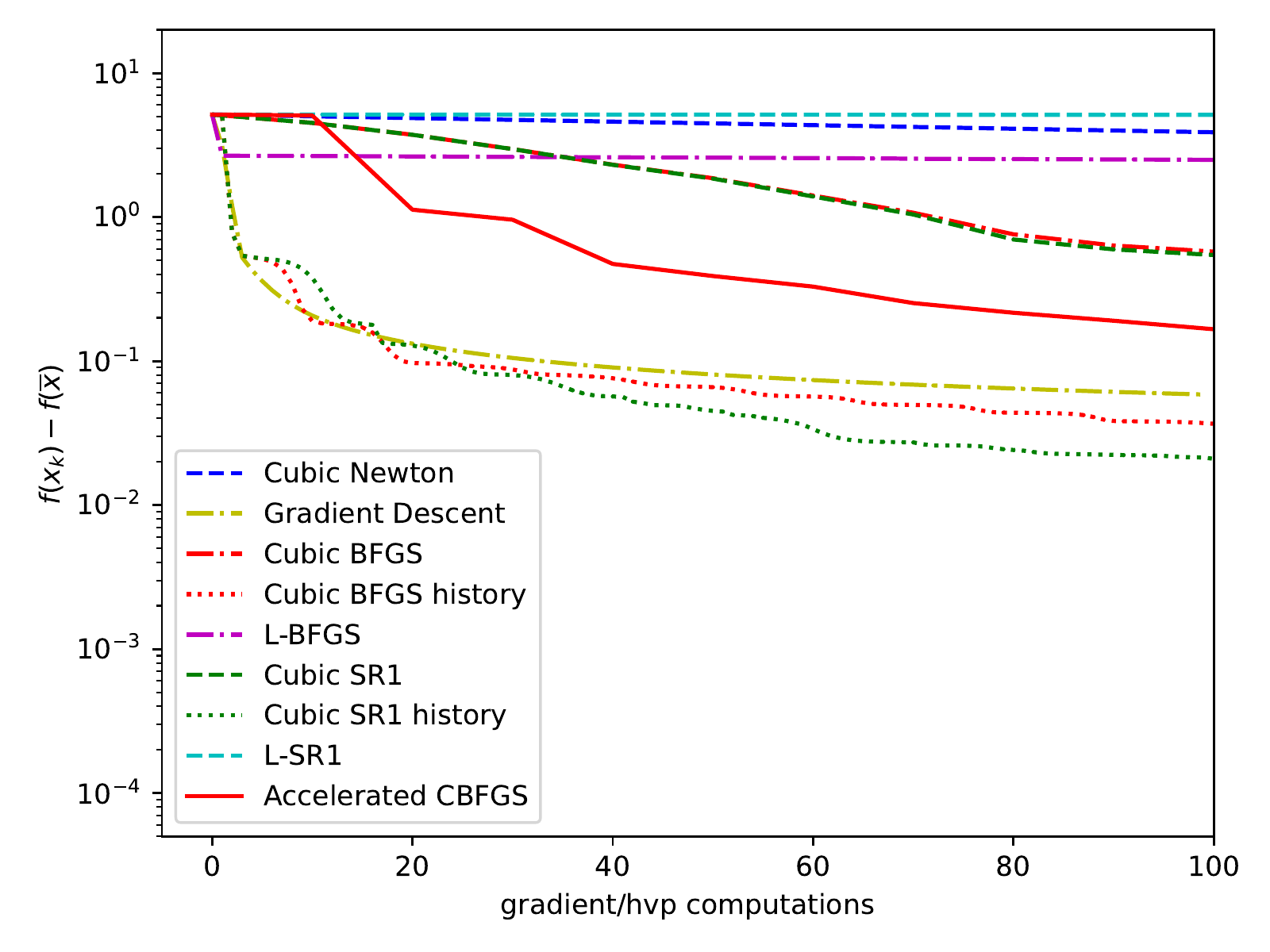}\label{mnist5}}
\subfigure[\texttt{CIFAR-10}]{\includegraphics[width=0.24\textwidth]{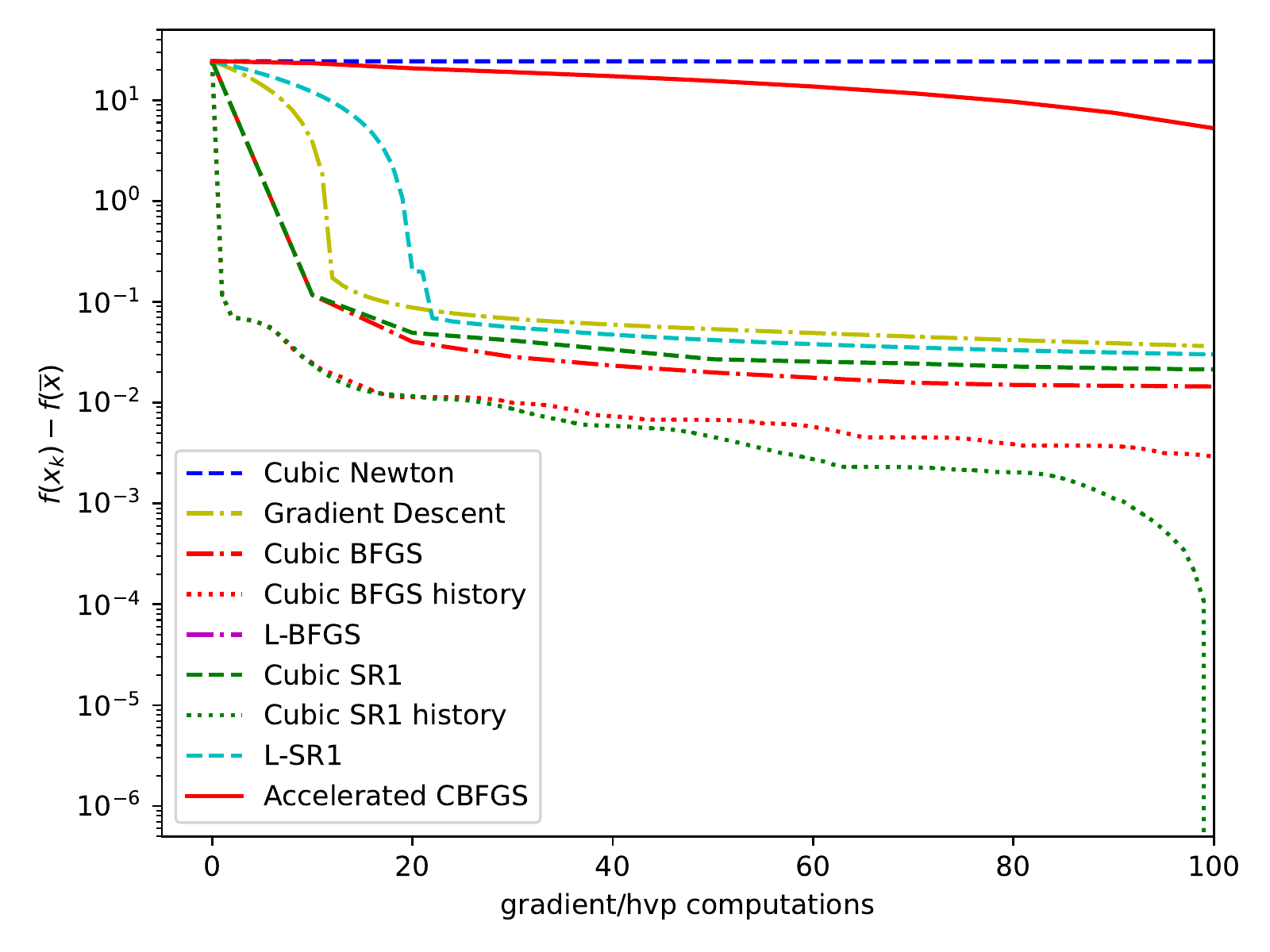}\label{cf5}}
\caption{Comparison of Quasi-Newton methods and Cubic Regularized (Quasi-) Newton methods using scaled parameters in the convex case on datasets  \texttt{a9a}, \texttt{gisette}, \texttt{MNIST}, \texttt{CIFAR-10} respectively.}
\label{fig5}
\vskip-10pt
\end{figure*}

Tuned hyperparameters used for \texttt{a9a} dataset in the convex case shown in Figure \ref{a9a5} are $L_2 = 0.03408$ for Accelerated CBFGS, $lr = 0.125$ for L-BFGS, $lr = 0.00390625$ for L-SR1, $lr = 30$ for Gradient Descent and $L_2 = 0.001704$ for the rest of the methods. 

Tuned hyperparameters used for \texttt{gisette} dataset in the convex case shown in Figure \ref{gisette5} are $L_2 = 0.0055243$ for Cubic Regularized Newton, $L_2 = 0.0224$ for Accelerated CBFGS, $lr = 0.125$ for L-BFGS, $lr = 0.00125$ for L-SR1, $lr = 10$ for gradient descent and $L_2 = 0.00014$ for the rest of the methods.

Tuned hyperparameters used for \texttt{MNIST} dataset in the convex case shown in Figure \ref{mnist5} are $L_2 = 0.001953125$ for Cubic Regularized Newton, $L_2 = 0.0035$ for Accelerated CBFGS, $lr = 0.0006$ for L-BFGS, $lr = 0.00263$ for L-SR1, $lr = 20$ for gradient descent and $L_2 = 0.00035$ for the rest of the methods.

Tuned hyperparameters used for \texttt{CIFAR-10} dataset in the convex case shown in Figure \ref{cf5} are $L_2 = 0.005$ for Cubic Regularized Newton, $L_2 = 0.15$ for Accelerated CBFGS, $lr = 0.0006$ for L-BFGS, $lr =  0.0006$ for L-SR1, $lr = 30$ for gradient descent and $L_2 = 0.0003$ for the rest of the methods.}

\newpage

\section{Solving subproblem } \label{app:subproblem}
In order to solve the subproblem of the Quasi-Newton Cubic step \eqref{eq:cubic_subproblem_solution} we need to find 
$$(\bar{B}_k^m)^{-1} \nabla f_k,$$
where $m$ is the memory size and $\bar{B}_k^m = B_k^m + \delta I + \frac{L}{2}\tau I$. For simplicity, let us denote $B_k^m = B_m$ and $\nabla f(x_k) = g$.

The main goal here is to avoid $d\times d$ matrix inversion and storing full Hessian approximations $B_m$. Quasi-Newton updates usually have low-rank form, so we utilize this fact to apply Woodbury matrix identity \cite{woodbury1949stability, woodbury1950inverting}:
$$ 
(A + UCV)^{-1} = A^{-1} - A^{-1}U(C^{-1} + V A^{-1}U)^{-1}V A^{-1},
$$
where $A, C, U, V$ are conformable matrices and $A$ is $n\times n$, $C$ is $k \times k$, $U$ is $n\times k$ and $V$ is $k\times n$ . 

\paragraph{L-BFGS}

By updating the Hessian approximation $B_k^m$ using L-BFGS formula \eqref{eq:l-bfgs} we can write
\begin{equation}
    B_m = B_{0} + \sum_{i=0}^{m-1} \alpha_i u_i u_i^T + \sum^{m-1}_{i=0} \beta_i v_i v_i^T,
\end{equation}
where $u_i = y_{i}$, $v_i = B_{i}s_{i}$, $\alpha_i = \frac{1}{y_{i}^T s_{i}}$ and $\beta_i = -\frac{1}{s_{i}^T B_{i} s_{i}} =  -\frac{1}{s_{i}^T v_i}$. \\
From Woodbury matrix identity and BFGS we can write  
$$
(\bar{B}_m)^{-1} g = g^T (\bar{B}_0 + W^T C W)^{-1} g = (\bar{B}_0)^{-1} g 
 - (\bar{B}_0)^{-1} W^T (C^{-1} + W (\bar{B}_0)^{-1} W^T)^{-1} W (\bar{B}_0)^{-1} g,
$$
where $\bar{B}_0 + W^T C W = \bar{B}_0 + \sum_{i=0}^{m-1} \alpha_i u_i u_i^T + \sum^{m-1}_{i=0} \beta_i v_i v_i^T$ and $\bar{B}_0$ is $d \times d$, $W$ is $2m \times d$, $C$ is $d \times d$.

\paragraph{L-SR1 update.}
Practically, L-SR1 update is very promising, that is why we include L-SR1 for practical comparison.

In the case of using  L-SR1 formula to update the Hessian approximation $B_m$ we write 
\begin{equation}
    B_m = B_0 + \sum_{i=0}^{m-1} \alpha_i u_i u_i^T,
\end{equation}
where $u_i = y_i - v_i$, $u_i = B_is_i$ and $\alpha_i= \frac{1}{u_i^Ts_i}$.
By using Woodbury's identity we can write
$$
(\bar{B}_m)^{-1} g =  (\bar{B}_0 + W^T C W)^{-1} g = (\bar{B}_0)^{-1} g 
 - (\bar{B}_0)^{-1} W^T (C^{-1} + W (\bar{B}_0)^{-1} W^T)^{-1} W (\bar{B}_0)^{-1} g,
$$
where $\bar{B}_0 + W^T C W = \bar{B}_0 + \sum_{i=0}^{m-1} \alpha_i u_i u_i^T$ and $\bar{B}_0$ is $d \times d$, W is $m\times d$, $C$ is $d\times d$. 

\paragraph{Complexities}

We assume that we initialized $B_0 = cI$ for some constant $c > 0$.
If we calculate the inverse $(C^{-1} + W (\bar{B}_0)^{-1} W^T)^{-1}$ on each step of the line-search the total complexity of line-search will be $\tilde O(m^3)$. But it is unnecessary, we can use one SVD decomposition of $C^{-1} + W(\bar{B}_0^{\tau_0})^{-1} W^T$ for the starting point $\tau_0$:
$$C^{-1} + W(\bar{B}_0^{\tau_0})^{-1} W^T = QZ_0Q^{-1},$$
where $Z_0$ is a diagonal $2m \times 2m$ matrix and $Q$ is orthogonal $2m \times 2m$ matrix for L-BFGS and $Z_0$ is a diagonal $m \times m$ matrix and $Q$ is orthogonal $m \times m$ matrix for L-SR1 .

Then, to calculate the inverse for $\tau_1$ we do:

$$(QZ_0Q^{-1} + \tfrac{L}{2}\tau_1 I - \tfrac{L}{2}\tau_0 I)^{-1} = (QZ_0Q^{-1} + \tfrac{L}{2}(\tau_1-\tau_0) QIQ^{-1})^{-1} = Q (Z_0 - (\tau_1-\tau_0)I)^{-1}Q^{-1}.$$

Thus, each  calculation of $(\bar{B}_m^{\tau_k})^{-1} \nabla f_k$ for new $\tau_k$ from the line-search takes only $O(m^2)$ computations, and the total complexity of line-search procedure to reach accuracy $\tilde{\e}$ is $O(m^2)\log \tilde{\e}^{-1}$. The total complexity of the solution of Quasi Newton Cubic Regularized subproblem is $(m^2d + m^2\log \tilde{\e}^{-1})$, where the first term comes from the calculation of pairs $u_i, v_i$ and constants $\alpha_i, \beta_i$ for $0 \leq i \leq m-1.$ 
%\newpage

\section{Alternative variant of Adaptive Inexact Cubic Regularized for star-convex and $\mu$-strongly star-convex functions.}
In this section, we assume more general class of functions for Adaptive Inexact \CRN methods. But the inexactness criteria is different from the main paper and a little bit harder to adapt to it. This Section was in the main paper in the first version of the paper. 

\begin{assumption}
Let $x^{\ast}$ be a minimizer of the function $f$. For $\mu\geq 0$, the function $f$ is \textbf{$\mu$-strongly star-convex} with respect to $x^{\ast}$ if for all 
$x \in \R^d$
and $\forall \al\in [0,1]$
\begin{equation}
    \label{eq:star-convex-strong}
     f\ls \al x + (1-\al) x^{\ast}\rs \leq   \al f(x) + (1-\al) f(x^{\ast})  \\
       - \frac{\al (1-\al)\mu}{2}\|x-x^{\ast}\|^2.
\end{equation}
If $\mu = 0$ then the function $f$ is \textbf{star-convex} with respect to $x^{\ast}$.
\end{assumption}

\begin{assumption}
\label{ass:dupdlow}
A positive semidefinite matrix $B_x \in \R^{d\times d}$ is an $(\delta^{up},\delta_{low})$-inexact Hessian for the function $f(x)$ at the point $x \in \R^d$ if 
\begin{equation}
\label{eq:delta_alt}
    - \delta_{low} I \preceq \nabla^2 f(x)- B_x \preceq  \delta^{up} I.
\end{equation}
\end{assumption}
This assumption is used for analysis and convergence theorems of star-convex and $\mu$-strongly star-convex functions.

Note, that in \cite{ghadimi2017second} the authors used $(\delta^{up},\delta_{low})$-inexact Hessian with $\delta^{up} = 0$, and in \cite{agafonov2020inexact} the authors used  $(\delta^{up},\delta_{low})$-inexact Hessian with $\delta_{low} = \delta^{up}=\delta$. Hence, our assumption is a generalization of previous approaches. Later in this section, we will clarify what is the main difference between $\delta_{low}$ and $\delta^{up}$ and why it is helpful to differ them.

Let us show that regularized inexact Taylor approximation with $(\delta^{up},\delta_{low})$-inexact Hessian is close to the function $f(x)$ by finding upper and lower bounds.

\begin{lemma}
\label{lem:cubic_bound_alt}
For the function $f(x)$ with $L_2$-Lipschitz-continuous Hessian and $B_x$ is $(\delta^{up},\delta_{low})$-inexact Hessian , for any $x,y \in \R^d$ we have
\begin{align}
\label{eq:func_upper_alt}
    f(y) \leq \phi_{x}(y) + \frac{L_2}{6}\|y-x\|^3 + \frac{\delta^{up}}{2}\|y-x\|^2,\\
    \label{eq:func_lower_alt}
    \phi_{x}(y) \leq f(y) + \frac{L_2}{6}\|y-x\|^3 + \frac{\delta_{low}}{2}\|y-x\|^2.
\end{align}
\end{lemma}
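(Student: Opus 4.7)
The plan is to insert the exact Taylor approximation $\Phi_x(y)$ as a bridge between $f(y)$ and $\phi_x(y)$, so each claimed bound splits into a cubic piece coming from Hessian Lipschitzness and a quadratic piece coming from Assumption \ref{ass:dupdlow}. The standard consequence of $L_2$-Lipschitz continuity of $\nabla^2 f$ is the two-sided estimate
$$-\tfrac{L_2}{6}\|y-x\|^3 \;\leq\; f(y) - \Phi_x(y) \;\leq\; \tfrac{L_2}{6}\|y-x\|^3,$$
and the identity $\Phi_x(y) - \phi_x(y) = \tfrac{1}{2}\langle (\nabla^2 f(x) - B_x)(y-x),\, y-x\rangle$ combined with the matrix inequalities $-\delta_{low} I \preceq \nabla^2 f(x) - B_x \preceq \delta^{up} I$ from \eqref{eq:delta_alt} gives
$$-\tfrac{\delta_{low}}{2}\|y-x\|^2 \;\leq\; \Phi_x(y) - \phi_x(y) \;\leq\; \tfrac{\delta^{up}}{2}\|y-x\|^2.$$

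For inequality \eqref{eq:func_upper_alt} I would add the upper sides of these two chains, i.e.\ $f(y) - \Phi_x(y) \leq \tfrac{L_2}{6}\|y-x\|^3$ together with $\Phi_x(y) - \phi_x(y) \leq \tfrac{\delta^{up}}{2}\|y-x\|^2$, and rearrange to obtain the claim. For \eqref{eq:func_lower_alt} I would add the matching lower sides, namely $\Phi_x(y) - f(y) \leq \tfrac{L_2}{6}\|y-x\|^3$ together with $\phi_x(y) - \Phi_x(y) \leq \tfrac{\delta_{low}}{2}\|y-x\|^2$, and rearrange.

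There is no real obstacle here; the argument is a careful triangle inequality with matched directions. The only subtle point is tracking which side of the spectral bound feeds which side of the final estimate: $\delta^{up}$ must appear in the upper bound on $f(y)$ (overestimating $f$ corresponds to $B_x$ underestimating $\nabla^2 f$) and $\delta_{low}$ in the lower bound. This is precisely the asymmetry that Assumption \ref{ass:dupdlow} is designed to capture, and it mirrors Lemma \ref{lem:cubic_bound_acc}, except that the directional-norm bound used there is replaced by a two-sided spectral bound, which is why both constants $\delta^{up}$ and $\delta_{low}$ can be separated and appear independently in the two inequalities.
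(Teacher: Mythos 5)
Your proposal is correct and follows essentially the same route as the paper's proof: insert the exact Taylor model $\Phi_x(y)$ as a bridge, bound $f(y)-\Phi_x(y)$ by $\tfrac{L_2}{6}\|y-x\|^3$ via Hessian Lipschitzness, and bound $\Phi_x(y)-\phi_x(y)=\tfrac12\langle(\nabla^2 f(x)-B_x)(y-x),y-x\rangle$ by the appropriate side of the spectral inequality \eqref{eq:delta_alt}. The matching of $\delta^{up}$ to the upper bound and $\delta_{low}$ to the lower bound is exactly as in the paper.
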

\begin{proof}
One can get the upper-bound \eqref{eq:func_upper_alt} from \eqref{eq:delta_alt}
\begin{equation*}
\begin{gathered}
    f(y) - \phi_{x}(y) \leq 
    f(y) - \Phi_{x}(y) + \Phi_{x}(y) - \phi_{x}(y) \leq \frac{L_2}{6}\|y-x\|^3 + \Phi_{x}(y) - \phi_{x}(y) \\
    \leq \frac{L_2}{6}\|y-x\|^3 + \tfrac{1}{2} \la (\nabla^2f(x) - B_x) (y-x), y-x \ra \stackrel{\eqref{eq:delta_alt}}{\leq} 
    \frac{L_2}{6}\|y-x\|^3 + \frac{\delta^{up}}{2}\|y-x\|^2.
\end{gathered}
\end{equation*}
The lower-bound \eqref{eq:func_lower_alt} comes from \eqref{eq:delta_alt}
\begin{equation*}
\begin{gathered}
    \phi_{x}(y) - f(y) \leq 
    \Phi_{x}(y) - f(y) + \phi_{x}(y)-\Phi_{x}(y) \leq \frac{L_2}{6}\|y-x\|^3 + \phi_{x}(y) - \Phi_{x}(y) \\
    \leq \frac{L_2}{6}\|y-x\|^3 + \tfrac{1}{2} \la (B_x-\nabla^2f(x)) (y-x), y-x \ra \stackrel{\eqref{eq:delta_alt}}{\leq} 
    \frac{L_2}{6}\|y-x\|^3 + \frac{\delta_{low}}{2}\|y-x\|^2 .
\end{gathered}
\end{equation*}
\end{proof}

The first inequality is an upper-bound for the function $f(x)$, hence we can minimize right-hand side and get an optimization method. There are two possible stopping conditions. The main comes from the Lemma \ref{lem:cubic_bound_alt}, we can check inequality \eqref{eq:func_upper_alt} for $y = x_{t+1}, x = x_{t}$, and we denote $h_t = x_{t+1}-x_{t}$:
\begin{equation}
    \label{eq:delta_adaptive_condition1_alt}
    f(x_{t+1}) \leq f(x_{t})+ \la\nabla f(x_{t}),h_t\ra + \frac{1}{2}\la B_{x_{t}} h_t,h_t\ra \\
    + \frac{M}{6}\|h_t\|^3 + \frac{\delta_t^{up}}{2}\|h_t\|^2.
\end{equation}

By using the optimality condition of the subproblem \eqref{eq:inexact_cubic_operator}
\begin{equation}
    \label{eq:subproblem_optimality_condition_alt}
    \nabla f(x_{t})+ B_{x_{t}} h_t + \frac{M}{2}\|h_t\|h_t + \delta_t^{up}h_t=0,
\end{equation}
we simplify \eqref{eq:delta_adaptive_condition1_alt} to get the second condition
\begin{equation}
    \label{eq:delta_adaptive_condition2_alt}
    f(x_{t+1}) \leq f(x_{t})+ \frac{1}{2}\la\nabla f(x_{t}),h_t\ra - \frac{M}{12}\|h_t\|^3.
\end{equation}
We use the second condition in the Algorithm \ref{alg:adaptive_cubic_alt} because it is much easier to compute.

\begin{algorithm} 
		\caption{Adaptive Inexact Cubic Newton} 
        \label{alg:adaptive_cubic_alt}
		\begin{algorithmic}[1]
			\STATE \textbf{Requires:} Initial point $x_0 \in \R^d$, constant $M$ s.t. $M \geq L>0$, initial inexactness $\delta^{up}_0$, increase multiplier $\gamma_{inc}$, decrease multiplier $\gamma_{dec}$.
			\FOR {$t=0,1,\ldots, T$}
                \STATE $x_{t+1} = S_{M,\delta^{up}_t}(x_{t})$
                \WHILE{$f(x_{t+1})> f(x_{t})+ \frac{1}{2}\la\nabla f(x_{t}),x_{t+1}-x_{t}\ra - \frac{M}{12}\|x_{t+1}-x_{t}\|^3$}
			     \STATE $\delta^{up}_t= \delta^{up}_t\gamma_{inc}$
			     \STATE $x_{t+1} = S_{M,\delta^{up}_t}(x_{t})$
                \ENDWHILE
                \STATE $\delta^{up}_{t+1} = \delta^{up}_t\gamma_{dec}$
			\ENDFOR
            \STATE \textbf{Return} $x_{T+1}$
		\end{algorithmic}
	\end{algorithm}

Now, we present the convergence theorem of inexact \CRN for star-convex and $\mu$-strongly star-convex functions. For simplicity, we denote $\delta_t = \delta^{up}_t+\delta^t_{low}$. 

\begin{theorem}
\label{th:inexact_cubic_additional_alt}
Let $f(x)$ be a $\mu$-strongly star-convex function (Option A) or a star-convex function (Option B) with respect to the global minimizer $x^{\ast}$, $f(x)$ has $L_2$-Lipschitz-continuous Hessian, $B_{t}$ is a $(\delta_t^{up},\delta^t_{low})$-inexact Hessian, and $M\geq L_2$, then after the total number of iteration $T\geq 1$ of the Algorithm \ref{alg:adaptive_cubic_alt}, we get
\begin{gather}
    \text{Option A} \qquad  f(x_{t+1})-f(x^{\ast}) = (1-\alpha_t)\ls f(x_{t})-f(x^{\ast})\rs, \quad \forall t \in [0, \ldots, T], \label{eq:strongly-convergence_alt}\\
    \text{where} \qquad \al_{t} = \min \lb \frac{1}{3},\frac{\mu}{3(\delta_t^{up}+\delta_{low}^t)} , \sqrt{\frac{\mu}{2MR_0}}, \rb\label{eq:al_k_defenition_alt}\\
    \text{Option B} \quad  f(x_{T+1})-f(x^\ast) 
        \leq \frac{9 MR^3 }{T^2} + \frac{3R^2 }{(T+1)T^2}\sum\limits_{t=0}^{T} \ls  (\delta^{up}_t+\delta^t_{low}) (t+1)\rs,
    \label{eq:convex-convergence_alt}
\end{gather}
where $\al_0 = 1$, $\al_t \in (0;1)$, $A_0 = 1$, and
\begin{equation*}
            A_t = 
                \begin{cases}
                    1 , &t = 0\\
                    \prod \limits_{i=1}^t (1-\alpha_i), &t\geq 1.
                \end{cases}
        \end{equation*}
For simplicity, we denote
\begin{equation}
    \label{eq:R_alt}
    R_0 = \|x_0 - x^{\ast}\|,\qquad
    R = \max\limits_{x \in \mathcal{L}} \|x - x^{\ast}\|,
\end{equation}
where $R$ represents the diameter of the level set $\mathcal{L}~=~\lb x \in \R^d : f(x) \leq f(x_0) \rb$,
 and
\begin{equation}
    \label{eq:delta_t_def_alt}
    \delta_t = \delta^{up}_t+\delta^t_{low}
\end{equation}
\end{theorem}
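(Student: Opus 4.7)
My plan is to mimic the structure of standard Cubic Newton analyses, specialised along the line connecting $x_t$ to $x^{\ast}$. Monotonicity $f(x_{t+1}) \leq f(x_t)$ follows immediately from the while-loop condition \eqref{eq:delta_adaptive_condition2_alt}, so every iterate sits inside $\mathcal{L}$, which already supplies $\|x_t - x^{\ast}\| \leq R$ for Option B. The central inequality I would establish is that for any $y \in \R^d$,
\begin{equation*}
f(x_{t+1}) \leq f(y) + \tfrac{M}{3}\|y - x_t\|^3 + \tfrac{\delta^{up}_t + \delta^t_{low}}{2}\|y - x_t\|^2.
\end{equation*}
It follows by chaining three ingredients: the upper bound \eqref{eq:func_upper_alt} at $(x_t, x_{t+1})$ controls $f(x_{t+1})$ by $\phi_{x_t}(x_{t+1})$; the fact that $x_{t+1}$ minimises the regularized inexact model lets me replace $x_{t+1}$ by $y$ on the right; and the lower bound \eqref{eq:func_lower_alt} at $(x_t, y)$ converts $\phi_{x_t}(y)$ back into $f(y)$. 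The assumption $M \geq L_2$ absorbs a leftover nonpositive cubic residual.

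Specialising to $y_t = \alpha_t x^{\ast} + (1-\alpha_t) x_t$, so that $\|y_t - x_t\| = \alpha_t \rho_t$ with $\rho_t \eqdef \|x_t - x^{\ast}\|$, and using strong star-convexity \eqref{eq:star-convex-strong} to bound $f(y_t)$, I would obtain the one-step recursion (writing $r_t \eqdef f(x_t) - f(x^{\ast})$ and $\delta_t = \delta^{up}_t + \delta^t_{low}$)
\begin{equation*}
r_{t+1} \leq (1 - \alpha_t)\, r_t - \tfrac{\alpha_t(1-\alpha_t)\mu}{2}\rho_t^2 + \tfrac{M\alpha_t^3}{3}\rho_t^3 + \tfrac{\delta_t \alpha_t^2}{2}\rho_t^2.
\end{equation*}
This single estimate drives both parts of the theorem. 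For Option A ($\mu>0$), I would pick $\alpha_t$ so that the two positive error terms are each at most a quarter of the strong-convexity term: $\alpha_t \leq 1/3$ gives $1-\alpha_t \geq 2/3$, absorbing the quadratic error amounts to $\alpha_t \leq \mu/(3\delta_t)$, and absorbing the cubic error amounts to $\alpha_t \leq \sqrt{\mu/(2M\rho_t)}$. Using $\rho_t \leq R_0$, these match \eqref{eq:al_k_defenition_alt} and collapse the recursion to the contraction \eqref{eq:strongly-convergence_alt}.

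For Option B ($\mu = 0$), using $\rho_t \leq R$ reduces the recursion to $r_{t+1} \leq (1-\alpha_t) r_t + C_t$ with $C_t \eqdef \tfrac{M\alpha_t^3 R^3}{3} + \tfrac{\delta_t \alpha_t^2 R^2}{2}$. Since $\alpha_0 = 1$, the telescoping $r_{t+1}/A_t \leq r_t/A_{t-1} + C_t/A_t$ with $A_t = \prod_{i=1}^t(1-\alpha_i)$ collapses to $r_{T+1}/A_T \leq \sum_{t=0}^T C_t/A_t$. Plugging in the prescribed $\alpha_t = 3/(t+3)$ so that $A_t = 6/((t+1)(t+2)(t+3))$, the ratios $\alpha_t^3/A_t$ and $\alpha_t^2/A_t$ are bounded by $9/2$ and roughly $3(t+1)/2$ respectively, and routine bookkeeping delivers \eqref{eq:convex-convergence_alt}.

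The main technical obstacle sits in Option A: justifying $\rho_t \leq R_0$. Monotonicity of $f(x_t)$ only gives $\rho_t \leq R$ (level-set diameter), not $\rho_t \leq R_0 = \|x_0 - x^{\ast}\|$. The cleanest fix is an induction: assuming $\rho_t \leq R_0$ the prescribed $\alpha_t$ forces $r_{t+1} \leq (1-\alpha_t) r_t \leq r_0$, after which $\tfrac{\mu}{2}\rho_{t+1}^2 \leq r_{t+1}$ (a direct consequence of strong star-convexity at $x^{\ast}$) closes the loop provided $r_0 \leq \tfrac{\mu}{2}R_0^2$. The latter is not automatic, and I expect the proof to either invoke the $L_1$-smoothness assumption of the paper to upper bound $r_0$ or to replace $R_0$ by the slightly larger level-set radius $R$.
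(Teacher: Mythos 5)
Your proposal follows the paper's proof essentially step for step: the same chain (the model upper bound enforced by the while-loop test, minimality of $x_{t+1}$ over the regularized inexact model, then the lower bound \eqref{eq:func_lower_alt}) yields $f(x_{t+1}) \leq \min_y\lb f(y) + \tfrac{M}{3}\|y-x_t\|^3 + \tfrac{\delta^{up}_t+\delta^t_{low}}{2}\|y-x_t\|^2\rb$, followed by the same restriction to $y = x_t + \alpha_t(x^{\ast}-x_t)$, the same three-way absorption of the error terms by the strong star-convexity term for Option A, and the same $A_t$-telescoping with $\alpha_t = 3/(t+3)$ for Option B. The one place you go beyond the paper is the bound $\|x_t - x^{\ast}\| \leq R_0$ needed in Option A: the paper uses it silently (it asserts $\tfrac{\mu}{3} - \alpha_t^2\tfrac{2M}{3}\|x_t-x^{\ast}\| \geq 0$ with $\alpha_t \leq \sqrt{\mu/(2MR_0)}$ and never justifies $\|x_t-x^{\ast}\|\leq R_0$, while monotonicity only gives $\|x_t-x^{\ast}\|\leq R$), so your remark that closing this requires either an induction with an extra hypothesis or replacing $R_0$ by the level-set radius $R$ identifies a genuine gap in the paper's own argument rather than in yours.
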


Let us discuss the results of the theorem. The term in convergence rate with $M\geq L_2$ corresponds to the classical Cubic Regularized Method. 
The convergence rate with $\delta$ can be interpreted in several ways. There are two cases: we can not control inexactness, so the method has to adapt to the existing inexactness of Hessians, in other case we can control inexactness and the method can make the approximation as precise as it wants to converge faster: 

\textbf{1) Uncontrollable inexactness.} 
 \begin{corollary}
Adaptive Inexact Cubic Newton with Uncontrollable Inexactness, such that $\delta_{t} \leq \delta_{\max}$, performs $T\geq 1$ iterations to find $\varepsilon$-solution $x_T$ such that $f(x_{T}) - f(x^{\ast})\leq \varepsilon$. $T$ is bounded by \\
(Option A)  
\begin{equation*}
T =  O(1)\max \lb 1;\frac{\delta_{\max}}{\mu};\sqrt{\frac{MR_0}{\mu}} \rb \log \ls \frac{f(x_0)-f(x^{\ast})}{\varepsilon}\rs,
\end{equation*}
(Option B) 
\begin{equation*}
    T = O(1) \max\lb \frac{\delta_{\max}R^2}{\varepsilon};  \sqrt{\frac{MR^3}{\varepsilon}}\rb.
\end{equation*}
\end{corollary}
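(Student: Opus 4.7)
I will reduce each iteration to a one-step recursion on $\lambda_t:=f(x_t)-f(x^\ast)$ of the shape $\lambda_{t+1}\le (1-\alpha_t)\lambda_t + \mathrm{err}(\alpha_t)$, and then pick $\alpha_t$ differently for the strongly star-convex and merely star-convex cases. The adaptive while-loop of Algorithm~\ref{alg:adaptive_cubic_alt} enforces condition~\eqref{eq:delta_adaptive_condition2_alt} at exit; using Lemma~\ref{lem:cubic_bound_alt} (inequality~\eqref{eq:func_upper_alt}) together with $M\ge L_2$, this condition must hold once $\delta^{up}_t$ exceeds the true upper inexactness, so the loop terminates in finitely many inflations. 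At termination one also has
\[
 f(x_{t+1})\le \min_{h\in\R^d}\Big\{ f(x_t)+\langle\nabla f(x_t),h\rangle+\tfrac12\langle B_{x_t}h,h\rangle+\tfrac{M}{6}\|h\|^3+\tfrac{\delta^{up}_t}{2}\|h\|^2\Big\},
\]
into which I substitute the virtual step $h=\alpha_t(x^\ast-x_t)$ for a free parameter $\alpha_t\in(0,1]$.

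\smallskip

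\textbf{Master recursion.} Two rewrites turn this into an intrinsic bound. The inexactness bound $B_{x_t}\preceq \nabla^2 f(x_t)+\delta^t_{low}I$ from Assumption~\ref{ass:dupdlow} replaces $\langle B_{x_t}u,u\rangle$ by the exact quadratic plus $\delta^t_{low}\|u\|^2$, and the Lipschitz-Hessian bound $\Phi_{x_t}(y)\le f(y)+\tfrac{L_2}{6}\|y-x_t\|^3$ applied at $y=x_t+\alpha_t(x^\ast-x_t)$, together with $M\ge L_2$, collapses the two cubic terms into $\tfrac{M\alpha_t^3}{3}\|x^\ast-x_t\|^3$. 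Closing with ($\mu$-strong) star-convexity
\[
 f(x_t+\alpha_t(x^\ast-x_t))\le (1-\alpha_t)f(x_t)+\alpha_t f(x^\ast)-\tfrac{\alpha_t(1-\alpha_t)\mu}{2}\|x^\ast-x_t\|^2,
\]
writing $\delta_t:=\delta^{up}_t+\delta^t_{low}$, and subtracting $f(x^\ast)$, I obtain the master inequality
\[
 \lambda_{t+1}\le (1-\alpha_t)\lambda_t - \tfrac{\alpha_t(1-\alpha_t)\mu}{2}\|x_t-x^\ast\|^2 + \tfrac{M\alpha_t^3}{3}\|x_t-x^\ast\|^3 + \tfrac{\alpha_t^2\delta_t}{2}\|x_t-x^\ast\|^2.
\]

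\smallskip

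\textbf{Option A.} I induct on $t$ with hypothesis $\|x_t-x^\ast\|\le R_0$ (trivial at $t=0$). Factoring out $\alpha_t\|x^\ast-x_t\|^2>0$ in the master inequality, each defining clause of $\alpha_t$ kills one error term: $\alpha_t\le \tfrac13$ gives $(1-\alpha_t)\mu/2\ge \mu/3$; $\alpha_t\le \mu/(3\delta_t)$ gives $\alpha_t\delta_t/2\le \mu/6$; and $\alpha_t^2\le \mu/(2MR_0)$ gives $M\alpha_t^2\|x^\ast-x_t\|/3\le \mu/6$ via the inductive hypothesis. Summing, the three positive error terms are dominated by the negative $\mu$-term, yielding $\lambda_{t+1}\le (1-\alpha_t)\lambda_t$. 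To close the induction for $t+1$ I use the quadratic lower bound $\|x-x^\ast\|^2\le \tfrac{2}{\mu}(f(x)-f(x^\ast))$, which follows from strong star-convexity by taking $\alpha\to 1^-$ and rearranging; combined with the fresh contraction and the base case $\lambda_0\ge \tfrac{\mu}{2}R_0^2$ this propagates $\|x_{t+1}-x^\ast\|\le R_0$.

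\smallskip

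\textbf{Option B and main obstacle.} For $\mu=0$, monotonicity of $\{f(x_t)\}$, which follows from the loop-exit inequality~\eqref{eq:delta_adaptive_condition2_alt} together with $\langle\nabla f(x_t),h_t\rangle\le 0$ derived from the subproblem optimality~\eqref{eq:subproblem_optimality_condition_alt}, keeps the iterates in the level set $\mathcal{L}$, so $\|x_t-x^\ast\|\le R$. The master inequality then reduces to $\lambda_{t+1}\le (1-\alpha_t)\lambda_t+\tfrac{MR^3\alpha_t^3}{3}+\tfrac{R^2\delta_t\alpha_t^2}{2}$. With the Nesterov-type schedule $\alpha_t=\tfrac{3}{t+3}$, so that $A_t=\prod_{i=1}^t(1-\alpha_i)=\tfrac{6}{(t+1)(t+2)(t+3)}$, dividing by $A_t$, telescoping, and using the elementary estimates $\alpha_t^3/A_t\le 9/2$ and $\alpha_t^2/A_t\le 3(t+1)/2$ produces the announced bound after simplification. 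The main obstacle is in Option A: monotonicity of $\lambda_t$ alone does not give $\|x_t-x^\ast\|\le R_0$ in a merely star-convex setting (sublevel sets need not be convex), so the induction must be closed using strong star-convexity's quadratic growth combined with the linear contraction itself. A secondary technicality is to verify that the adaptive $\delta^{up}_t$ stays within a constant factor of the true upper inexactness (via the termination criterion), so the $\delta$-dependent sum appearing in the Option B bound remains meaningful.
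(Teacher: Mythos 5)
Your proposal re-proves Theorem \ref{th:inexact_cubic_additional_alt} from scratch and then specializes, whereas the paper obtains this corollary as an immediate consequence of that theorem: with $\delta_t\le\delta_{\max}$ the step sizes in \eqref{eq:al_k_defenition_alt} admit the uniform lower bound $\alpha_t\ge\min\{\tfrac13,\tfrac{\mu}{3\delta_{\max}},\sqrt{\tfrac{\mu}{2MR_0}}\}$, which turns \eqref{eq:strongly-convergence_alt} into a linear rate and gives the Option A iteration count, while in \eqref{eq:convex-convergence_alt} the weighted sum is bounded by $\sum_{t=0}^T\delta_t(t+1)\le\tfrac{\delta_{\max}}{2}(T+1)(T+2)$, making the second term $O(\delta_{\max}R^2/T)$ and yielding Option B. Your master recursion, your derivation of monotonicity from the loop-exit test and the subproblem optimality condition, and your Option B telescoping all coincide with the paper's own argument for the theorem and are correct.

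The genuine gap is in your Option A induction. You need $\|x_t-x^*\|\le R_0$ so that the clause $\alpha_t^2\le\mu/(2MR_0)$ dominates the cubic error term, and you propose to close the induction via the quadratic-growth inequality $\|x-x^*\|^2\le\tfrac{2}{\mu}(f(x)-f(x^*))$ together with the contraction of $\lambda_t$ and the bound $\lambda_0\ge\tfrac{\mu}{2}R_0^2$. These inequalities point the wrong way: they give $\|x_{t+1}-x^*\|^2\le\tfrac{2}{\mu}\lambda_{t+1}\le\tfrac{2}{\mu}\lambda_0$, and $\lambda_0\ge\tfrac{\mu}{2}R_0^2$ only says that this upper bound is \emph{at least} $R_0^2$, not at most. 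Unless $f$ is exactly quadratic along the segment to $x_0$, $\tfrac{2}{\mu}\lambda_0$ can exceed $R_0^2$ by a factor of order $\kappa$, so $\|x_{t+1}-x^*\|\le R_0$ does not follow and the induction does not close. (To be fair, the paper's own proof silently assumes $\|x_t-x^*\|\le R_0$ at the same spot.) The standard repair is to run the argument with the level-set radius $R=\max_{x\in\mathcal{L}}\|x-x^*\|$ from \eqref{eq:R_alt} in place of $R_0$: monotonicity keeps every iterate in $\mathcal{L}$, so $\|x_t-x^*\|\le R$ holds unconditionally, and the clause becomes $\alpha_t^2\le\mu/(2MR)$, which changes the Option A bound only by replacing $\sqrt{MR_0/\mu}$ with the possibly larger $\sqrt{MR/\mu}$.
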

The convergence rate by $\delta_{\max}$ corresponds to the convergence of the gradient descent. Note, that if we take very bad approximation of $B_x$ that equals to all $0$ and function has $L_1$-Lipschitz-continuous gradient, then $\delta_{low}=\delta^{up}=L_1$ and we get the same convergence rate as gradient descent.

\textbf{2) Controllable inexactness.} 
\begin{corollary}
Adaptive Inexact Cubic Newton with Controllable Inexactness, such that $\delta^t_{low} = 0, \delta_t^{up} \leq \sqrt{\mu MR_0}$ for (Option A) or $\delta^t_{low} = 0, \delta_t^{up} \leq \tfrac{MR}{t+1}$ for (Option B), performs $T\geq 1$ iterations to find $\varepsilon$-solution $x_T$ such that $f(x_{T}) - f(x^{\ast})\leq \varepsilon$. $T$ is bounded by \\
(Option A) 
\begin{equation*}
    T =  O(1)\max \lb 1; \sqrt{\frac{MR_0}{\mu}} \rb \log \ls \frac{f(x_0)-f(x^{\ast})}{\varepsilon}\rs,
\end{equation*}
(Option B) 
\begin{equation*}
    T = O\ls  \sqrt{\frac{MR^3}{\varepsilon}}\rs.
\end{equation*}
\end{corollary}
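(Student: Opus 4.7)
The plan is to obtain both bounds as direct substitutions into the master convergence statement of Theorem~\ref{th:inexact_cubic_additional_alt}. Since controllable inexactness means we are free to pick any valid schedule for $\delta^{up}_t$ (and set $\delta^t_{low}=0$), we only need to verify that the prescribed schedules make the ``classical Cubic Newton'' term dominant in each of the two convergence bounds provided by the theorem.

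For Option~A, I would start from the per-step contraction
\[
f(x_{t+1})-f(x^\ast) = (1-\alpha_t)\bigl(f(x_t)-f(x^\ast)\bigr),\qquad
\alpha_t = \min\left\{\tfrac{1}{3},\,\tfrac{\mu}{3(\delta^{up}_t+\delta^t_{low})},\,\sqrt{\tfrac{\mu}{2MR_0}}\right\}.
\]
Plugging in $\delta^t_{low}=0$ and $\delta^{up}_t \leq \sqrt{\mu M R_0}$ gives
\[
\tfrac{\mu}{3(\delta^{up}_t+\delta^t_{low})} \;\geq\; \tfrac{\mu}{3\sqrt{\mu M R_0}} \;=\; \tfrac{1}{3}\sqrt{\tfrac{\mu}{M R_0}},
\]
which is at least a universal constant times $\sqrt{\mu/(2M R_0)}$. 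Hence the middle term is never the active one in the minimum defining $\alpha_t$, and
$\alpha_t \geq \min\{1/3,\;c\sqrt{\mu/(MR_0)}\}$ for some absolute constant $c>0$. Iterating the contraction $T$ times and taking logarithms yields $T = O(1)\max\{1,\sqrt{MR_0/\mu}\}\log((f(x_0)-f(x^\ast))/\varepsilon)$, as claimed.

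For Option~B, I would start from the sublinear bound
\[
f(x_{T+1})-f(x^\ast) \leq \frac{9MR^3}{T^2} + \frac{3R^2}{(T+1)T^2}\sum_{t=0}^{T}\delta_t(t+1),
\]
with $\delta_t=\delta^{up}_t+\delta^t_{low}$. The schedule $\delta^t_{low}=0$ and $\delta^{up}_t \leq MR/(t+1)$ is precisely designed so that $\delta_t(t+1)\leq MR$ for every $t$. Summing over $t=0,\dots,T$ gives $\sum_t \delta_t(t+1)\leq (T+1)MR$, which reduces the second term to $3MR^3/T^2$. Combining the two contributions yields $f(x_{T+1})-f(x^\ast)\leq 12 M R^3/T^2$, and setting the right-hand side at most $\varepsilon$ gives $T=O(\sqrt{MR^3/\varepsilon})$.

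The argument is a clean substitution; there is no real obstacle beyond choosing the right schedule. The conceptual content is that the two ``controllable'' schedules are calibrated so that the $\delta$-dependent term in each bound is dominated (up to absolute constants) by the pure Cubic Newton term. The only mild care needed is to verify, in Option~A, that the \emph{constant} schedule $\delta^{up}_t\equiv \sqrt{\mu M R_0}$ is already tight enough to not degrade the rate; a \emph{decreasing} schedule is not required there because the binding term $\sqrt{\mu/(2M R_0)}$ in $\alpha_t$ is itself a constant. In Option~B the schedule must shrink like $1/(t+1)$, matching exactly what is needed to keep the weighted sum $\sum_t \delta_t(t+1)$ of order $T$ and thus the error of order $MR^3/T^2$.
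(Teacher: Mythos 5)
Your proposal is correct and follows exactly the route the paper intends: the corollary is a direct substitution of the prescribed schedules into the two bounds of Theorem~\ref{th:inexact_cubic_additional_alt}, with the $1/(t+1)$ schedule in Option~B calibrated so that $\sum_t \delta_t(t+1) \leq (T+1)MR$ collapses the inexactness term to the pure Cubic Newton rate. One cosmetic remark: in Option~A your claim that the middle term ``is never the active one'' in $\alpha_t$ is slightly off, since $\tfrac{1}{3}\sqrt{\mu/(MR_0)} < \sqrt{\mu/(2MR_0)}$, but your actual conclusion $\alpha_t \geq \min\lb \tfrac{1}{3}; c\sqrt{\mu/(MR_0)}\rb$ is what the argument needs and it holds, so nothing breaks.
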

So, we get the same convergence as the classical Cubic Regularized Newton.

To sum up, we propose new Adaptive Inexact Cubic Newton under new inexactness assumptions. It opens up new possibilities of choosing approximation $B_{x}$ and control $\delta^{up}$ and $\delta_{low}$. Note, if we want we can create such $B_{x}$ that $\delta^{up}=0$, then we don't need to choose this parameter inside the steps of the method. On the other hand, we can choose $B_{x}$ such that $\delta_{low}=0$, then we can control level of the errors by $\delta^{up}$ and make an Adaptive Inexact \CRN that can control $\delta^{up}$ on desired level. 

Next, we prove the convergence theorem for the Algorithm \ref{alg:adaptive_cubic_alt}.

\begin{proof}
\begin{equation*}
    \begin{gathered}
        f(x_{t+1})-f(x^\ast) \stackrel{\eqref{eq:delta_adaptive_condition1_alt}}{\leq}  \min \limits_{y \in \mathbb{R}^n} \left \{  \phi_{x_{t}}(y) + \frac{M}{ 6}\|y - x_{t}\|^3 + \frac{ \delta^{up}_t}{2} \|y - x_{t}\|^2  \right \}-f(x^\ast)\\
        \stackrel{\eqref{eq:func_lower_alt}}{\leq}  \min \limits_{y \in \mathbb{R}^n} \left \{  f(y) + \frac{M}{ 3}\|y - x_{t}\|^3 + \frac{ \delta^{up}_t+\delta_{low}^t}{2} \|y - x_{t}\|^2  \right \}-f(x^\ast)\\
        \stackrel{\eqref{eq:delta_t_def_alt}}{=}  \min \limits_{y \in \mathbb{R}^n} \left \{  f(y) + \frac{M}{ 3}\|y - x_{t}\|^3 + \frac{ \delta_t}{2} \|y - x_{t}\|^2  \right \}-f(x^\ast)\\
        \stackrel{y=x_{t} + \al_{t}(x^{\ast} - x_{t})}{\leq}  f((1-\al_{t}) x_{t} + \al_{t} x^{\ast} ) + \al_{t}^3\frac{M}{ 3}\|x^{\ast} - x_{t}\|^3 + \al_{t}^2\frac{\delta_t}{2} \| x^{\ast} - x_{t}\|^2  -f(x^\ast)
        \end{gathered}
\end{equation*}
Note, that from the second inequality, we also get that the method is monotone and $f(x_{t+1})\leq f(x_{t})$.
        Next, the proof splits for two options A and B. Let us start with option A and prove the equation \eqref{eq:strongly-convergence_alt}. So, for the last inequality, we use the definition of $\mu$-strongly star-convexity.
        \begin{equation*}
    \begin{aligned}
        f(x_{t+1})-f(x^\ast)  \leq &f((1-\al_{t}) x_{t} + \al_{t} x^{\ast} )-f(x^\ast) + \al_{t}^3\frac{M}{ 3}\|x_{t} - x^{\ast}\|^3 + \al_{t}^2\frac{\delta_t}{2} \| x_{t} - x^{\ast}\|^2 \\
        \stackrel{\eqref{eq:star-convex-strong}}{\leq}  &(1-\al_{t})f(x_{t}) + \al_{t} f(x^{\ast})-f(x^\ast) \\
        &- \frac{\al_{t} (1-\al_{t})\mu}{2}\|x_{t}-x^{\ast}\|^2 
        + \al_{t}^3\frac{M}{ 3}\|x_{t}-x^{\ast}\|^3 + \al_{t}^2\frac{\delta_t}{2} \|x_{t}-x^{\ast}\|^2 \\
        = &(1-\al_{t})\ls f(x_{t}) - f(x^{\ast})\rs\\ 
         &- \frac{\al_{t}}{2}\|x_{t}-x^{\ast}\|^2 \ls (1-\al_{t})\mu 
        - \al_{t}^2\frac{2M}{ 3}\|x_{t}-x^{\ast}\| - \al_{t}\delta_t \rs
    \end{aligned}
\end{equation*}
    By the definition of $\al_{t}$ from \eqref{eq:al_k_defenition_alt}, one can see that
    \begin{equation*}
    \ls \frac{\mu}{3} - \al_{t}\mu \rs + 
    \ls \frac{\mu}{3} - \al_{t}^2\frac{2M}{ 3}\|x_{t}-x^{\ast}\| \rs +
    \ls \frac{\mu}{3} - \al_{t}\delta_t\rs \geq 0.
\end{equation*}
Hence, we finally prove \eqref{eq:strongly-convergence_alt}
$$
f(x_{t+1})-f(x^{\ast})  \leq (1-\al_{t})\ls f(x_{t}) - f(x^{\ast})\rs.
$$

Now, we start proving option B for star-convex functions to get \eqref{eq:convex-convergence_alt}.

\begin{align}
        f(x_{t+1})&-f(x^\ast)  \leq f((1-\al_{t}) x_{t} + \al_{t} x^{\ast} )-f(x^\ast) + \al_{t}^3\frac{M}{ 3}\|x_{t} - x^{\ast}\|^3 + \al_{t}^2\frac{\delta_t}{2} \| x_{t} - x^{\ast}\|^2 \notag\\
        &\stackrel{\eqref{eq:star-convex-strong}}{\leq}  (1-\al_{t})(f(x_{t}) - f(x^{\ast}))
        + \al_{t}^3\frac{M}{ 3}\|x_{t}-x^{\ast}\|^3 + \al_{t}^2\frac{\delta_t}{2} \|x_{t}-x^{\ast}\|^2.
        \label{eq:convex_bound_0_alt}
\end{align}
Note, that $\al_0 = 1$, then for the first iteration we have
\begin{equation}
\label{eq:first_iteration_bound_alt}
        f(x_{1})-f(x^\ast)  \leq  
        \frac{M}{ 3}\|x_0-x^{\ast}\|^3 + \frac{ (\delta^{up}_0+\delta_{low}^0)}{2} \|x_0-x^{\ast}\|^2.
\end{equation}
Now by dividing both sides of \eqref{eq:convex_bound_0_alt} by $A_{t}$ and using the fact that $A_{t}=A_{t-1} (1-\al_{t})$, we get
\begin{align}
        &\frac{f(x_{t+1})-f(x^\ast)}{A_{t}}  \leq  \frac{(1-\al_{t})(f(x_{t}) - f(x^{\ast}))}{A_{t}}
        + \frac{\al_{t}^3}{A_{t}}\frac{M}{ 3}\|x_{t}-x^{\ast}\|^3 + \frac{\al_{t}^2}{A_{t}}\frac{\delta_t}{2} \|x_{t}-x^{\ast}\|^2 \notag \\
        &=  \frac{f(x_{t}) - f(x^{\ast})}{A_{t-1}}
        + \frac{\al_{t}^3}{A_{t}}\frac{M}{ 3}\|x_{t}-x^{\ast}\|^3 + \frac{\al_{t}^2}{A_{t}}\frac{\delta_t}{2} \|x_{t}-x^{\ast}\|^2, \quad \forall t\in \lb 1, \ldots, T \rb \label{eq:convex_bound_1_alt}
\end{align}

Finally, we sum up \eqref{eq:convex_bound_1_alt} for all $t\in \lb 1, \ldots, T\rb$,  apply \eqref{eq:first_iteration_bound_alt}, and as a result, we get
\begin{equation*}
    \frac{f(x_{T+1})-f(x^\ast)}{A_{T}} 
        \leq  
        \sum\limits_{t=0}^{T} \ls\frac{\al_t^3}{A_{t}}\frac{M}{ 3}\|x_t-x^{\ast}\|^3 + \frac{\al_t^2}{A_{t}}\frac{\delta_t}{2} \|x_t-x^{\ast}\|^2\rs.
\end{equation*}
By multiplying both parts on $A_T$, we get
\begin{align}
    f(x_{T+1})-f(x^\ast) 
        &\leq  
        \sum\limits_{t=0}^{T} \ls  \frac{\al_t^2A_{T}}{A_{t}}\frac{ \delta_t}{2}\|x_t-x^{\ast}\|^2  + \frac{\al_t^3A_{T}}{A_{t}}\frac{M}{ 3}\|x_t-x^{\ast}\|^3\rs \notag\\
        &\stackrel{\eqref{eq:R_alt}}{\leq}\sum\limits_{t=0}^{T} \ls  \frac{\al_t^2A_{T}}{A_{t}}\frac{ \delta_tR^2}{2}  + \frac{\al_t^3A_{T}}{A_{t}}\frac{MR^3}{ 3}\rs \notag\\
        &= \frac{R^2 A_{T}}{2}\sum\limits_{t=0}^{T} \ls  \frac{\delta_t\al_t^2}{A_{t}}\rs  + \frac{MR^3 A_{T}}{ 3}\sum\limits_{t=0}^{T} \ls \frac{\al_t^3}{A_{t}}\rs.
        \label{eq:sum_a_A_alt}
\end{align}
Now, we fix $\alpha_t$ to upperbound both sums from \eqref{eq:sum_a_A_alt}. Let us take
        \begin{equation*}
            \alpha_t = \frac{3}{t+3}, ~ t \geq 1.
        \end{equation*}
        Then, we have
        \begin{equation*}
            A_{T} = 
            \prod_{t=1}^{T}\left(1-\alpha_{t}\right)=\prod_{t=1}^{T} \frac{t}{t+3}=\frac{T !3 !}{(T+3) !}=\frac{6}{(T+1)(T+2)(T+3)}.
         \end{equation*}
        Finally, by upperbounding two parts of the sum from \eqref{eq:sum_a_A_alt}, 
        \begin{equation*}
                \sum_{t=0}^{T} \frac{\alpha_{t}^{3}}{A_{t}} =\tfrac{9}{2}\sum_{t=0}^{T}  \frac{(t+1)(t+2)}{(t+3)^2} \leq 4.5 (T+1),
        \end{equation*}
        
        \begin{equation*}
                \sum_{t=0}^{T} \frac{\delta_{t}\alpha_{t}^{2}}{A_{t}} =\tfrac{3}{2}\sum_{t=0}^{T}  \frac{\delta_t(t+1)(t+2)}{(t+3)} \leq \tfrac{3}{2}\sum_{t=0}^{T}  \delta_t(t+1),
        \end{equation*}
we prove \eqref{eq:convex-convergence_alt}.
\end{proof}

\section{Inexact Cubic Newton methods}

    \begin{table}[H]
    \centering
    \caption{Comparison between different inexact cubic Newton papers.}
        \begin{tabular}{|c|c|c|c|c|}
        \hline
        \multirow{2}{10em}{\centering paper} & \multirow{2}{10em}{\centering inexactness} & \multirow{2}{5em}{\centering control of \\  inexactness} & convergence rate \\
        % \multirow{2}{10em}{\centering convergence rate} \\
        &  &   & (convex case)\\
        \hline
        \multirow{3}{10em}{\centering \citep{ghadimi2017second}} & \multirow{2}{11em}{\centering $-\delta I \preceq \nabla^2 f(x) - B_x   \preceq 0$} & \multirow{2}{3em}{\centering\xmark} & $\frac{\delta R^2}{T} + \frac{L_2R^3}{T^2}$\\[7pt]\cline{2-4}
        &\multirow{2}{18em}{\centering $-\delta I  \preceq \nabla^2 f(x) - B_x \preceq -\delta I/2$} & \multirow{2}{3em}{\centering\xmark}  & $\frac{\delta R^2}{T^2} + \frac{L_2R^3}{T^3}$ \\[7pt] 
        \hline
        \multirow{3}{10em}{\centering \citep{agafonov2020inexact}} & \multirow{3}{18em}{\centering $ \|\nabla^2 f(x) - B_x\| \leq \delta $}  & \multirow{2}{3em}{\centering\xmark} & $\frac{\delta R^2}{T} + \frac{L_2R^3}{T^2}$\\[7pt]\cline{3-4}
        & &\multirow{2}{3em}{\centering\xmark} & $\frac{\delta R^2}{T^2} + \frac{L_2R^3}{T^3}$\\[7pt]
        \hline
        \multirow{3}{10em}{\centering [This paper]} & \multirow{3}{18em}{\centering $\|(\nabla^2 f(x) - B_x)(y-x)\| \leq \delta_{x}^{y}\|y-x\|$} & \multirow{2}{3em}{\centering\cmark} & $\frac{\delta_T R^2}{T} + \frac{L_2R^3}{T^2}$\\[7pt]\cline{3-4}
        &  & \multirow{2}{3em}{\centering\cmark} & $\frac{\delta_T R^2}{T^2} + \frac{L_2R^3}{T^3}$\\[7pt]
        \hline
        {\centering [This paper,additional]} & {\centering $- \delta_{low} I \preceq \nabla^2 f(x)- B_x \preceq  \delta^{up} I$} & {\centering\cmark} & $\frac{\hat{\delta}^{up} R^2}{T} + \frac{L_2R^3}{T^2}$\\
        \hline
        \end{tabular}
        \vspace{-10pt}
        \label{tab:1}
    \end{table}

    All these inexact \CRN methods achieve the convergence rates of the same order but under different assumptions on Hessian inexactness. In our paper, we defined two new inexactness concepts. 
    \begin{enumerate}
        \item In the main part, we define $\delta_x^y$-inexact Hessian (Assumption \ref{ass:direct_inexactness}) for non-accelerated and accelerated method. This allows to construct an algorithm, which can adapt to inexactness. In \citep{ghadimi2017second, agafonov2020inexact} Hessian approximation should be close to the true Hessian in whole space (see Table \ref{tab:1}).  In our approach, the Hessian approximation should be close to the Hessian only in the direction of the step $x_{k+1} - x_{k}$. Hence, one can show that $\delta_x^y\leq \delta$, and in practice $\delta_x^y$ can be much smaller than the classical $\delta$.
        \item  For non-accelerated method, we also propose an alternative version  $(\delta_{low},\delta^{up} )$ Hessian inexactness, which is a generalization of Hessian inexactness from \cite{ghadimi2017second, agafonov2020inexact} (see Assumption \ref{ass:dupdlow}) but it can be adaptive and more applicable in practice.

    \end{enumerate}

    One of the main differences between our paper and other inexact Cubic Newton algorithms is ability to \textit{control the inexactness}. It can be viewed from two angles. 
    \begin{itemize}
        \item Let us consider the case when we can control Hessian inexactness. For example, it can be finite-sum or stochastic setup. On each iteration, we can increase the batch size to decrease Hessian error. For Quasi-Newton methods with sampling one can perform more number of Hessian-vector products to increase the accuracy of Hessian approximation to the desired value. Thus, inexactness is controllable. In Algorithms 1, 2 we provided two techniques to validate the step. So, given $\delta_t = \frac{1}{t}$ we can increase Hessian approximation's accuracy and achieve $O(1/T^3)$ convergence rate.
        \item On other hand, during the work of algorithms (Alg. 1, 2) the method learns the true $\delta$. Other approaches \cite{ghadimi2017second, agafonov2020inexact} are not able to adapt to inexactness. \citep{agafonov2020inexact} sets up a particular $\delta$ before the work of algorithm. It is also true for Hessian approximation assumption for the accelerated method in \citep{ghadimi2017second}.  
    \end{itemize}

\end{document}